\documentclass[12pt,twoside,reqno]{amsart}
\usepackage[all]{xy}
\usepackage{amssymb,latexsym,amsthm,amsmath,mathtools}
\usepackage{enumitem,color}

\usepackage{tabularx}

\usepackage[colorlinks=true, linkcolor=black, citecolor=blue]{hyperref}
\usepackage[capitalize,nameinlink]{cleveref}
\usepackage{cleveref}
\usepackage[
backend=biber,
style=numeric-comp,
doi=true,
url=false,
maxbibnames=99,
giveninits=true
]{biblatex}
\crefname{section}{Section}{Sections}
\crefname{subsection}{Subsection}{Subsections}
\crefname{subsubsection}{Subsubsection}{Subsubsections}
\usepackage{parskip}
\usepackage{mathtools}

\usepackage[backend=biber, doi=true, url=false, maxbibnames=99, giveninits=true]{biblatex}
\usepackage{orcidlink}
\long\def\symbolfootnote[#1]#2{\begingroup%
	\usepackage{hyperref}
	\usepackage{tikz}
	\usepackage{blkarray}
	\usepackage{arydshln}
	\usepackage{tabularx}
	\usepackage{listings}
	\usepackage{xcolor}
	
	\lstdefinestyle{sage}{
		language=Python,
		basicstyle=\ttfamily\small,
		breaklines=true,
		frame=single,
		columns=fullflexible,
		showstringspaces=false
	}
	\setlength\dashlinedash{0.5pt}
	\setlength\dashlinegap{1.5pt}
	\setlength\arrayrulewidth{0.3pt}
	\def\thefootnote{\fnsymbol{footnote}}\footnote[#1]{#2}\endgroup}

\newcommand{\GL}{\textup{GL}}

\makeatletter
\def\imod#1{\allowbreak\mkern10mu({\operator@font mod}\,\,#1)}
\makeatother

\makeatletter
\renewcommand\subsubsection{\@startsection{subsubsection}{3}{\z@}%
	{-3.25ex\@plus -1ex \@minus -.2ex}%
	{1.5ex \@plus .2ex}%
	{\normalfont\normalsize\bfseries}}
\makeatother
\newtheorem{theorem}{Theorem}[section]
\newtheorem{lemma}[theorem]{Lemma}
\newtheorem{corollary}[theorem]{Corollary}
\newtheorem{proposition}[theorem]{Proposition}
\newtheorem*{theorem*}{Theorem}
\theoremstyle{definition}
\newtheorem{definition}[theorem]{Definition}
\newtheorem{remark}[theorem]{Remark}
\newtheorem{example}[theorem]{Example}
\newtheorem{question}[theorem]{Question}

\numberwithin{equation}{section}

\newcommand{\ignore}[1]{}

\newcommand{\mynote}[1]{}
\DeclareFieldFormat*{title}{#1}
\begin{document} 
    \author[de Mello T. C.]{Thiago Castilho de Mello\,\orcidlink{0000-0002-6988-0910}}
	\address{Universidade Federal de S\~ao Paulo, Av. Cesare Monsuetto Giulio Lattes 1201, S\~ao Jos\'e dos Campos, Brazil}
	\email{tcmello@unifesp.br}
    
    \author[Saini P.]{Prachi Saini\,\orcidlink{0009-0006-7627-1538}}
	\address{Indian Institute of Science Education and Research Mohali, Sector 81, Mohali 140306, India}
	\email{prachi2608saini@gmail.com}

	\subjclass[2020]{Primary: 17D05, 17A75; Secondary: 16R50, 16S50, 16K20}
	\keywords{Octonion algebras, split octonions, composition algebras, images of polynomials, generalized polynomials, commutators}
	\setcounter{section}{0}
	\title{Generalized commutator on octonion algebra}
	
	\begin{abstract}
		Let $\mathcal O$ be an octonion algebra over a field $F$ of characteristic different from $2$, and let $A,B\in\mathcal O$, not both zero. We study the image of the generalized commutator $\Phi(x,y)=A(xy)-B(yx)$ on $\mathcal O$. We prove that $\Phi$ is surjective if and only if either one of $A$ and $B$ is invertible and $A\neq B$, or $A$ and $B$ are nonzero zero divisors and $A-B$ is invertible. In the remaining cases, the image of $\Phi$ is a hyperplane or a four-dimensional maximal totally isotropic subspace of $\mathcal O$, except when $A$ and $B$ are linearly independent zero divisors and $A-B$ is not invertible, in which case the image spans a six-dimensional subspace. Along the way, we show that every octonion of trace zero is a commutator. We also prove that the image of $Axy-Byx$ on any quaternion algebra over $F$ is a vector subspace, extending a result of Panja S., Saini P. and Singh A. (Images of polynomial maps with constants, Mathematika 71 (2025), no. 3, Paper No. e70031) for $2\times 2$ matrices over algebraically closed fields, and we give examples of other generalized polynomials whose images are not vector subspaces.
	\end{abstract}
	\maketitle
	\raggedbottom
	\section{Introduction}
	\label{sec:intro}
	
	Commutators play a central role in the study of the structure of an algebra. The well-known Albert-Muckenhoupt-Shoda Theorem (see \cite{Shoda} for fields of characteristic zero and \cite{MR83961} for arbitrary fields) states that, for a field $F$, a matrix $a\in M_n(F)$ is a commutator if and only if $a$ has trace zero, i.e., $a\in\mathfrak{sl}_n(F)$. In other words, the image of the polynomial $xy-yx$ evaluated on $M_n(F)$ is the vector space $\mathfrak{sl}_n(F)$. An analogous result holds for octonion algebras over a field of characteristic different from $2$: an element of an octonion algebra $\mathcal O$ is a commutator if and only if it has trace zero (Lemma~\ref{lem:trace-zero-commutator}). This is the particular case $A=B=1$ of our main result (Theorem~\ref{thm:main}).
	
	These facts are related to problems concerning images of polynomials on algebras. Given an $F$-algebra $R$ and a polynomial $f(x_1,\dots,x_m)$ in noncommuting variables over $F$, one obtains naturally a map $R^m\longrightarrow R$ by evaluation. Polynomials that vanish under the evaluation are the polynomial identities of $R$, and this is related to the well-developed theory of polynomial identities in algebras (PI-theory). One of the main problems in the study of images of polynomials is the well-known L'vov-Kaplansky conjecture, which asks whether the image of a multilinear polynomial evaluated on the $n\times n$ matrix algebra $M_n(K)$ is always a vector space, which is equivalent to saying that such image is one of the following: $\{0\}$, $K I_n$, $\mathfrak{sl}_n(K)$ or $M_n(K)$. For further results concerning images of polynomials, we refer the reader to the survey article \cite{KBMRY2020}.
	
	Although simple to state, this conjecture has only a few known cases of positive solutions. Kanel-Belov, Malev and Rowen solved it for $2\times2$ matrices over quadratically closed fields \cite{KBMR2012}, and Malev for $2\times2$ matrices over the real numbers \cite{Malev2014}. For $3\times3$ matrices, partial results were obtained in \cite{KBMR2016}. Analogous problems for other algebras have also been studied. For instance, the corresponding conjecture for finite-dimensional simple algebras was solved for the algebra of quaternions by Malev \cite{Malev2021}, and for the algebra of octonions, over a class of fields which includes the quadratically closed fields and $\mathbb R$, by Kanel-Belov, Malev, Pines and Rowen \cite{KBMPR2024}. In the latter case, the image of a multilinear polynomial is $\{0\}$, $F$, the space of pure octonions, or the whole algebra.
	
	Other related questions have been considered for maps defined by generalized polynomials, also called polynomials with constants, that is, polynomials whose coefficients are taken in the algebra itself. Panja, Saini and Singh \cite{PanjaSainiSingh2025} considered two families of such maps on $M_2(K)$, where $K$ is an algebraically closed field: the generalized power maps $Ax^{k_1}+By^{k_2}$, for which they characterized surjectivity, and the generalized commutator $Axy-Byx$, whose image they proved to be always a vector space. In \cite{Rodrigues2025}, Rodrigues considered non-zero generalized polynomials on $M_2(D)$, where $D$ is a Makar-Limanov algebraically closed division ring, and proved that every element of $M_2(D)$ is a sum of at most two elements of the image of such a polynomial.
	Later, Panja, Saini and Singh \cite{PanjaSainiSingh2026} considered the split octonion algebra over an algebraically closed field, and characterized, in terms of the orbits of the group $G_2$ acting on pairs of octonions, the pairs $(A_1,A_2)$ for which the map $A_1(x^{k_1})+A_2(y^{k_2})$ is surjective.
	
	In the present paper, we consider the generalized commutator on octonion algebras. Let $\mathcal O$ be an octonion algebra over a field $F$ with $\operatorname{char}F\neq2$, with norm $N$ and associated bilinear form $N(x,y)=N(x+y)-N(x)-N(y)$. We do not assume that $F$ is algebraically closed, nor that $\mathcal O$ is split. Our main result is the following.
	
	\begin{theorem}
		\label{thm:main}
		Let $\mathcal O$ be an octonion algebra over a field $F$ with $\operatorname{char}F\neq2$, and let $A,B\in\mathcal O$ with $A\neq0$. Consider the map
		$$
		\Phi:\mathcal O\times\mathcal O\longrightarrow\mathcal O,
		\qquad
		\Phi(x,y)=A(xy)-B(yx).
		$$
		Then $\Phi$ is surjective if and only if one of the following conditions holds:
		\begin{enumerate}[label=(\alph*)]
			\item at least one of $A$ and $B$ is invertible, and $A\neq B$;
			\item $A$ and $B$ are nonzero zero divisors and $N(A,B)\neq0$.
		\end{enumerate}
		In the remaining cases, the following statements hold:
		\begin{enumerate}[label=(\roman*)]
			\item if $A=B$ is invertible, then $\operatorname{Im}(\Phi)=A^\perp=\{z\in\mathcal O\,|\,N(A,z)=0\}$;
			\item if $A$ is a nonzero zero divisor and $B\in FA$, then $\operatorname{Im}(\Phi)=A\mathcal O$, which is a four-dimensional maximal totally isotropic subspace of $\mathcal O$;
			\item if $A$ and $B$ are linearly independent nonzero zero divisors with $N(A,B)=0$, then the linear span of $\operatorname{Im}(\Phi)$ has dimension $6$.
		\end{enumerate}
	\end{theorem}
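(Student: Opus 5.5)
The plan is to study $\Phi$ through its partial linear maps. For fixed $x$, the map $y\mapsto \Phi(x,y)=A(xy)-B(yx)$ is $F$-linear, so $\operatorname{Im}(\Phi)$ is the union over $x$ of the subspaces $T_x(\mathcal O)$, where $T_x=L_AL_x-L_BR_x$. Surjectivity therefore reduces to the following: for each target $z$, exhibit some $x$ such that $z\in T_x(\mathcal O)$. Throughout I will use the standard composition-algebra identities: the Moufang identities, $\bar u(uv)=N(u)v$, $N(uv)=N(u)N(v)$, $N(uv,w)=N(v,\bar u w)$, and the fact that for a nonzero zero divisor $u$ the spaces $u\mathcal O$ and $\mathcal O u$ are $4$-dimensional totally isotropic, with $u\mathcal O=\ker L_{\bar u}$.

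\textbf{Cases (i) and (ii).} These come first, as containments plus reverse inclusions.

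For (i), write $\Phi(x,y)=A[x,y]$. The commutator $[x,y]$ always has trace zero. Conversely, every trace-zero element is a commutator; this is Lemma~\ref{lem:trace-zero-commutator}, which I would prove separately by choosing $x$ with $1,x,z$ spanning a quaternion subalgebra and applying the quaternion/matrix result. Hence $\operatorname{Im}\Phi=A\cdot 1^\perp$. Since $N(Au,A)=N(A)N(u,1)$, this equals $A^\perp$.

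For (ii), take $B=\lambda A$. Then $\Phi(x,y)=A(xy-\lambda yx)$, so $\operatorname{Im}\Phi\subseteq A\mathcal O$. For the reverse inclusion, note that $\{xy-\lambda yx\}$ covers all of $\mathcal O$ when $\lambda\ne1$ (take $y=1$, giving $(1-\lambda)x$), and covers $1^\perp$ when $\lambda=1$. In the latter case one also needs $A\cdot 1^\perp=A\mathcal O$. This holds because the kernel of $L_A$ is $\bar A\mathcal O$, which is not contained in $1^\perp$ (it contains $\bar A\,\bar A^{\,\prime}$-type elements of nonzero trace). Hence $L_A(1^\perp)$ has dimension $4$.

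\textbf{Surjectivity in case (a).} Suppose $A$ is invertible and $A\neq B$. Choosing $x=1$ gives $\Phi(1,y)=(A-B)y$, which is surjective whenever $A-B$ is invertible. If $A-B$ is a zero divisor, I would instead take $x$ invertible and scalar-perturbed, $x=1+t u$, and argue that $\det T_x$ is a nonzero polynomial in the parameters. Being polynomial, it is nonvanishing for some choice provided $F$ is large enough; for small finite fields a direct construction is needed. The case where $B$ is invertible is symmetric, via $\Phi(x,y)=-(B(yx)-A(xy))$ with the roles of $x,y$ swapped.

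\textbf{Surjectivity in case (b).} Here I would use a split basis in which $A$ is a primitive idempotent-type element, say $A=e_1$. Zorn vector matrices make $T_x$ explicit, and one can then show that for suitable $x$, $T_x$ is invertible exactly when $N(A,B)\ne0$.

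\textbf{Non-surjectivity in the remaining cases.} For the converse, I need to show that in the remaining cases $\operatorname{Im}\Phi$ lies in a proper subspace. Case (iii) is part of this. There $N(A,B)=0$, so $A,B$ span a totally isotropic plane, and I expect $\operatorname{Im}\Phi\subseteq A\mathcal O+B\mathcal O$, whose dimension computes to $6$ (two $4$-dimensional totally isotropic subspaces meeting in a $2$-space). Exhibiting enough explicit values then gives the full span. Finally, the case with $A$ a zero divisor and $B=0$ is covered by (ii) with $\lambda=0$.

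\textbf{Main obstacle.} I expect the hardest step to be showing surjectivity when $\det T_x$ vanishes for every single $x$ but the union of the images $T_x(\mathcal O)$ is nevertheless everything. This can happen in case (a) when $A-B$ is a zero divisor, and in case (b). There I would handle each target $z$ by picking $x$ depending on $z$, using the transitivity of the automorphism group $G_2$ to normalize the pair $(A,B)$ first.
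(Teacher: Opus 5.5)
Your treatment of (i) and (ii) matches the paper. There is, however, a genuine gap at the heart of the theorem: case (a) with $A$ invertible, $A\neq B$ and $N(A-B)=0$. This case occurs only in the split algebra and is the entire content of Section~\ref{sec:invertible-case}.

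\textbf{The gap in case (a).} You only assert that $\det T_x$ is a nonzero polynomial in $x$, and nothing in the proposal shows this. Your ``main obstacle'' paragraph even allows that it might vanish identically. The finite-field case is deferred to an unspecified ``direct construction'', and a nonzero polynomial can in any case vanish at every point of a finite field. Your fallback (normalize $(A,B)$ under $\operatorname{Aut}_F(\mathcal O)$, then choose $x$ depending on $z$) essentially restates the problem: normalizing the pair amounts to classifying the subalgebra $\langle A,B\rangle$, which you do not do.

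In fact an $x_0$ with $T_{x_0}$ bijective always exists, but proving this is where the work lies. The paper uses that $T=\langle A,B\rangle$ is unital, associative and of dimension at most $4$, and runs through the Knarr--Stroppel classification of such subalgebras up to automorphisms.
\begin{itemize}
\item \emph{Quaternion case.} Suppose $T\subseteq\mathcal H$ for a quaternion subalgebra $\mathcal H$, split or not. Write $\mathcal O=\mathcal H\oplus\mathcal H\omega$ and take $x_0\in\mathcal H$. Then $T_{x_0}$ decouples into $y_0\mapsto A(x_0y_0-Cy_0x_0)$ and $y_1\mapsto y_1(x_0-\overline{x_0}D)A$, with $C=A^{-1}B$ and $D=BA^{-1}$. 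One then needs a single $x_0$ making both pieces invertible.
\item In $M_2(F)$ this is done by explicit choices of $x_0$ according to the eigenvalues of $C$. These choices work over every field of characteristic $\neq2$, finite ones included.
\item For division $\mathcal H$, where $F$ is necessarily infinite, it is done by extending scalars to a splitting field (Lemma~\ref{lem:quaternion nonvanishing}).
\item \emph{Non-quaternion types.} The types $\mathcal C+(Fp_0+Fn_0)\omega$, $F+(Fp_0+Fn_0)\omega$ and $F+(n_0\mathcal O\cap\mathcal On_0)$ need explicit $8\times8$ determinant computations.
\end{itemize}
Your proposal contains neither this argument nor a substitute for it.

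\textbf{Case (b).} You list this as a main obstacle, but it is immediate. If $N(A)=N(B)=0$ then $N(A-B)=-N(A,B)\neq0$, so already $T_1=L_{A-B}$ is bijective. The Zorn-matrix plan is unnecessary. Moreover, the normalization $A=e_1$ is unavailable when $Tr(A)=0$, since then $A^2=0$.

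\textbf{Case (iii).} The inclusion $\operatorname{Im}\Phi\subseteq A\mathcal O+B\mathcal O$ is immediate. The equality $\dim(A\mathcal O+B\mathcal O)=6$ rests on $\dim(A\mathcal O\cap B\mathcal O)=2$, which you assert without justification; the paper cites Knarr--Stroppel for it. The lower bound, that the span is not smaller, is left as ``exhibiting enough explicit values''. The paper obtains it from an adjoint computation using $N(uv,w)=N(v,\overline{u}w)$:
\begin{itemize}
\item $z\perp\operatorname{Im}\Phi$ if and only if $\overline{x}(\overline{A}z)=(\overline{B}z)\overline{x}$ for all $x$.
\item Taking $x=1$ and then arbitrary $x$ forces $\overline{A}z=\overline{B}z\in F1$.
\item This scalar must be $0$, since its norm is $N(A)N(z)=0$.
\end{itemize}
Hence $(\operatorname{span}\operatorname{Im}\Phi)^\perp=\ker L_{\overline{A}}\cap\ker L_{\overline{B}}=A\mathcal O\cap B\mathcal O$.
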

	
	Notice that, since a division octonion algebra has no nonzero zero divisors, cases (b), (ii) and (iii) occur only when $\mathcal O$ is split. As a consequence, the image of $\Phi$ is a vector subspace of $\mathcal O$, except possibly in case (iii), in which we only describe its linear span.
	
	The paper is organized as follows. In Section~\ref{sec:preliminary-cases} we treat some preliminary cases, which include the case $A=B$ and the description of the image of the left multiplication by a zero divisor. In Section~\ref{sec:invertible-case} we consider the case in which one of the coefficients is invertible. Here we use the classification of the subalgebras of an octonion algebra, due to Knarr and Stroppel \cite{MR4812310}. In Section~\ref{sec:zero-divisors} we consider the case in which both coefficients are zero divisors, and we prove Theorem~\ref{thm:main}. In Section~\ref{sec:quaternion} we consider the analogous problem for quaternion algebras, and we show that the image of $Axy-Byx$ on a quaternion algebra over $F$ is always a vector subspace (Theorem~\ref{thm:quaternion}). This extends \cite[Theorem~B]{PanjaSainiSingh2025} to arbitrary fields of characteristic different from $2$ and to division quaternion algebras. Finally, in Section~\ref{sec:concluding} we relate Lemma~\ref{lem:trace-zero-commutator} to known results on commutators in central simple algebras, we consider other generalized polynomials of degree $2$, and we state some questions.
	
	\section{Preliminaries}
	In this section, we recall some basic notions and properties of octonion algebras that will be used throughout the paper. Unless otherwise stated, $F$ denotes a field of characteristic $p\neq 2$. The material in this section is standard and can be found in \cite[Chapter~1]{MR1763974}.
	
	Let $\mathcal{C}$ be a unital algebra over $F$.
	\begin{definition}
		A \emph{composition algebra} is a pair $(\mathcal{C},N)$, where $\mathcal{C}$ is a unital algebra over $F$ and $N:\mathcal{C}\to F$ is a non-degenerate quadratic form satisfying
		$$N(xy)=N(x)N(y)$$
		for all $x,y\in\mathcal{C}$. The quadratic form $N$ is called the \emph{norm} of $\mathcal{C}$.
	\end{definition}
	The \emph{canonical involution}, or \emph{conjugation}, of $\mathcal{C}$ is defined by
	$$
	\overline{\phantom{x}}:\mathcal{C}\longrightarrow\mathcal{C},
	\qquad
	x\longmapsto \overline{x} = N(x,1)1-x,
	$$
	where $N(\cdot,\cdot)$ denotes the bilinear form associated with the norm.
	
	For a composition algebra $\mathcal{C}$, the \emph{trace} of an element $x\in\mathcal{C}$ is defined by
	$$
	Tr(x)=x+\overline{x}.
	$$
	In particular, for a quaternion algebra $Q$, viewed as a central simple algebra of degree $2$, the trace coincides with the reduced trace (see \cite{MR1286824}), that is,
	$$\operatorname{Trd}(q)=Tr(q)=q+\overline{q}$$
	for all $q\in Q$.

	We will use the following standard identities (see, for example \cite{MR1763974}).
	\begin{lemma}
		\label{lem: identities}
		For all $a,b,c \in\mathcal{C}$, the following identities hold:
		\begin{enumerate}
			\item $a\overline{a}=\overline{a}a=N(a)1$.
			\item $\overline{\overline{a}}=a$.
			\item The element $a$ is invertible if and only if $N(a)\neq 0$. In this case, $a^{-1}=N(a)^{-1}\overline{a}$.
			\item $a(\overline{a}b)=N(a)b$. \label{eqn: identity 4}
			\item $ a(ab)=a^2b,\qquad (ab)b=ab^2,\qquad (ab)a=a(ba)$.
			\item $a^2-Tr(a)a+N(a)1 = 0$
			\item $N(ca,b) = N(a,\overline{c}b)$ and $N(ac,b) = N(a,b\overline{c})$, 
		\end{enumerate}
		The identities in $(5)$ are referred to as the \emph{alternative laws}. The identity $(6)$ is referred to as the \emph{Cayley-Hamilton identity}.
	\end{lemma}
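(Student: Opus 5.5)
We prove the identities in the order listed, taking as the only inputs the multiplicativity $N(xy)=N(x)N(y)$, non-degeneracy of $N$, and the definition $\overline{x}=N(x,1)1-x$.

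\textbf{Step 1: the basic linear identity.} Linearizing $N(xy)=N(x)N(y)$ in $y$ gives $N(xy,xz)=N(x)N(y,z)$. Linearizing once more in $x$ gives
$$N(xy,wz)+N(wy,xz)=N(x,w)N(y,z).$$
Put $w=1$ and replace $x$ by $c$:
$$N(cy,z)+N(y,cz)=N(c,1)N(y,z).$$
Since $\overline{c}=N(c,1)1-c$, this rearranges to $N(cy,z)=N(y,\overline{c}z)$, which is the first half of (7). The right-multiplication version $N(yc,z)=N(y,z\overline{c})$ follows from the symmetric linearization.

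\textbf{Step 2: identities (4), (1), (3) and (2).} For (4), apply (7) twice to get, for all $z$,
$$N(a(\overline{a}b),z)=N(\overline{a}b,\overline{a}z)=N(\overline{a})N(b,z).$$
Note that $N(\overline{a})=N(a)$: expanding $N(N(a,1)1-a)$ and using $N(1)=1$ gives
$$N(a,1)^2-N(a,1)^2+N(a)=N(a).$$
Non-degeneracy then yields $a(\overline{a}b)=N(a)b$. Taking $b=1$ gives $a\overline{a}=N(a)1$, and since $a\mapsto\overline{a}$ is $1\mapsto 1$ and the map fixes $F1$, the symmetric argument gives $\overline{a}a=N(a)1$; this is (1). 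Identity (2) is immediate because $N(\overline{a},1)=N(a,1)$. For (3): if $N(a)\ne0$, then (1) shows $N(a)^{-1}\overline{a}$ is a two-sided inverse. Conversely, if $ab=1$ then $N(a)N(b)=1$, so $N(a)\ne0$.

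\textbf{Step 3: identities (6) and (5).} For (6), substitute $\overline{a}=Tr(a)1-a$ into $a\overline{a}=N(a)1$; here $Tr(a)$ is read as the scalar $N(a,1)$. This gives $a^2-Tr(a)a+N(a)1=0$. For the alternative laws, substitute $\overline{a}=Tr(a)1-a$ into (4):
$$Tr(a)ab-a(ab)=N(a)b.$$
Combined with (6) multiplied on the right by $b$, this gives $a(ab)=a^2b$. The right law $(ab)b=ab^2$ is symmetric. Flexibility $(ab)a=a(ba)$ follows by linearizing the left law, $(a,a,b)=0$, and the right law, $(a,b,b)=0$, for the associator $(x,y,z)=(xy)z-x(yz)$. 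Together these make the associator alternating, so $(a,b,a)=-(a,a,b)=0$.

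\textbf{Expected obstacle.} The only delicate point is Step 1: the double linearization of multiplicativity and the correct placement of the conjugate. Everything else is formal manipulation using non-degeneracy and $\operatorname{char}F\ne2$. Since these identities are standard (as the paper itself notes, see \cite{MR1763974}), in the paper one may simply cite them.
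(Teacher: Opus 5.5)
Your derivation is correct. The paper gives no proof of its own and simply cites \cite{MR1763974}, and your route is essentially the standard argument in Chapter~1 of that reference: linearize $N(xy)=N(x)N(y)$ to get the adjoint relation (7), deduce $a(\overline a b)=N(a)b$ by nondegeneracy, and read off the remaining identities.

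Two inputs are left unstated. First, $N(1)=1$: it follows from $N(1)=N(1)^2$ together with nondegeneracy, since $N(1)=0$ would force $N\equiv0$. Second, $\overline a a=a\overline a$: this is immediate because $\overline a\in F1+Fa$, and that one-line observation can replace your rather vague ``symmetric argument'' for the second half of (1).
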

	The following construction realizes every octonion algebra as a
	Cayley-Dickson double of a quaternion algebra; see
	\cite[Proposition~1.5.1]{MR1763974}.
	\begin{proposition}
		\label{prop: Cayley-Dickson}
		Let $(\mathcal C, N)$ be a composition algebra over a field $F$. Suppose $\mathcal C_0 \subseteq \mathcal C$ is a composition proper subalgebra and let $\omega \in \mathcal C_0^\perp$ be an element such that $N(\omega) \neq 0$.
		Then the space $$\mathcal C_1=\mathcal C_0\oplus \mathcal C_0 \omega$$ forms a composition subalgebra of $\mathcal C$ with $\mathrm{dim}(\mathcal C_1)=2\cdot \mathrm{dim}(\mathcal C_0)$. Moreover, for $a,a',b,b'\in \mathcal C_0$, the operations on $\mathcal C_1$ are given by:
		\begin{align*}
			\text{Multiplication:}\quad & (a+b\omega)(a'+b'\omega)=\big(aa'+ \delta\,\overline{b'}\, b\big)+\big( b'a+b\overline{a'}\big) \omega,\\
			\text{Norm:}\quad           & N(a+b\omega)=N(a)-\delta\,N(b),\\
			\text{Conjugation:}\quad    & \overline{a+b\omega}=\overline{a}-b\omega,
		\end{align*}
		where $\overline{\cdot}$ denotes the canonical involution on $\mathcal C_0$ and $\delta = -\,N(\omega)$.
	\end{proposition}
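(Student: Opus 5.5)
The plan is to verify, in this order, that $\mathcal C_0\omega$ is orthogonal to $\mathcal C_0$, that the norm on $\mathcal C_1$ has the stated form, that $\mathcal C_1$ is closed under multiplication with the stated product, and finally the conjugation formula. Since $\mathcal C_1\subseteq\mathcal C$, multiplicativity of the norm is inherited from $\mathcal C$; the only thing to show for ``composition'' is non-degeneracy of $N|_{\mathcal C_1}$. Throughout I use Lemma~\ref{lem: identities} together with its standard linearizations, in particular
$$x\overline y+y\overline x=N(x,y)1 \qquad\text{and}\qquad x(\overline y z)+y(\overline x z)=N(x,y)z .$$

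\textbf{Orthogonality, norm and dimension.} For $a,b\in\mathcal C_0$, Lemma~\ref{lem: identities}(7) gives
$$N(a,b\omega)=N(\overline b a,\omega)=0,$$
because $\overline b a\in\mathcal C_0$. Hence $\mathcal C_0\perp\mathcal C_0\omega$, and
$$N(a+b\omega)=N(a)+N(b)N(\omega)=N(a)-\delta N(b).$$
Since $1\in\mathcal C_0$, we have $Tr(\omega)=N(\omega,1)=0$. So $\overline\omega=-\omega$ and, by Cayley--Hamilton, $\omega^2=-N(\omega)1$. The alternative law then gives
$$(b\omega)\omega=b\omega^2=-N(\omega)b,$$
so right multiplication by $\omega$ is injective on $\mathcal C_0$, and $\dim\mathcal C_0\omega=\dim\mathcal C_0$. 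Since $\mathcal C_0$ is a composition subalgebra, $N|_{\mathcal C_0}$ is non-degenerate. As $N(b\omega)=N(\omega)N(b)$ with $N(\omega)\neq0$, the form on $\mathcal C_0\omega$ is non-degenerate too. Orthogonality of the two pieces then forces $\mathcal C_0\cap\mathcal C_0\omega=0$, the sum is direct, $\dim\mathcal C_1=2\dim\mathcal C_0$, and $N|_{\mathcal C_1}$ is non-degenerate. The conjugation formula follows from $\overline{x}=N(x,1)1-x$ and $N(b\omega,1)=0$.

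\textbf{Multiplication.} Expanding $(a+b\omega)(a'+b'\omega)$ bilinearly, it suffices to prove four identities for $a,b\in\mathcal C_0$:
$$\omega a=\overline a\,\omega,\qquad a(b\omega)=(ba)\omega,\qquad (a\omega)b=(a\overline b)\omega,\qquad (a\omega)(b\omega)=\delta\,\overline b a .$$
The first comes from $x\overline y+y\overline x=N(x,y)1$ with $x=\omega$, $y=\overline a$. This gives $\omega a+\overline a\,\overline\omega=N(\omega,\overline a)1=0$, hence $\omega a=\overline a\omega$.

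The other three are the main obstacle, since $\mathcal C$ is not associative. I would first try to prove each one by pairing both sides with an arbitrary $z\in\mathcal C$ and using Lemma~\ref{lem: identities}(7) to move factors across. Non-degeneracy of $N$ on $\mathcal C$ then reduces each identity to a statement that follows from the linearized identity $x(\overline y z)+y(\overline x z)=N(x,y)z$ and from $\omega\perp\mathcal C_0$. For example, for the second identity apply that linearization with $x=\overline a$ and $y=\omega$; this handles the case $Tr(a)=0$, and the general case follows by adding scalars. If this becomes messy, the fallback is the Moufang identities, which hold in any alternative algebra. With all four identities in hand, $\mathcal C_1$ is closed under multiplication with exactly the stated product, which completes the proof.
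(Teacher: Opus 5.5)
Your outline is correct and is essentially the standard argument for this result, which the paper does not prove but quotes from \cite[Proposition~1.5.1]{MR1763974}: orthogonality $\mathcal C_0\perp\mathcal C_0\omega$ via Lemma~\ref{lem: identities}(7), the norm formula and non-degeneracy, $\overline\omega=-\omega$, and the reduction of the product rule to your four identities, obtained from the linearized composition identities. The third and fourth identities, which you leave at the level of a plan, go through quickly: since $\overline{xy}=\overline y\,\overline x$ and every element of $\mathcal C_0\omega$ has trace zero, your second identity gives $\overline{(a\omega)b}=-\overline b(a\omega)=-(a\overline b)\omega=\overline{(a\overline b)\omega}$, and the middle Moufang identity together with $a\omega=\omega\overline a$ gives $(a\omega)(b\omega)=\omega(\overline a b)\omega=\bigl((\overline b a)\omega\bigr)\omega=\delta\,\overline b a$.
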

	Throughout the remainder of the paper, let $\mathcal{O}$ denote an octonion algebra over a field $F$ with $\operatorname{char}\neq 2$. By the Cayley-Dickson construction, every octonion algebra admits a decomposition
	$$\mathcal{O}=\mathcal H\oplus \mathcal H\omega,$$
	where $\mathcal H$ is a quaternion subalgebra and $N(\omega)=-\delta\in F^\times$. Moreover, every quaternion subalgebra $\mathcal H$ of $\mathcal O$ appears in such a decomposition: since $N$ is nondegenerate on $\mathcal H$, it is also nondegenerate on $\mathcal H^\perp$, and it is enough to take any $\omega\in\mathcal H^\perp$ with $N(\omega)\neq0$ in Proposition~\ref{prop: Cayley-Dickson}.
	
	When the quadratic form $N$ is isotropic (i.e., when there exists $0\neq x\in \mathcal O$ such that $N(x) = 0$), $\mathcal O$ is called the split octonion algebra. It is not a division algebra and has nonzero zero divisors. We write $\mathcal O_0 = 1^\perp = \{x\in \mathcal O\,| \, Tr(x) = 0\}$.
	
	When $\mathcal O$ is split, we realize it for the rest of the paper as the Cayley--Dickson double of the split quaternion algebra $\mathcal Q:=M_2(F)$: writing $\overline X:=Tr(X)I-X$ for the classical adjugate on $\mathcal Q$ (which agrees with the composition-algebra conjugate of $\mathcal Q$, viewed as a $4$-dimensional composition algebra with norm $\det$), \cref{prop: Cayley-Dickson} gives
	$$
	\mathcal O=\mathcal Q+\mathcal Q\omega,\qquad \omega^2=1,
	$$
	with multiplication, norm, trace, and conjugation
	\begin{align}
		(a+x\omega)(b+y\omega)&=\bigl(ab+\overline yx\bigr)+\bigl(ya+x\overline b\bigr)\omega, \label{eq:doubling-mult}\\
		N(a+x\omega)&=\det(a)-\det(x), \qquad Tr(a+x\omega)=\operatorname{tr}(a),\\
		\overline{a+x\omega}&=\overline a-x\omega,
	\end{align}
	for $a,b,x,y\in \mathcal Q$. We fix the basis $\{E_{11},E_{12},E_{21},E_{22}\}$ of matrix units for $\mathcal Q$, and abbreviate, as is standard in this context (see \cite{MR4812310}),
	$$
	p_0:=E_{11},\qquad n_0:=E_{12}.
	$$
	Since any two split octonion algebras over $F$ are isomorphic (an octonion algebra is determined, up to isomorphism, by its norm form, and an isotropic norm form of an octonion algebra is hyperbolic, see \cite[Chapter~1]{MR1763974}), this is no loss of generality.
	
	We stress that this model fixes one particular quaternion subalgebra $\mathcal Q=M_2(F)$ of $\mathcal O$, which is split. The algebra $\mathcal O$ may also contain quaternion subalgebras which are division algebras. Indeed, if $\mathcal H$ is any quaternion algebra over $F$, then the Cayley--Dickson double of $\mathcal H$ with $\delta=1$ is an octonion algebra with norm $N(a+b\omega)=N(a)-N(b)$, which is isotropic (take $a=b=1$); hence it is the split octonion algebra, and $\mathcal H$ is isomorphic to a quaternion subalgebra of $\mathcal O$. For this reason, throughout the paper $\mathcal Q$ always denotes the fixed subalgebra $M_2(F)$ of the model above, while an arbitrary quaternion subalgebra of $\mathcal O$, split or not, is denoted by $\mathcal H$.
	
	The elements $p_0$ and $n_0$ are nonzero zero divisors of $\mathcal O$ (indeed of $\mathcal Q$), with $N(p_0)=N(n_0)=0$, $Tr(p_0)=1$, $Tr(n_0)=0$, and $\overline{p_0}=E_{22}$. We record two elementary but recurrent identities, immediate from \eqref{eq:doubling-mult}:
	\begin{equation}
		n_0E_{21}=p_0,\qquad n_0\overline{p_0}=n_0. \label{eq:n0-identities}
	\end{equation}

	\section{Some preliminary cases of the map \texorpdfstring{$\Phi$}{Phi}}
	\label{sec:preliminary-cases}

	Throughout this section, we consider the map $\Phi = \Phi_{A,B}:\mathcal O\times\mathcal O\longrightarrow\mathcal O$ defined by $\Phi(x,y)=A(xy)-B(yx)$,
	where $A, B\in\mathcal O$ and $A$ is a non-zero element. We first settle some preliminary cases in which the image of $\Phi$ can be directly determined.
	
	We first consider the case when $A-B$ is invertible.
	\begin{lemma}
		\label{lem: N(A-B) nonzero}
		Let $A,B\in\mathcal O$ such that $N(A-B)\neq0$. Then the map $\Phi(x,y)=A(xy)-B(yx)$ is surjective on $\mathcal O$.
	\end{lemma}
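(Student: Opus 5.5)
Given $z\in\mathcal O$, the plan is to hit $z$ with a pair in which one variable is the identity. Taking $y=1$ gives
$$\Phi(x,1)=A x - B x=(A-B)x,$$
because $x\cdot 1=1\cdot x=x$. So the restriction $x\mapsto\Phi(x,1)$ is just left multiplication by $C:=A-B$, and it suffices to show that this map is onto $\mathcal O$.

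By Lemma~\ref{lem: identities}(3), the hypothesis $N(C)\neq0$ means $C$ is invertible, with $C^{-1}=N(C)^{-1}\overline{C}$. For a given $z$, set
$$x:=N(C)^{-1}\,\overline{C}z.$$
Lemma~\ref{lem: identities}(4) then gives
$$Cx=N(C)^{-1}\,C(\overline{C}z)=N(C)^{-1}N(C)z=z,$$
hence $\Phi(x,1)=z$. Since $z$ was arbitrary, $\Phi$ is surjective.

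The only point needing care is nonassociativity: one cannot simply write $C(C^{-1}z)=(CC^{-1})z$. Identity (4), a consequence of the alternative laws, is exactly what justifies this step, so there is no real obstacle here.
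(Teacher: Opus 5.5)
Your proposal is correct and follows the paper's proof: set $y=1$ to reduce $\Phi(\cdot,1)$ to left multiplication by $A-B$, then solve $(A-B)x=z$ with $x=N(A-B)^{-1}\,\overline{(A-B)}\,z$. Citing Lemma~\ref{lem: identities}(4) to avoid reassociating $C(C^{-1}z)$ is also what the paper does.
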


	\begin{proof}
		Let $Z\in\mathcal O$ be arbitrary. Taking $y=1_{\mathcal O}$, we have $\Phi(x,1_{\mathcal O})=A(x)-B(x)=(A-B)x$. Since $N(A-B)\neq0$, the element $A-B$ is invertible. Hence, taking
		$x=(A-B)^{-1}Z$ and using equation~(\ref{eqn: identity 4}), we obtain
		$\Phi(x,1_{\mathcal O})=Z$. Thus, every $Z\in\mathcal{O}$ lies in the image of $\Phi$, and consequently $\Phi$ is surjective.
	\end{proof}
	
	If $A\in \mathcal O$, we denote by $A\mathcal O$ the image of the left multiplication map by $A$ on $\mathcal O$, $L_A:\mathcal O \longrightarrow \mathcal O$, given by $L_A(x) = Ax$.
	
	Even if $A-B$ is not invertible, it gives us a useful bound on the image of  $\Phi$:
	\begin{lemma}
		\label{lem:sandwich}
		For all $A,B\in\mathcal O$,
		$$
		(A-B)\mathcal O\ \subseteq\ \operatorname{Im}(\Phi)\ \subseteq\ A\mathcal O+B\mathcal O.
		$$
	\end{lemma}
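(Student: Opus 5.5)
The two inclusions are independent, and both follow almost immediately from the definition of $\Phi$ together with the computation already used in Lemma~\ref{lem: N(A-B) nonzero}.

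For the left inclusion, I specialize $y=1_{\mathcal O}$, exactly as in the proof of Lemma~\ref{lem: N(A-B) nonzero}. Then
$$\Phi(x,1_{\mathcal O})=A(x1)-B(1x)=(A-B)x.$$
As $x$ ranges over $\mathcal O$, this gives every element of $(A-B)\mathcal O=L_{A-B}(\mathcal O)$, so $(A-B)\mathcal O\subseteq\operatorname{Im}(\Phi)$. This step uses no invertibility hypothesis; it only relies on $1_{\mathcal O}$ being a two-sided identity.

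For the right inclusion, I take an arbitrary image element $\Phi(x,y)=A(xy)-B(yx)$. The first term is $L_A(xy)$, so it lies in $A\mathcal O$. The second term is $B(yx)=L_B(yx)$, so it lies in $B\mathcal O$. Since $A\mathcal O$ and $B\mathcal O$ are images of the linear maps $L_A$ and $L_B$, they are vector subspaces. Hence their sum $A\mathcal O+B\mathcal O$ is a subspace, and it contains the difference of an element of $A\mathcal O$ and an element of $B\mathcal O$. Therefore $\operatorname{Im}(\Phi)\subseteq A\mathcal O+B\mathcal O$.

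There is no real obstacle here. The only point to state explicitly is that $A\mathcal O$ and $B\mathcal O$ are closed under subtraction, which holds because left multiplication is $F$-linear (bilinearity of the product in $\mathcal O$). Non-associativity plays no role: every product is taken with the bracketing exactly as written in $\Phi$.
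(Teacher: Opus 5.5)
Your proof is correct and follows the paper's argument exactly. The right inclusion holds because $A(xy)\in A\mathcal O$ and $B(yx)\in B\mathcal O$, and the left inclusion follows by setting $y=1_{\mathcal O}$, which gives $\Phi(x,1_{\mathcal O})=(A-B)x$.
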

	\begin{proof}
		For the right inclusion, $\Phi(x,y)=A(xy)-B(yx)\in A\mathcal O+B\mathcal O$ for all $x,y$, since $xy,yx\in\mathcal O$. For the left inclusion, take $y=1_{\mathcal O}$: $\Phi(x,1_{\mathcal O})=A(x)-B(x)=(A-B)x$, so $(A-B)\mathcal O\subseteq\operatorname{Im}(\Phi)$.
	\end{proof}

	Now we have the case in which $B = 0$.

	\begin{lemma}
		\label{lem: B is zero}
		Let $A\in\mathcal{O}$ be nonzero. Consider the map
		$\Phi:\mathcal{O}\times\mathcal{O}\longrightarrow\mathcal{O}$ defined by $\Phi(x,y)=A(xy)$.
		Then $\operatorname{Im}(\Phi)=A\mathcal{O}$.
		In particular, if $A$ is invertible, then $\Phi$ is surjective.
	\end{lemma}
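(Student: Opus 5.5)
The statement is essentially immediate, and I would prove it by the two inclusions, mirroring the proof of Lemma~\ref{lem:sandwich} with $B=0$.

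\emph{Inclusion $\operatorname{Im}(\Phi)\subseteq A\mathcal O$.} For any $x,y\in\mathcal O$ the product $xy$ lies in $\mathcal O$. Hence $\Phi(x,y)=A(xy)=L_A(xy)\in A\mathcal O$. This is also the right-hand inclusion of Lemma~\ref{lem:sandwich} with $B=0$, since $0\cdot\mathcal O=0$.

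\emph{Inclusion $A\mathcal O\subseteq\operatorname{Im}(\Phi)$.} Given $z=Ax\in A\mathcal O$, I would take $y=1_{\mathcal O}$. Then $\Phi(x,1_{\mathcal O})=A(x1_{\mathcal O})=Ax=z$. Equivalently, this is the left inclusion of Lemma~\ref{lem:sandwich}, because $(A-B)\mathcal O=A\mathcal O$ when $B=0$. Combining the two inclusions gives $\operatorname{Im}(\Phi)=A\mathcal O$.

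\emph{The invertible case.} If $A$ is invertible, then $N(A)\neq0$ by Lemma~\ref{lem: identities}(3). So $N(A-0)\neq0$, and Lemma~\ref{lem: N(A-B) nonzero} gives surjectivity directly. Alternatively, for any $Z$ I would set $x=N(A)^{-1}\overline{A}Z$; identity~(4) of Lemma~\ref{lem: identities} then gives $Ax=N(A)^{-1}A(\overline A Z)=Z$, so $A\mathcal O=\mathcal O$.

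There is no real obstacle here. The only care needed is to avoid using associativity when writing $A(\overline{A}Z)$, and the identity $a(\overline{a}b)=N(a)b$, which holds in any composition algebra, handles exactly that point.
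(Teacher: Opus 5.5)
Your proof is correct and is essentially the paper's argument. The paper notes that $z=z1_{\mathcal O}$, so the products $xy$ already exhaust $\mathcal O$ and the image is $A\mathcal O$; this is the same observation as your choice $y=1_{\mathcal O}$. For the invertible case the paper simply says that $L_A$ is a bijection, and your use of identity (4) (or of Lemma~\ref{lem: N(A-B) nonzero} with $B=0$) just makes that explicit.
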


	\begin{proof}
		Since $1_{\mathcal{O}}$ is the identity element of $\mathcal{O}$, every $z\in\mathcal{O}$ can be written as $z=z1_{\mathcal{O}}$. Hence
		$$\{xy:x,y\in\mathcal{O}\}=\mathcal{O}.$$
		Therefore,
		$$\operatorname{Im}(\Phi)=\{A(xy):x,y\in\mathcal{O}\}=\{Az:z\in\mathcal{O}\}=A\mathcal{O}.$$
		If $A$ is invertible, then left multiplication by $A$ is a bijection of $\mathcal{O}$. Thus $A\mathcal{O}=\mathcal{O}$,
		and consequently $\Phi$ is surjective.
	\end{proof}
	
	Based on the last result, a natural question is: which subspaces of $\mathcal O$ can be realized as $A\mathcal O$, for some $A\in \mathcal O$? If $A$ is zero or invertible, the answer is clear. The problem is when $A$ is a nonzero zero divisor of $\mathcal O$, i.e. $N(A) = 0$. In this case $\mathcal O$ is the split octonion algebra, and the answer is known: van der Blij and Springer proved that the maximal totally isotropic subspaces of $\mathcal O$ are exactly the subspaces of the form $A\mathcal O$ and $\mathcal O A$, with $A$ a nonzero zero divisor \cite[Theorem~3]{MR123622} (see also \cite[Theorem~3.6]{MR4812310}), and Gow proved that $\dim A\mathcal O = 4$ \cite[Lemma~1]{Gow2008}. In the next result we collect these facts, together with some consequences that will be used later. Let us denote by $L_A$ the left multiplication map by $A$ on $\mathcal O$.

	\begin{proposition}\label{Image_of_L_A}
		Let $A\in \mathcal O$ be a nonzero zero divisor, i.e., $A\neq 0$ and $N(A) = 0$. Then
		\begin{enumerate}
			\item $\mathrm{Im}(L_A)$ is a totally isotropic subspace of $\mathcal O$, i.e., $N(x,y) = 0$ for any $x,y\in \mathrm{Im}(L_A)$.
			\item $L_A$ satisfies $(L_A)^2 = Tr(A)L_A$
			\item $\dim \mathrm{Im}L_A = 4$.
			\item If $Tr(A) = 0$, then $\mathrm{Im}(L_A) = \ker(L_A)$
			\item If $Tr(A) \neq 0$, then $\mathcal O = \mathrm{Im}(L_A)\oplus \ker(L_A)$, with both subspaces being maximal isotropic.
		\end{enumerate}
	\end{proposition}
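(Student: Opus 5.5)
Each part follows from the identities in Lemma~\ref{lem: identities}, using $N(A)=0$ throughout.

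\emph{Part (1).} For $x,y\in\mathcal O$, apply (7) and then (4):
$$N(Ax,Ay)=N(x,\overline{A}(Ay)).$$
Since $\overline A=Tr(A)1-A$, identity (4) applied to $\overline A$ (whose conjugate is $A$) gives $\overline A(Ay)=N(A)y=0$. Hence $N(Ax,Ay)=0$. In particular $N(Ax)=\tfrac12N(Ax,Ax)=0$, so $\mathrm{Im}(L_A)$ is totally isotropic.

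\emph{Part (2).} By the alternative law (5), $A(Ax)=A^2x$. By the Cayley--Hamilton identity (6) with $N(A)=0$, $A^2=Tr(A)A$. Therefore $L_A^2=Tr(A)L_A$.

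\emph{Part (3).} This is Gow's lemma \cite[Lemma~1]{Gow2008}; alternatively it follows from van der Blij--Springer \cite[Theorem~3]{MR123622}. For a self-contained argument:
\begin{itemize}
\item Since $N$ is nondegenerate of dimension $8$, a totally isotropic subspace has dimension at most $4$. So $\dim\mathrm{Im}(L_A)\le4$.
\item Likewise $\dim \mathrm{Im}(L_{\overline A})\le 4$, as $N(\overline A)=0$.
\item The identity $\overline A(Ax)=0$ from Part (1) gives $\mathrm{Im}(L_A)\subseteq\ker L_{\overline A}$. Rank–nullity for $L_{\overline A}$ then yields
$$\dim\mathrm{Im}L_A+\dim\mathrm{Im}L_{\overline A}\le 8.$$
This is consistent with the bounds but does not by itself force equality.
\end{itemize}
To get equality, I would use $G_2$-type homogeneity or reduce to a normal form: every nonzero isotropic element is conjugate under automorphisms to a multiple of $p_0$ or $n_0$ (modulo scalars and adding $F1$ appropriately). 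An easier route is a direct computation in the model $\mathcal Q+\mathcal Q\omega$ after writing $A=a+x\omega$. This is the step I expect to be the main obstacle, so I would simply cite Gow as the paper intends.

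\emph{Part (4).} If $Tr(A)=0$, then $\overline A=-A$, so $\mathrm{Im}L_A\subseteq\ker L_{\overline A}=\ker L_A$. By Part (3), $\dim\ker L_A=8-4=4=\dim\mathrm{Im}L_A$, so the two subspaces are equal.

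\emph{Part (5).} Let $t=Tr(A)\neq0$. By Part (2), $P=t^{-1}L_A$ is an idempotent, so $\mathcal O=\mathrm{Im}P\oplus\ker P=\mathrm{Im}L_A\oplus\ker L_A$.
\begin{itemize}
\item $\mathrm{Im}L_A$ is totally isotropic of dimension $4$ by Parts (1) and (3), hence maximal.
\item For the kernel: $Ax=0$ gives $\overline A(Ax)=0$, and I would show $\ker L_A=\mathrm{Im}L_{\overline A}$. We already have $\mathrm{Im}L_{\overline A}\subseteq\ker L_A$, by Part (1) applied to $\overline A$, which gives $A(\overline Ay)=N(\overline A)y=0$. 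Both spaces have dimension $4$ by Part (3) applied to $\overline A$, which is a nonzero zero divisor since $N(\overline A)=N(A)=0$.
\item Hence $\ker L_A=\mathrm{Im}L_{\overline A}$ is maximal totally isotropic by Part (1) for $\overline A$.
\end{itemize}
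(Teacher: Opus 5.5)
Your proposal is correct and is essentially the paper's proof: (2) and (4) match the paper up to trivial rephrasing, and (3) is cited from Gow exactly as the paper does. In (5) you use the same idempotent $t^{-1}L_A$ and the same identification $\ker L_A=\mathrm{Im}\,L_{\overline A}$ (the paper's $\mathrm{Im}\,L_{1-f}$, since $1-f=\overline f$); the only differences are cosmetic (isotropy via $N(ca,b)=N(a,\overline c b)$ and $a(\overline a b)=N(a)b$ instead of multiplicativity of $N$, and maximality from the bound $\dim\le 4$ instead of citing van der Blij--Springer), and if you want (3) self-contained, the missing lower bound is easy: the right alternative law and Cayley--Hamilton give $(Ax)\overline x=N(x)A$, so every $x\in\ker L_A$ has $N(x)=0$, hence $\ker L_A$ is totally isotropic, $\dim\ker L_A\le 4$, and therefore $\dim\mathrm{Im}\,L_A\ge 4$.
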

	
	\begin{proof}
		Items (1) and (3) are known: by \cite[Theorem~3]{MR123622} (see also \cite[Theorem~3.6(b)]{MR4812310}), $\mathrm{Im}(L_A) = A\mathcal O$ is a maximal totally isotropic subspace of $\mathcal O$, and $\dim \mathrm{Im}(L_A) = 4$ by \cite[Lemma~1]{Gow2008}. Item (1) also follows directly from the multiplicativity of the norm: for $x\in \mathcal O$, $N(Ax) = N(A)N(x) = 0$, and since $N(x,y) = N(x+y)-N(x)-N(y)$, we obtain that $N(x,y) = 0$ for any $x,y\in \mathrm{Im}L_A$.
		
		(2) Since $N(A) = 0$, the Cayley-Hamilton identity gives $A^2 = Tr(A)A$. Multiplying by $x$ and using the alternative laws, we obtain $A(Ax) = Tr(A)(Ax)$, for any $x\in \mathcal O$, which means $L_A^2 = Tr(A) L_A$.
		
		(4) If $Tr(A) = 0$, $L_A^2 = 0$, then $Im(L_A)\subseteq \ker(L_A)$. By (3), $\dim \mathrm{Im}(L_A) = 4$ and  by rank–nullity, $8 = \dim \mathcal O = \dim \mathrm{Im}(L_A) + \dim \ker(L_A)$, and we obtain $\dim \ker(L_A) = 4$ and this implies equality.

		(5) Now we assume $Tr(A) \neq 0$. Then $f = A/Tr(A)$ satisfy $Tr(f) = 1$. Also, $f$ is an idempotent of $\mathcal O$, that is $f^2 = f$ and using the alternative laws, $L_f^2 = L_f$. As a consequence, $\mathcal O = \mathrm{Im}(L_f)\oplus \ker(L_f) = \mathrm{Im}(L_A)\oplus \ker(L_A)$. Notice that $\overline{f} = Tr(f)1-f = 1-f$ and $N(1-f) = N(1) + N(f) - N(1,f)$. Now it follows from the facts that $N(f) =0$, $N(1) = 1$ and $N(1,f) = Tr(f) = 1$ that $N(1-f) = 0$, that is, $1-f$ is a nonzero zero divisor. By item (4) of Lemma \ref{lem: identities}, $f((1-f)y) = f(\overline{f}y) = N(f)y = 0$ for all $y\in \mathcal O$, so $\mathrm{Im}(L_{1-f})\subseteq \ker(L_f)$. By (3) and rank–nullity, both subspaces have dimension $4$, hence $\ker(L_A) = \ker(L_f) = \mathrm{Im}(L_{1-f})$ (see also \cite[Theorem~3.6(a)]{MR4812310}). Finally, by \cite[Theorem~3]{MR123622}, both $\mathrm{Im}(L_A)$ and $\ker(L_A) = \mathrm{Im}(L_{1-f})$ are maximal totally isotropic subspaces of $\mathcal O$.
	\end{proof}
	
	We recall the following result from \cite[Corollary 0.9]{MR1286824}.
	\begin{lemma}\label{lem:amitsurrowencommutator}
		Let $R$ be a central simple algebra of degree $2$ over a field $F$ with $\operatorname{char}F\neq2$. Then every element of $R$ having reduced trace zero is a commutator.
	\end{lemma}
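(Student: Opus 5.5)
The plan is to use that a central simple algebra of degree $2$ is a quaternion algebra, with reduced trace equal to $Tr$ (as recalled in the Preliminaries), and then build the commutator explicitly from an element anticommuting with $q$. Fix $q\in R$ with $Tr(q)=0$. If $q=0$, then $q=[1,1]$, so assume $q\neq 0$. Let $R_0=1^\perp$ be the pure quaternions. Since $Tr(q)=0$, we have $\overline q=-q$, and the Cayley--Hamilton identity gives $q^2=-N(q)$. For $u,v\in R_0$, polarizing $u^2=-N(u)$ gives
$$uv+vu=-N(u,v).$$
In particular, every $y\in R_0$ with $N(q,y)=0$ anticommutes with $q$. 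The proof then splits according to whether $N(q)\neq0$ or $N(q)=0$.

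\emph{Case $N(q)\neq 0$.} Let $W=q^\perp\cap R_0$, a $2$-dimensional subspace. Because $N$ is nondegenerate on $R_0$ and $q$ is anisotropic, $R_0=Fq\oplus W$ is an orthogonal sum, so $N|_W$ is nondegenerate. Since $\operatorname{char}F\neq2$, a nonzero nondegenerate quadratic form takes a nonzero value, so there is $y\in W$ with $N(y)\neq0$. This $y$ is invertible and satisfies $qy=-yq$, hence $yqy^{-1}=-q$. Set $x=\tfrac12\,q\,y^{-1}$. Using associativity of $R$,
$$xy=\tfrac12 q,\qquad yx=\tfrac12\, yqy^{-1}=-\tfrac12 q,$$
so $xy-yx=q$.

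\emph{Case $N(q)=0$, $q\neq0$.} Here $R$ is not a division algebra, so $R\cong M_2(F)$ by Wedderburn. The element $q$ is a nonzero matrix with $q^2=0$, so it has rank one and is similar to $E_{12}$. Since $E_{12}=E_{11}E_{12}-E_{12}E_{11}$, conjugating this identity back shows that $q$ is a commutator.

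Alternatively, this case can be handled without matrices. Choose $p\in R_0$ with $N(q,p)=-1$, which exists by nondegeneracy. Then $qp+pq=1$, and one looks for $x,y$ in the span of $q,p,qp$. The matrix route is simpler, so that is the one I would write up.

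The only delicate step is the existence of an invertible element anticommuting with $q$ in the anisotropic case. It rests on the nondegeneracy of $N$ restricted to $q^\perp\cap R_0$, which is exactly why the isotropic case has to be treated separately.
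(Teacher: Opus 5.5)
Your proof is correct. It differs from the paper in that the paper gives no argument at all: the lemma is quoted from Amitsur and Rowen \cite[Corollary~0.9]{MR1286824}, where it sits inside a broader study of commutators in division algebras of small degree. You instead give a self-contained, elementary proof. From $uv+vu=-N(u,v)$ on $R_0$ you obtain an invertible $y$ anticommuting with $q$, and then write $q$ explicitly as the commutator of $\tfrac12 qy^{-1}$ and $y$; the isotropic case you handle through $R\cong M_2(F)$ and $q\sim E_{12}$. The citation buys brevity and places the statement in the general theory: degree $3$, and the degree-$4$ counterexample mentioned in the concluding section. Your route buys an explicit commutator, using only the quadratic-form facts the paper already relies on elsewhere.

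One correction to your closing remark: the case split is not actually needed. What the construction requires is not that $N$ be nondegenerate on $W=q^\perp\cap R_0$, but only that $N|_W\not\equiv 0$. For any $q\in R_0\setminus\{0\}$, $W$ is $2$-dimensional. Since $N$ is nondegenerate on $R_0$, the orthogonal complement of $W$ inside $R_0$ is $Fq$, so the radical of $N|_W$ is $W\cap Fq$, which has dimension at most $1$. Hence, in characteristic $\neq 2$, there is $y\in W$ with $N(y)\neq 0$, and $x=\tfrac12 qy^{-1}$ gives $xy-yx=q$ verbatim even when $N(q)=0$. This removes the appeal to Wedderburn and to the Jordan form.
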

	In particular, since every quaternion algebra over $F$ is a central simple algebra of degree $2$, we have the following.
	\begin{corollary}
		\label{cor: trace zero quaternion}
		Let $Q$ be a quaternion algebra over a field $F$ with $\operatorname{char}F\neq2$. Then every element $q\in Q$ satisfying $\operatorname{Trd}(q)=0$ is a commutator in $Q$.
	\end{corollary}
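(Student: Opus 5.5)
The statement is an immediate specialization of Lemma~\ref{lem:amitsurrowencommutator}, so the plan is only to check that its hypotheses hold for a quaternion algebra and that the trace notions agree.

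First, I will verify that $Q$ is a central simple algebra of degree $2$ over $F$. Since $\operatorname{char}F\neq 2$, $Q$ has a standard basis $1,i,j,k$ with $i^2=a$, $j^2=b$, $ij=-ji=k$, $a,b\in F^\times$. A routine computation with this basis shows that the center of $Q$ is $F$ and that $Q$ has no nontrivial two-sided ideals. As $\dim_F Q=4=2^2$, $Q$ is central simple of degree $2$. Equivalently, $Q\otimes_F\bar F\cong M_2(\bar F)$.

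Second, I will identify the reduced trace. As recalled in the Preliminaries, the reduced trace of $Q$ coincides with the composition-algebra trace $Tr(q)=q+\overline{q}$. To see this directly, apply the Cayley--Hamilton identity of Lemma~\ref{lem: identities}(6): $q^2-Tr(q)q+N(q)=0$. For $q\notin F$, this is the reduced characteristic polynomial of $q$, since the minimal polynomial has degree $2$ and must match the reduced characteristic polynomial of degree $2$. For $q\in F$ both traces equal $2q$. Hence $\operatorname{Trd}(q)=0$ is the same condition as $Tr(q)=0$.

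Finally, Lemma~\ref{lem:amitsurrowencommutator} gives that every element of reduced trace zero is a commutator $xy-yx$ with $x,y\in Q$. There is no real obstacle here. The only point needing care is the identification of $\operatorname{Trd}$ with $Tr$, which the paper already cites from \cite{MR1286824}.
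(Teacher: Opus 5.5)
Your proposal is correct and matches the paper, which obtains the corollary directly from Lemma~\ref{lem:amitsurrowencommutator} by noting that every quaternion algebra is central simple of degree $2$. Your identification of $\operatorname{Trd}$ with $Tr$ is sound, but this corollary does not need it, since it is already phrased in terms of $\operatorname{Trd}$; the identification only matters later, when the corollary is applied with $Tr$ in Lemma~\ref{lem:trace-zero-commutator}.
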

	We recall the following standard proposition; see \cite[Chapter 1]{MR1763974} or \cite[1.4 (10)]{MR4812310}.
	\begin{proposition}
		\label{prop: element octonion to quaternion}
		Every element $a$ of an octonion algebra $\mathcal O$ over $F$ is contained in a quaternion subalgebra of $\mathcal O$. If $\mathcal O$ is split, this quaternion subalgebra may be taken to be split as well.
	\end{proposition}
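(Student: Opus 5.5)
Set $a\in\mathcal O$. The plan is to find a two-dimensional nondegenerate subalgebra $K\ni 1$ and an element $\omega\in K^\perp$ with $N(\omega)\neq0$ such that $a\in K\oplus K\omega$. Then \cref{prop: Cayley-Dickson} shows that $\mathcal H=K\oplus K\omega$ is a quaternion subalgebra.

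\textbf{Reduction.} Write $a=\alpha 1+a_0$ with $\alpha\in F$ and $a_0\in\mathcal O_0$; this is possible since $\operatorname{char}F\neq2$. It suffices to put $a_0$ inside a quaternion subalgebra. If $a_0=0$, any quaternion subalgebra works; one exists by the Cayley--Dickson decomposition $\mathcal O=\mathcal H\oplus\mathcal H\omega$ recalled above.

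\textbf{Case 1: $N(a_0)\neq0$.} Let $K=F1+Fa_0$. It is a subalgebra by the Cayley--Hamilton identity (Lemma~\ref{lem: identities}(6)), since $a_0^2=-N(a_0)1$. The Gram matrix of $N$ on the basis $\{1,a_0\}$ is $\operatorname{diag}(2,2N(a_0))$, so $K$ is nondegenerate. Hence $K^\perp$ is a nondegenerate $6$-dimensional space, and we may pick $\omega\in K^\perp$ with $N(\omega)\ne0$. Then $\mathcal H=K\oplus K\omega$ is a quaternion subalgebra containing $a$.

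\textbf{Case 2: $N(a_0)=0$, $a_0\neq0$.} Write $n=a_0$. Since $N$ is nondegenerate on the $7$-dimensional space $1^\perp$ and $n\in 1^\perp$ is isotropic, $n$ lies in a hyperbolic plane of $1^\perp$. So there is $n'\in 1^\perp$ with $N(n')=0$ and $N(n,n')=1$. Put $w=n-n'$ and $\omega=n+n'$. Both have trace zero, with $N(w)=-1$ and $N(\omega)=1$. Moreover
$$N(w,\omega)=2N(n)-2N(n')=0.$$
Hence $K=F1+Fw$ is a nondegenerate subalgebra, as in Case 1, and $\omega\in K^\perp$ with $N(\omega)\neq0$. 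The quaternion subalgebra $\mathcal H=K\oplus K\omega$ contains $w$ and $\omega$, and therefore contains $n=\tfrac12(w+\omega)$. This $\mathcal H$ is automatically split, since it contains the nonzero isotropic vector $n$.

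\textbf{Split refinement.} It remains to make the choice of $\omega$ in Case 1 give a split algebra when $\mathcal O$ is split; I expect this to be the only delicate point. The norm of $\mathcal O$ is hyperbolic of Witt index $4$ and $\dim K=2$. Hence the nondegenerate space $K^\perp$ has Witt index at least $2$, so it contains a hyperbolic plane and represents every scalar. Choose $\omega\in K^\perp$ with $N(\omega)=-1$. Since $1\perp\omega$, we get $N(1+\omega)=1-1=0$. Thus $\mathcal H=K\oplus K\omega$ contains the nonzero zero divisor $1+\omega$, so its norm is isotropic and $\mathcal H$ is split.
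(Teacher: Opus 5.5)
Your proof is correct, and it takes a different route from the paper, which gives no argument: it recalls the statement as standard and cites \cite[Chapter~1]{MR1763974} and \cite[1.4~(10)]{MR4812310}. You instead give a self-contained construction that uses only tools already in the paper: the doubling of Proposition~\ref{prop: Cayley-Dickson}, nondegeneracy of $N$ on $1^\perp$, and hyperbolicity of the split norm.

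Your argument isolates the one delicate case. When the pure part $n=a_0$ is a nonzero isotropic vector, $F1+Fa=F1+Fn\cong F[X]/(X^2)$ has degenerate norm and cannot be doubled directly. You get around this by taking a hyperbolic partner $n'\in 1^\perp$. You then double the split \'etale algebra $F1+F(n-n')$ by $\omega=n+n'$, and the resulting quaternion subalgebra is automatically split. In the anisotropic case, your Witt-index count is correct: $\dim(W\cap K^\perp)\geq 4+6-8=2$ for a $4$-dimensional totally isotropic $W$. So $K^\perp$ is universal, and choosing $N(\omega)=-1$ puts the nonzero isotropic element $1+\omega$ in $\mathcal H$, which settles the split refinement.

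There is one small loose end. When $a_0=0$ and $\mathcal O$ is split, you still need a split quaternion subalgebra. The paper's model supplies one, namely $\mathcal Q=M_2(F)$. Alternatively, apply your Case~1 construction, with the split refinement, to any trace-zero element of nonzero norm.

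The citation buys brevity. Your argument buys an explicit construction and shows exactly where splitness comes from.
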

	
	\begin{lemma}\label{lem:trace-zero-commutator}
		If $\mathcal O$ is an octonion algebra, then the image of $\Phi(x,y) = xy-yx$ is $\mathcal O_0$, the set of pure octonions.
	\end{lemma}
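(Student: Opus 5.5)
The plan is to prove the two inclusions separately. The inclusion $\operatorname{Im}(\Phi)\subseteq\mathcal O_0$ will come from the trace, and the reverse inclusion will be reduced to the quaternion case via Proposition~\ref{prop: element octonion to quaternion} and Corollary~\ref{cor: trace zero quaternion}.

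For the inclusion $\operatorname{Im}(\Phi)\subseteq\mathcal O_0$, I will show that $Tr(xy)=Tr(yx)$ for all $x,y\in\mathcal O$. Since $Tr(z)=N(z,1)$ (because $\overline z=N(z,1)1-z$), item (7) of Lemma~\ref{lem: identities} gives
$$Tr(xy)=N(xy,1)=N(x,\overline y)\qquad\text{and}\qquad Tr(yx)=N(yx,1)=N(y,\overline x).$$
The remaining step is to check $N(x,\overline y)=N(y,\overline x)$. Using item (7) again,
$$N(y,\overline x)=N(y\cdot 1,\overline x)=N(1,\overline y\,\overline x)=N(1,\overline{xy})=Tr(\overline{xy}),$$
which uses that conjugation is an anti-automorphism. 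Since $Tr(\overline z)=Tr(z)$, this equals $Tr(xy)$. Alternatively, one can argue directly: conjugation is an isometry, so $N(x,\overline y)=N(\overline x,y)$, and the form is symmetric. Hence $Tr(xy-yx)=0$, which gives the inclusion.

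For the reverse inclusion, let $z\in\mathcal O_0$. By Proposition~\ref{prop: element octonion to quaternion}, $z$ lies in some quaternion subalgebra $\mathcal H\subseteq\mathcal O$. The trace of $\mathcal H$ as a composition algebra agrees with the restriction of $Tr$ on $\mathcal O$, because the norm of $\mathcal H$ is the restriction of $N$ and $1\in\mathcal H$. Therefore $\operatorname{Trd}_{\mathcal H}(z)=Tr(z)=0$. Corollary~\ref{cor: trace zero quaternion} then provides $x,y\in\mathcal H$ with $z=xy-yx$, computed in $\mathcal H$. Since $\mathcal H$ is a subalgebra, this is also a commutator in $\mathcal O$, so $z\in\operatorname{Im}(\Phi)$.

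I do not expect a serious obstacle. The only point requiring care is the compatibility of traces, namely that the reduced trace of $\mathcal H$ coincides with $Tr$ restricted from $\mathcal O$. This holds because the reduced trace equals the composition-algebra trace on quaternion algebras (as recalled in the Preliminaries) and the conjugation of $\mathcal H$ is the restriction of that of $\mathcal O$. This last fact follows from the formula $\overline x=N(x,1)1-x$.
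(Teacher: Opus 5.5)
Your proposal is correct and follows the paper's proof essentially step for step. The forward inclusion comes from $Tr(xy)=Tr(yx)$, obtained via item (7) of Lemma~\ref{lem: identities} and the anti-automorphism property of conjugation. The reverse inclusion places $z$ in a quaternion subalgebra (Proposition~\ref{prop: element octonion to quaternion}) and invokes Corollary~\ref{cor: trace zero quaternion}; your remark on why the trace of $\mathcal H$ agrees with the restriction of $Tr$ just makes explicit a step the paper states without comment.
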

	
	\begin{proof}
		First notice that \[Tr(xy) = N(xy,1) = N(x,\overline{y}) = N(1,\overline{x}\overline{y}) = N(1,\overline{yx}) = N(yx,1) = Tr(yx).\] As a consequence, $Tr(xy-yx) = 0$, and we get the first inclusion.
		
		On the other hand, let $q\in \mathcal O$ be a trace zero element. By Proposition \ref{prop: element octonion to quaternion}, $q$ lies in a quaternion subalgebra $\mathcal H$ and $0 = Tr_{\mathcal O}(q) = Tr_{\mathcal H}(q)$. So $q$ is a trace zero element in a quaternion algebra. By Corollary \ref{cor: trace zero quaternion} $q$ is a commutator.
	\end{proof}
	
	We notice that the above result is a continuation of a result started by Shoda in \cite{Shoda}, later improved by Albert and Muckenhoupt in \cite{MR83961}, see Section \ref{sec:concluding} for the precise statement and its place in that literature.
	
	\begin{lemma}
		\label{lem: A=B image}
		Let $A\in\mathcal O\setminus\{0\}$. Then the image of the map
		$\Phi(x,y)=A(xy-yx)$ is a proper vector subspace of $\mathcal O$ of dimension $\leq 7$.
	\end{lemma}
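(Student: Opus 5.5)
By Lemma~\ref{lem:trace-zero-commutator}, the set $\{xy-yx : x,y\in\mathcal O\}$ is exactly the subspace $\mathcal O_0$ of trace-zero octonions. The plan is to rewrite the map through this subspace. We have
$$
\operatorname{Im}(\Phi)=\{A(xy-yx): x,y\in\mathcal O\}=\{Az : z\in\mathcal O_0\}=L_A(\mathcal O_0).
$$
Since $L_A$ is linear and $\mathcal O_0$ is a subspace, the image is a vector subspace, namely the image of a linear map restricted to a subspace.

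For the dimension bound, note that $\dim \mathcal O_0=7$, because $\mathcal O_0=1^\perp$ and $N$ is nondegenerate. Therefore
$$
\dim L_A(\mathcal O_0)\le 7<8=\dim\mathcal O,
$$
which shows the image is proper.

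The two cases of $A$ give more precise information, though neither is needed for the bound.

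If $A$ is invertible, then $L_A$ is bijective, so the image has dimension exactly $7$. Using Lemma~\ref{lem: identities}(7), $N(Az,A)=N(z,\overline{A}A)=N(A)N(z,1)$. So for $z\in\mathcal O_0$ we get $N(Az,A)=0$, i.e. $L_A(\mathcal O_0)\subseteq A^\perp$, and equality holds by dimension. This anticipates item (i) of Theorem~\ref{thm:main}.

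If $A$ is a nonzero zero divisor, then $L_A(\mathcal O_0)\subseteq A\mathcal O$, which has dimension $4$ by Proposition~\ref{Image_of_L_A}.

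There is no real obstacle. The only point needing care is that the set of commutators is the full subspace $\mathcal O_0$, not merely contained in it; this is exactly what Lemma~\ref{lem:trace-zero-commutator} supplies, and it is what guarantees the image is a subspace rather than just spanning one.
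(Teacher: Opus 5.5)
Your proof is correct and essentially the paper's. Both use Lemma~\ref{lem:trace-zero-commutator} to identify the set of commutators with the $7$-dimensional subspace $\mathcal O_0$, so the image is $L_A(\mathcal O_0)=A\mathcal O_0$, a subspace of dimension at most $7$. Your side remarks on the invertible and zero-divisor cases are also sound and anticipate Propositions~\ref{prop: A=B invertible case} and~\ref{prop: zero-divisor commutator}; in the latter the paper sharpens your inclusion to the equality $A\mathcal O_0=A\mathcal O$.
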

	
	\begin{proof}
		By the previous Lemma, the image of $xy-yx$ is $\mathcal O_0$ - a $7$-dimensional vector subspace of $\mathcal O$. Hence, the image of $\Phi$ is the image of the linear map $L_A$ on $\mathcal O_0$, also a vector space, denoted by $A\mathcal O_0$ of dimension $\leq 7$.
	\end{proof}
	
	As before, it is natural to ask which subspaces can be realized as $A\mathcal O_0$. We will show that if $A$ is invertible, then the image is $A^{\perp} = \{x\in \mathcal O\,|\, N(A,x) = 0\}$ and if $A$ is a nonzero zero divisor, then $A\mathcal O_0 = A\mathcal O$, which is a totally isotropic 4-dimensional subspace of $\mathcal O$.
	
	Let us start with the invertible case:
	
	\begin{proposition}
		\label{prop: A=B invertible case}
		Let $A \in \mathcal O$ be invertible. Then the image of $\Phi(x,y) = A(xy-yx)$ evaluated on $\mathcal O$ is \[A^\perp = \{x\in \mathcal O\,|\, N(A,x) = 0\}.\] This is a 7-dimensional subspace (hyperplane) of $\mathcal O$. Moreover, $A\mathcal O_0 = \mathcal O_0$ if and only if $A = \alpha 1$, for some $\alpha \in F$.
	\end{proposition}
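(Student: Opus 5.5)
By Lemma~\ref{lem:trace-zero-commutator}, the image of $\Phi$ is $A\mathcal O_0=L_A(\mathcal O_0)$. Since $A$ is invertible, $L_A$ is a bijection of $\mathcal O$, so $A\mathcal O_0$ is a $7$-dimensional subspace. It therefore suffices to prove the inclusion $A\mathcal O_0\subseteq A^\perp$; equality then follows by comparing dimensions. To see that $A^\perp$ is indeed a hyperplane, note that $N$ is nondegenerate and $A\neq 0$.

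For the inclusion, let $z\in\mathcal O_0$, so $N(z,1)=Tr(z)=0$. Using item (7) of Lemma~\ref{lem: identities} with $c=A$, I get
$$N(A,Az)=N(A\cdot 1,Az)=N(1,\overline{A}(Az)).$$
By item (4) of Lemma~\ref{lem: identities}, the alternative law gives $\overline{A}(Az)=N(A)z$; this uses $\overline{\overline A}=A$. Hence
$$N(A,Az)=N(A)N(1,z)=N(A)\,Tr(z)=0.$$
This shows $A\mathcal O_0\subseteq A^\perp$, and the first claim follows.

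For the ``moreover'' part, suppose first that $A=\alpha1$. Then $\alpha\neq0$ because $A$ is invertible, and so $A\mathcal O_0=\alpha\mathcal O_0=\mathcal O_0$. Conversely, suppose $A\mathcal O_0=\mathcal O_0$. Then $A^\perp=1^\perp$, since both sides equal $A\mathcal O_0$ by the first part. Taking orthogonal complements with respect to the nondegenerate form $N$ gives $FA=F1$, so $A=\alpha1$ for some $\alpha\in F$.

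The only step needing care is the identity $N(A,Az)=N(A)Tr(z)$, where the order of factors matters in the nonassociative setting. I would check it via item (7) exactly as written above. Alternatively, one can polarize the norm multiplicativity: $N(Ax,Ay)=N(A)N(x,y)$ with $x=1$, $y=z$. This route is cleaner, since it follows directly from $N(A(x+y))=N(A)N(x+y)$.
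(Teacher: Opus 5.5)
Your proof is correct and follows the paper's argument. You get the inclusion $A\mathcal O_0\subseteq A^\perp$ from $N(A,Az)=N(A)\,Tr(z)$, which the paper derives exactly by your alternative polarization of $N(Ax)=N(A)N(x)$. Equality then follows by counting dimensions, since $L_A$ is injective, and the ``moreover'' part comes from taking orthogonal complements in $A^\perp=1^\perp$. In your first route via items (4) and (7), applying item (4) with $a=\overline A$ gives $\overline A(Az)=N(\overline A)z$, so you also need $N(\overline A)=N(A)$; this is immediate from item (1).
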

	
	\begin{proof}
		First, notice that for any $x,y\in \mathcal O$, we have \[N(Ax,Ay) = N(A)N(x,y).\] Indeed, it follows by the linearization of the identity $N(Ax) = N(A)N(x)$.
		
		Applying the above for $y = 1$, we have
		\[N(Ax,A) = N(A)N(x,1) = N(A)Tr(x)\]
		Now if $x\in \mathcal O_0$, we obtain $N(Ax,A) = 0$, i.e., $Ax\in A^{\perp}$, which means $A\mathcal O_0\subseteq A^{\perp}$.
		
		Assuming now $A$ invertible, left multiplication by $A$ is injective, and hence, $\mathrm{Im} L_A$ is injective on $\mathcal O_0$. Since $\mathcal O_0$ has dimension $7$, its image has also dimension $7$. Since $A^{\perp}$ is also $7$-dimensional (once $N$ is nondegenerate), the equality follows.
		
		Finally, since $Tr(x) = N(x,1)$, we have $\mathcal O_0 = 1^{\perp} = (\alpha 1)^{\perp}$  for any $\alpha \in F\setminus\{0\}$. Also, since $N$ is nondegenerate, $A^\perp = 1^\perp$ implies $A = \alpha 1$, and the last statement is proved.
	\end{proof}

	\begin{proposition}
		\label{prop: zero-divisor commutator}
		Let $A\in \mathcal O$ be a nonzero zero divisor. Then $A\mathcal O_0 = A\mathcal O = \mathrm{Im}L_A$. In particular, $\dim A\mathcal O_0 = 4$.
	\end{proposition}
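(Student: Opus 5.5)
Since $\mathcal O=F1\oplus\mathcal O_0$, we have $A\mathcal O=FA+A\mathcal O_0$. Therefore the inclusion $A\mathcal O_0\subseteq A\mathcal O$ is automatic, and equality holds exactly when $A\in A\mathcal O_0$. So the plan is to produce some $x\in\mathcal O_0$ with $Ax=A$. Once equality is established, the dimension statement follows from Proposition~\ref{Image_of_L_A}(3).

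\textbf{Case $Tr(A)\neq0$.} Put $f=A/Tr(A)$. By the proof of Proposition~\ref{Image_of_L_A}(5), $f$ is an idempotent of trace $1$, and $1-f$ is a zero divisor with $f(1-f)=0$. Take $x=2f-1$. Then $Tr(x)=2Tr(f)-2=0$, so $x\in\mathcal O_0$. Using $f^2=f$, we get $fx=2f-f=f$, and hence $Ax=A$.

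\textbf{Case $Tr(A)=0$.} Here $A^2=0$ by Cayley--Hamilton, so Proposition~\ref{Image_of_L_A}(4) gives $\ker L_A=A\mathcal O$. This kernel is $4$-dimensional and totally isotropic. The idea is to find $u\in\mathcal O$ with $Au=A$ and then adjust $u$ by an element of the kernel so that its trace becomes zero. To find $u$, I would use $A\in A\mathcal O$, which holds because $A\in\ker L_A=\operatorname{Im}L_A$ (since $AA=0$); so $A=Au$ for some $u$. If $Tr(u)=0$ we are done. Otherwise, replace $u$ by $u-k$ with $k\in\ker L_A$ and $Tr(k)=Tr(u)$. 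Such a $k$ exists as long as $Tr$ does not vanish identically on $\ker L_A=A\mathcal O$.

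Suppose, for contradiction, that $Tr$ vanishes on $A\mathcal O$. Then $N(Ay,1)=0$ for all $y$, so $1\perp A\mathcal O$. Since $A\mathcal O$ is maximal totally isotropic (Proposition~\ref{Image_of_L_A}), it equals its own orthogonal complement, and so $1\in A\mathcal O$. But $N(1)=1\neq0$, while every element of $A\mathcal O$ is isotropic. This contradiction gives the required $k$, and then $x=u-k\in\mathcal O_0$ satisfies $Ax=A$.

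I expect this last step, showing the trace is nonzero on the kernel, to be the only real point of the proof. An alternative is a direct computation: conjugate $A$ to $n_0$ and use \eqref{eq:n0-identities}. Since $n_0\overline{p_0}=n_0$ and $\overline{p_0}=E_{22}$ has trace $1$, we can take $x=\overline{p_0}-p_0=E_{22}-E_{11}$, which has trace zero. Then $n_0x=n_0-n_0p_0=n_0$, because $E_{12}E_{11}=0$. The abstract argument avoids needing to justify the conjugation, so I would present that one.
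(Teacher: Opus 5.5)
Your proof is correct. It starts from the same reduction as the paper: $A\mathcal O=FA+A\mathcal O_0$, so everything comes down to finding $x\in\mathcal O_0$ with $Ax=A$. Equivalently, you need an element of $\ker L_A$ with a prescribed nonzero trace. The two arguments differ in how that element is produced.

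The paper does it with no case split. By Lemma~\ref{lem: identities}(4), every $k=\overline{A}y$ lies in $\ker L_A$. By Lemma~\ref{lem: identities}(7), $Tr(\overline{A}y)=N(y,A)$, and nondegeneracy (with $A\neq0$) lets one choose $y$ so that this equals $-2$. Then $x=1+k$ works.

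You instead split on $Tr(A)$.
\begin{itemize}
\item For $Tr(A)\neq0$ you get a nice explicit witness, $x=2f-1$.
\item For $Tr(A)=0$ you argue structurally. You use $\ker L_A=A\mathcal O$ (Proposition~\ref{Image_of_L_A}(4), which rests on Gow's dimension result), together with self-orthogonality of $A\mathcal O$, to rule out $1\perp A\mathcal O$.
\end{itemize}
One point of wording in the second case: $(A\mathcal O)^\perp=A\mathcal O$ follows from the dimension count $\dim(A\mathcal O)^\perp=8-4=4$ together with $A\mathcal O\subseteq(A\mathcal O)^\perp$. It does not follow from maximality alone, since in general a maximal totally isotropic $W$ only has $W^\perp/W$ anisotropic. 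You have $\dim A\mathcal O=4$ available, so the step is fine.

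Two simplifications are available.
\begin{itemize}
\item The detour to find $u$ with $Au=A$ is unnecessary, since $u=1$ always works.
\item With $u=1$, your isotropy argument also covers $Tr(A)\neq0$, so the case split can be dropped. In that case $\ker L_A=(1-f)\mathcal O$ is also a $4$-dimensional totally isotropic subspace by Proposition~\ref{Image_of_L_A}(5).
\end{itemize}
In both cases $\ker L_A=\overline{A}\mathcal O$, because $\overline{A}=-A$ when $Tr(A)=0$ and $\overline{A}=Tr(A)(1-f)$ otherwise.

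Seen this way, the paper's identity $Tr(\overline{A}y)=N(y,A)$ is exactly the explicit form of your claim that $Tr$ does not vanish on $\ker L_A$. The paper's route buys uniformity and elementarity: the existence step needs only the identities in Lemma~\ref{lem: identities}. Proposition~\ref{Image_of_L_A} is used only for the final claim $\dim A\mathcal O_0=4$. Your route buys an explicit witness when $Tr(A)\neq0$ and a more geometric explanation, at the price of relying on the isotropic-subspace structure.
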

	
	\begin{proof}
		First, notice that $\mathcal O = \mathcal O_0 \oplus F \cdot 1$. Hence $\mathrm{Im} L_A = A\mathcal O_0 + F\cdot A$. As a consequence, since $\dim \mathrm{Im}L_A = 4$ (by Proposition \ref{Image_of_L_A}), $\dim A\mathcal O_0 \geq 4 - 1 = 3$. On the other hand, $\dim A\mathcal O_0 = 4$ if and only if $A\in A\mathcal O_0$, if and only if there exists $x\in \mathcal O_0$ such that $Ax = A$, which is equivalent to $A(x-1) = 0$. If $k = x-1$, the later is equivalent to finding an element $k\in \ker L_A$ such that $k+1$ has trace zero. But $0 = Tr(k+1) = Tr(k)+Tr(1) = Tr(k)+2$ (recall that $Tr(1) = 2$). So, $\dim A \mathcal O_0 = 4$ if and only if there exists an element $k\in \ker L_A$ with $Tr(k) = -2$.     
		Now, for $y\in \mathcal O$, let $k = \overline{A}y$. By item (4) of Lemma \ref{lem: identities}, $Ak = A(\overline{A}y) = N(A)y = 0$, that is, $k\in \ker L_A$. Moreover, by item (7) of Lemma \ref{lem: identities}, $Tr(k) = N(\overline{A}y,1) = N(y,A)$. Since $A\neq 0$ and $N$ is nondegenerate, we may choose $y$ such that $N(y,A) = -2$. Then $k = \overline{A}y$ is an element of $\ker L_A$ with trace equals to $-2$, and this finishes the proof.
	\end{proof}
	
	\section{At least one Coefficient is invertible}
	\label{sec:invertible-case}
	
	In this section we assume that at least one of $A$ and $B$ is invertible. Of course we may assume $A$ to be invertible (since the image of $\Phi_{A,B}$ and $\Phi_{B,A}$ is the same). 
	Also, we may assume that $A$ and $B$ are linearly independent. Indeed, the case $A = B$ was considered in Proposition \ref{prop: A=B invertible case} and if $B = \alpha A$, for some $\alpha\in F\setminus\{1\}$ we have, $\Phi(x,1) =(1-\alpha)Ax$ and the image of $\Phi$ is $A\mathcal O = \mathcal O$. So in this section we assume $A$ and $B$ linearly independent and $A$ invertible.
	
	If $\mathcal O$ is a division octonion algebra, the case is immediately settled, since in this case $A-B$ is invertible and $\Phi$ is surjective by Lemma  \ref{lem: N(A-B) nonzero}.
	
	Hence from now on in this section, we assume $\mathcal O$ is a split octonion algebra, realized as in section 2 (i.e., as $\mathcal O  = \mathcal Q \oplus \mathcal Q \omega$, where $\mathcal Q = M_2(F)$ and $\omega^2=1$). We notice, however, that $\mathcal O$ may also contain quaternion subalgebras $\mathcal H$ which are division algebras, and the subalgebra generated by $A$ and $B$ may be one of them (see Proposition~\ref{prop: T contained in quaternion} and Remark~\ref{rem:unified-coverage}).
	
	We now notice that the surjectivity of the map under consideration is invariant under the action of the group $\operatorname{Aut}_F(\mathcal O)$.
	\begin{lemma}
		\label{lem: automorphism invariance}
		Let $A,B\in\mathcal O$ and let $\sigma\in\operatorname{Aut}_F(\mathcal O)$. Then
		$$
		\operatorname{Im}(\Phi_{\sigma(A),\sigma(B)})
		=
		\sigma\bigl(\operatorname{Im}(\Phi_{A,B})\bigr).
		$$
		In particular, $\Phi_{A,B}$ is surjective if and only if
		$\Phi_{\sigma(A),\sigma(B)}$ is surjective.
	\end{lemma}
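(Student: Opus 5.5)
The plan is a direct computation using only that $\sigma$ is an $F$-algebra automorphism, hence multiplicative and bijective. For $x,y\in\mathcal O$ we have
$$
\sigma\bigl(\Phi_{A,B}(x,y)\bigr)=\sigma(A(xy))-\sigma(B(yx))=\sigma(A)\bigl(\sigma(x)\sigma(y)\bigr)-\sigma(B)\bigl(\sigma(y)\sigma(x)\bigr)=\Phi_{\sigma(A),\sigma(B)}(\sigma(x),\sigma(y)).
$$
This identity already gives the inclusion $\sigma(\operatorname{Im}\Phi_{A,B})\subseteq\operatorname{Im}\Phi_{\sigma(A),\sigma(B)}$.

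For the reverse inclusion, we use that $\sigma$ is surjective. Any pair $(u,v)\in\mathcal O\times\mathcal O$ can be written as $(\sigma(x),\sigma(y))$ with $x=\sigma^{-1}(u)$ and $y=\sigma^{-1}(v)$. The displayed identity then shows that $\Phi_{\sigma(A),\sigma(B)}(u,v)=\sigma(\Phi_{A,B}(x,y))$, which lies in $\sigma(\operatorname{Im}\Phi_{A,B})$. Hence the two images coincide.

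For the statement about surjectivity, we note that $\sigma$ is a bijection of $\mathcal O$. Therefore $\sigma(\operatorname{Im}\Phi_{A,B})=\mathcal O$ holds if and only if $\operatorname{Im}\Phi_{A,B}=\mathcal O$. Combined with the equality of images, this gives the equivalence.

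There is no real obstacle here. The only point needing care is that the order and bracketing of the products are preserved, and this is automatic because $\sigma$ is multiplicative on every product, so nonassociativity plays no role.
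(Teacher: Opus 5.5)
Your proof is correct and follows the paper's argument: the multiplicativity identity $\sigma(\Phi_{A,B}(x,y))=\Phi_{\sigma(A),\sigma(B)}(\sigma(x),\sigma(y))$, followed by bijectivity of $\sigma$. You simply spell out the two inclusions that the paper compresses into ``since $\sigma$ is bijective''.
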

	\begin{proof}
		For all $x,y\in\mathcal O$,
		$$
		\begin{aligned}
			\sigma\bigl(\Phi_{A,B}(x,y)\bigr)
			&=\sigma\bigl(A(xy)-B(yx)\bigr)\\
			&=\sigma(A)\bigl(\sigma(x)\sigma(y)\bigr)
			-\sigma(B)\bigl(\sigma(y)\sigma(x)\bigr)\\
			&=\Phi_{\sigma(A),\sigma(B)}(\sigma(x),\sigma(y)).
		\end{aligned}
		$$
		Since $\sigma$ is bijective, it follows that
		$$
		\operatorname{Im}(\Phi_{\sigma(A),\sigma(B)})
		=
		\sigma\bigl(\operatorname{Im}(\Phi_{A,B})\bigr).
		$$
		The last assertion follows immediately, since $\sigma$ is bijective.
	\end{proof}
	
	Let $T=\langle A,B\rangle$ be the subalgebra of $\mathcal O$ generated
	by $A$ and $B$. For every
	$\sigma\in\operatorname{Aut}_F(\mathcal O)$,
	$$
	\sigma(T)=\langle\sigma(A),\sigma(B)\rangle.
	$$
	
	Hence, by Lemma~\ref{lem: automorphism invariance}, when studying
	surjectivity of $\Phi$, we may replace $T$ by any representative of its
	$\operatorname{Aut}_F(\mathcal O)$-orbit and replace $(A,B)$ by the
	corresponding pair $(\sigma(A),\sigma(B))$.
	
	We first recall two standard facts concerning the subalgebra generated
	by two elements of an octonion algebra.
	
	\begin{lemma}\cite[Lemma 2.6]{MR4812310}
		\label{lem:two generated subalgebra}
		Let $T$ be a subalgebra of an octonion algebra $\mathcal O$ over a field $F$. If $T$ is generated by two elements $x,y\in\mathcal O$. Then
		$\dim_F T\leq4$. In fact,
		$$T\subseteq F1_\mathcal O+Fx+Fy+F(xy).$$
	\end{lemma}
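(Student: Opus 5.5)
The plan is to show that the span $V:=F1_{\mathcal O}+Fx+Fy+F(xy)$ is closed under multiplication. Since it contains $x$ and $y$, it then contains $T$, which gives $\dim_F T\leq 4$ at once. (Equality $T=V$ also holds, since $V\subseteq T$ trivially, but only the stated inclusion is needed.) The only tools will be the Cayley--Hamilton identity (6), the alternative laws (5), and the facts that $\overline{z}=Tr(z)1-z$ and $Tr(z)\in F$, all from Lemma~\ref{lem: identities}.

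First I check the products of spanning elements that do not involve $xy$. By Cayley--Hamilton, $x^2=Tr(x)x-N(x)1\in V$, and likewise $y^2\in V$. For $yx$, I write
\[
yx=\overline{\overline{x}\,\overline{y}}=Tr(\overline{x}\,\overline{y})1-\overline{x}\,\overline{y}.
\]
Expanding $\overline{x}\,\overline{y}=(Tr(x)1-x)(Tr(y)1-y)$ shows it lies in $V$, so $yx\in V$. Products with $1$ are trivial.

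Next come the products involving $xy$. Using $\overline{x}=Tr(x)1-x$ and identity (4), I get $x(xy)=Tr(x)\,xy-x(\overline{x}y)=Tr(x)\,xy-N(x)y\in V$; the symmetric computation gives $(xy)y\in V$. For $(xy)x$, the flexible law from (5) gives $(xy)x=x(yx)$. Substituting $yx=Tr(yx)1-\overline{x}\,\overline{y}$, it remains to handle $x(\overline{x}\,\overline{y})=N(x)\overline{y}\in V$, again by (4). The symmetric terms $y(xy)$ and $(xy)\,x$ are treated the same way, using $xy=Tr(xy)1-\overline{y}\,\overline{x}$ so that (4) applies to $y(\overline{y}\,\overline{x})$.

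Finally, $(xy)(xy)\in V$ by Cayley--Hamilton applied to $xy$. Every product of two spanning elements therefore lies in $V$, and by bilinearity $V$ is a subalgebra. The main obstacle is only to keep track of which side the conjugates fall on, so that identity (4) (together with its mirror $(b\overline{a})a=N(a)b$) can always be invoked; nothing deeper is required.
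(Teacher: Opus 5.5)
Your proof is correct. Note that the paper does not prove this lemma: it quotes it from \cite[Lemma 2.6]{MR4812310}, and then invokes Artin's theorem separately to conclude that $T$ is associative. Your argument is therefore a self-contained alternative to the citation. You check directly that all products of $1,x,y,xy$ lie in $V$, using only the alternative and flexible laws, identity (4) and its mirror, and Cayley--Hamilton. This avoids both the external reference and any appeal to Artin's theorem, at the cost of a short case check.

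Three small remarks:
\begin{enumerate}
\item You use $\overline{ab}=\overline b\,\overline a$, which is not in your list of tools (nor in Lemma~\ref{lem: identities}). It is standard, but you can avoid it with the linearized Cayley--Hamilton identity $xy+yx=Tr(x)y+Tr(y)x-N(x,y)1$. This gives $yx\in V$ at once. It also reduces $(xy)x=x(yx)$ and $y(xy)$ to $x(xy)=x^2y$ and $y(yx)=y^2x$, both of which visibly lie in $V$.
\item Your parenthetical claim that $V\subseteq T$ holds trivially needs $1\in T$. The paper treats $T$ as the possibly non-unital subalgebra: it deduces $1\in T$ from the invertibility of $A$. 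For example, $x=y=n_0$ gives $T=Fn_0\subsetneq V$. Since you only need $T\subseteq V$, this does not affect your proof.
\item In your sentence on the symmetric terms, $(xy)x$ was already handled. The only remaining product there is $y(xy)$, which you treat correctly.
\end{enumerate}
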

	
	Since $A$ is invertible, Cayley-Hamilton identity implies $1 = N(A)^{-1}(Tr(A)1-A)A$ and $T$ must be a unital algebra. By Artin's theorem and by the above Lemma, $T$ must be an associative subalgebra of $\mathcal O$ of dimension at most 4.
	Up to the action of the automorphism group $\operatorname{Aut}_F(\mathcal O)$, such subalgebras were classified in \cite[Section~6.4]{MR4812310}.
	Write $\mathcal O= \mathcal Q\oplus \mathcal Q \omega $ where $\mathcal Q= M_2(F)$ and $\omega ^2=1$

	\begin{lemma}
		\label{lem: unitary subalgebras}
		Let $T\subseteq \mathcal O$ be a unital $F$-subalgebra and $\dim_F T\leq4$. Then, up to the action of $\operatorname{Aut}_F(\mathcal O)$, one of the following holds:
		\begin{enumerate}
			\item If $\dim_F T=1$, then $T=F$.
			\item If $\dim_F T=2$, then $T$ is one of the following:
			\begin{enumerate}
				\item a separable quadratic field extension of $F$;
				\item a split composition algebra, isomorphic to $F[X]/(X^2-X)\cong F\times F$;
				\item a local algebra, isomorphic to $F[X]/(X^2)$.
			\end{enumerate}
			
			\item If $\dim_F T=3$, then the radical of the restriction of the norm form to $T$ is nontrivial. Moreover, $T$ belongs to one of the following two types:
			\begin{enumerate}
				\item $F+Fp_0+Fn_0$;
				\item $F+(Fp_0+Fn_0)\omega$.
			\end{enumerate}
			
			\item If $\dim_F T=4$, then one of the following occurs:
			\begin{enumerate}
				\item $T$ is a quaternion algebra, either split or non-split;
				\item $T$ belongs to the $\operatorname{Aut}_F(\mathcal O)$-orbit of $\mathcal C+(Fp_0+Fn_0)\omega$, where $\mathcal C\subseteq\mathcal Q$ is a two-dimensional composition algebra;
				\item the radical $J=\{x\in T:N(x)=0\}$ of the restriction of the norm form to $T$ has dimension $3$. In this case, $J$ is isomorphic to the Heisenberg algebra, and $T$ belongs to the $\operatorname{Aut}_F(\mathcal O)$-orbit of $F+(n_0\mathcal O\cap\mathcal O n_0)$.
			\end{enumerate}
		\end{enumerate}
	\end{lemma}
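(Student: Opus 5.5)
The lemma is a restatement of the classification in \cite[Section~6.4]{MR4812310}. My plan is to organize the argument by $\dim_F T$, leaning on the structure of $T$ as a unital associative subalgebra of $\mathcal O$ equipped with the restriction of $N$. Write $\operatorname{rad}(T)$ for the radical of $N|_T$. I will use repeatedly that two subalgebras isomorphic via a norm-preserving (isometric) algebra isomorphism are conjugate under $\operatorname{Aut}_F(\mathcal O)$. This is the Witt-type extension theorem for composition subalgebras in the nondegenerate case, and the corresponding statement from \cite{MR4812310} in the degenerate case.

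The small dimensions are routine.
\begin{itemize}
\item If $\dim T=1$, then $T=F1$.
\item If $\dim T=2$, then $T=F[a]$ for some $a\notin F$, and by the Cayley--Hamilton identity $T\cong F[X]/(X^2-Tr(a)X+N(a))$. Completing the square (char $\neq 2$) reduces this to $F[X]/(X^2-d)$. The three cases are $d$ a nonsquare (a quadratic field), $d$ a nonzero square (then $T\cong F\times F$), and $d=0$ (then $T\cong F[X]/(X^2)$).
\item In the nondegenerate 2-dimensional cases, conjugacy follows from the Cayley--Dickson extension: any isometry between composition subalgebras extends to an automorphism.
\item In the local case, $T=F+Fn$ with $n^2=0$ and $Tr(n)=N(n)=0$. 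I would show that every nonzero $n$ with $Tr(n)=N(n)=0$ is conjugate to $n_0$, which is transitivity of $G_2$ on nonzero trace-zero isotropic vectors.
\end{itemize}

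For $\dim T=3$, I first show that $N|_T$ must be degenerate. A nondegenerate unital subalgebra is a composition subalgebra, and these have dimensions $1,2,4,8$ only. So $\operatorname{rad}(T)\neq0$. Next I would show that $\operatorname{rad}(T)$ is an ideal consisting of nilpotent or isotropic elements. Then $T/\operatorname{rad}(T)$ is either $F$ (radical of dimension $2$) or a 2-dimensional composition algebra (radical of dimension $1$).

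A 1-dimensional radical inside a 3-dimensional associative algebra containing a 2-dimensional composition algebra $\mathcal C$ is impossible. The reason is that $\mathcal C$ acts on $\operatorname{rad}(T)$ on both sides, and for a line this forces a contradiction with $\mathcal C\cap\mathcal C^\perp=0$ in the doubling $\mathcal C+\mathcal C^\perp$. So $\operatorname{rad}(T)$ is a 2-dimensional totally isotropic subspace of trace-zero elements closed under multiplication. Using $\mathcal C$-module arguments, I would normalize it either to $Fp_0+Fn_0$ with $p_0$ shifted appropriately, giving type (a), or to a subspace of $\mathcal Q\omega$, giving type (b). These are distinguished by whether the product on the radical is nonzero or zero.

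For $\dim T=4$ I again split on $\operatorname{rad}(T)$:
\begin{itemize}
\item Radical $0$: $T$ is a quaternion subalgebra, which is case (a).
\item Radical of dimension $2$: $T/\operatorname{rad}(T)$ is a 2-dimensional composition algebra $\mathcal C$, and $\operatorname{rad}(T)\subseteq\mathcal C^\perp$ is a 2-dimensional totally isotropic $\mathcal C$-submodule. Placing $\mathcal C\subseteq\mathcal Q$, it can be moved to $(Fp_0+Fn_0)\omega$, which is case (b).
\item Radical of dimension $3$: $J=\operatorname{rad}(T)$ is a nilpotent algebra, and its products land in a line. This identifies $J$ as the Heisenberg algebra. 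Its annihilator line is spanned by a trace-zero isotropic element, conjugate to $n_0$, and then $J=n_0\mathcal O\cap\mathcal O n_0$ by dimension count via Proposition~\ref{Image_of_L_A}. This is case (c).
\end{itemize}
Radicals of dimensions $1$ and $4$ are excluded by the same module and parity arguments as in the 3-dimensional case.

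The main obstacle is the conjugacy claims in the degenerate cases, namely transitivity of $\operatorname{Aut}_F(\mathcal O)$ on the relevant configurations of isotropic subspaces. For these I would rely directly on the explicit orbit computations of \cite[Section~6.4]{MR4812310} rather than reprove them.
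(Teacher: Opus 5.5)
The paper gives no proof of this lemma; it quotes the classification in \cite[Section~6.4]{MR4812310}. Deferring the orbit computations to that source is therefore fine. The problem is your structural sketch for $\dim_F T=3$, which contradicts the statement you are proving.

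You claim that a $1$-dimensional radical is impossible when $T/\operatorname{rad}(T)$ is a $2$-dimensional composition algebra $\mathcal C$. But type (a), $T=F+Fp_0+Fn_0$, is exactly the algebra of upper triangular matrices in $\mathcal Q=M_2(F)$. There $N(\alpha E_{11}+\gamma E_{22}+\beta E_{12})=\alpha\gamma$, so $\operatorname{rad}(N|_T)=Fn_0$ is a line and $T/Fn_0\cong F\times F$.

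Your bimodule argument only rules out the case where $\mathcal C$ is a field, since a quadratic field extension has no $1$-dimensional module. For $\mathcal C=Fe+F(1-e)\cong F\times F$ nothing goes wrong. For $r\in\mathcal C^\perp$ one has $re=\overline e\,r=(1-e)r$, so $e$ acts by $1$ on one side and by $0$ on the other; for example $p_0n_0=n_0$ and $n_0p_0=0$. There is no conflict with $\mathcal C\cap\mathcal C^\perp=0$.

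The rest of your $\dim 3$ argument then cannot be carried out.
\begin{itemize}
\item Type (a) has no $2$-dimensional radical to normalize. The element $p_0$ has trace $1$. The trace-zero elements of $F+Fp_0+Fn_0$ are $\alpha(p_0-\overline{p_0})+\beta n_0$, of norm $-\alpha^2$, so the only isotropic trace-zero elements lie in $Fn_0$.
\item Your proposed dichotomy (``product on the radical nonzero or zero'') is empty. Let $J=Fx+Fy$ be a $2$-dimensional radical. Then $x^2=y^2=0$ and $xy=-yx\in J$. Write $xy=\alpha x+\beta y$. The alternative law $x(xy)=x^2y=0$ gives $\beta\,xy=0$. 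If $\beta=0$, then $0=(xy)y=\alpha\,xy=\alpha^2x$. Either way $xy=0$, so $J^2=0$ always.
\end{itemize}

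The correct split is by $\dim\operatorname{rad}(N|_T)$:
\begin{itemize}
\item A $1$-dimensional radical $J=Fr$ satisfies $J^2=0$, so $T/J$ acts on it. A quadratic field has no $1$-dimensional module, so $T/J\cong F\times F$; this is the part of your module argument that actually works. This gives type (a).
\item A $2$-dimensional radical has quotient $F$ and zero multiplication on the radical. This gives type (b).
\end{itemize}

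Similarly, in $\dim_F T=4$ the exclusion of a $1$-dimensional radical is correct, but the reason is simply that $T/\operatorname{rad}$ would be a $3$-dimensional composition algebra, not the module argument from the $3$-dimensional case.
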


	We now consider the different possibilities for $T$ arising from Lemma~\ref{lem: unitary subalgebras}. Several of these cases can be treated simultaneously. We first consider the case when $T$ is contained in a quaternion subalgebra of $\mathcal O$; see Proposition~\ref{prop: T contained in quaternion}. We then consider subalgebras of the form
	$$\mathcal C+(Fp_0+Fn_0)\omega,$$
	where $\mathcal C$ is a two-dimensional composition algebra, together with the case
	$$F+(Fp_0+Fn_0)\omega;$$ both are treated simultaneously in Proposition~\ref{prop:unified}. Finally, we consider the case
	$$F+(n_0\mathcal O\cap\mathcal O n_0),$$
	where $n_0\mathcal O\cap\mathcal O n_0$ is a three-dimensional
	Heisenberg algebra; see Proposition~\ref{prop:T Heisenberg case}.

	\subsection{Subalgebra contained in a quaternion subalgebra.} 
	We first establish an auxiliary result that will be used to treat the case where the subalgebra generated by $A$ and $B$ lies within a quaternion subalgebra of $\mathcal O$.
	\begin{lemma}
		\label{lem:quaternion nonvanishing}
		Let $Q$ be a quaternion algebra over a field $F$ with
		$\operatorname{char}F\neq2$. Let $A\in Q^\times$ and $B\in Q$ with
		$B\neq A$. Put $C=A^{-1}B$ and $D=BA^{-1}$.
		For $X\in Q$, define
		$$
		T_X^C:Q\longrightarrow Q,
		\qquad
		T_X^C(Y)=XY-CYX,
		$$
		and
		$$
		f_D(X)=N\bigl(X-\overline{X}D\bigr),
		\qquad
		g_C(X)=\det_F(T_X^C),
		$$
		where $N$ denotes the reduced norm on $Q$.
		Then there exists $X_0\in Q$ such that
		$$
		f_D(X_0)\neq0
		\qquad\text{and}\qquad
		g_C(X_0)\neq0.
		$$
	\end{lemma}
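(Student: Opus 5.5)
Both $f_D$ and $g_C$ are polynomial functions on the $4$-dimensional $F$-space $Q$: $f_D$ has degree $\le 2$, and $g_C$ has degree $\le 4$ because $T^C_X$ is linear in $X$. The plan is to show that each is a \emph{nonzero} polynomial and then produce a common nonvanishing point.

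\textbf{Step 1: $f_D$ is nonzero.} At $X=1$ we get $f_D(1)=N(1-D)$. This is nonzero unless $1-D$ is a zero divisor, so the remaining job is to show $f_D$ is not identically zero whenever $D\neq 1$ (note $D\neq1$ because $B\neq A$). Write $X=x_0+u$ with $u$ pure, so $X-\overline{X}D=x_0(1-D)+u(1+D)$. Suppose $f_D\equiv0$. Then the $4$-dimensional space $V=\{X-\overline XD\}$ is totally isotropic for $N$. But $N$ is nondegenerate on a $4$-dimensional space, so totally isotropic subspaces have dimension $\le2$. Hence the linear map $X\mapsto X-\overline XD$ has rank $\le 2$, and one shows this forces $D=1$ by testing on $X=1$ and on pure quaternions. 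As a fallback, I would pass to $\overline F$, where $Q\cong M_2$, and compute directly.

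\textbf{Step 2: $g_C$ is nonzero.} At $X=1$ the map is $T^C_1(Y)=Y-CY=(1-C)Y$, with determinant $N(1-C)^2$. If $1-C$ is not invertible, choose another test point. A natural choice is $X$ pure with $X^2=\lambda\neq0$ and $X$ anticommuting with suitable elements; alternatively extend scalars to $\overline F$, where $Q=M_2(\overline F)$, and compute $\det(Y\mapsto XY-CYX)$. In the split setting $T^C_X=L_X-L_CR_X$ acts on $M_2\cong \overline F^2\otimes \overline F^2$ as $X\otimes I - C\otimes X^{t}$-type operators. Taking $X$ diagonal with distinct eigenvalues $\mu_1,\mu_2$ gives determinant $\prod_{i,j}$ of eigenvalue combinations, roughly $\prod_{i,j}(\mu_i-c_k\mu_j)$, where the $c_k$ are eigenvalues of $C$. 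This is a nonzero polynomial in $\mu$ provided $C\neq 1$. The subtle case is $C$ non-diagonalizable with eigenvalue $1$, so I would handle the Jordan form separately, using triangular rather than diagonal structure. Since a polynomial that is nonzero over $\overline F$ is nonzero as a polynomial over $F$, this suffices. I expect this step to be the main obstacle: it requires a clean enough determinant computation, and the case $1-C$ singular but $C\neq1$ needs care.

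\textbf{Step 3: common nonvanishing point.} The product $f_Dg_C$ is a nonzero polynomial of degree $\le 6$. If $F$ is infinite, it has a non-root in $F^4$, which gives $X_0$. If $F$ is finite with $|F|$ small, the standard non-root argument needs $|F|>6$, so small finite fields need an explicit choice. I would handle these by exhibiting $X_0$ directly. Over a finite field $Q\cong M_2(F)$ (Wedderburn), so one can pick $X_0$ among $1$, $E_{11}$, or a companion matrix according to the Jordan type of $C$ and $D$. Here $D=ACA^{-1}$ is conjugate to $C$, so it has the same type.
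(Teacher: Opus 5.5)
Your overall architecture is the paper's non-split argument: show $f_D$ and $g_C$ are nonzero polynomials, multiply them, and use that $F$ is infinite, noting that finite fields force $Q\cong M_2(F)$. There is a genuine gap, though, exactly where the work lies. You must show $g_C\not\equiv0$ when $C\neq1$ has eigenvalue $1$, and your suggested test points provably fail there.

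Suppose $X$ and $C$ share an eigenvector $v$ with $Cv=v$ and $Xv=\mu v$. Choose a nonzero row vector $\varphi$ with $\varphi X=\mu\varphi$. Then $Y=v\varphi\neq0$ satisfies $XY=\mu v\varphi=CYX$, so $g_C(X)=0$.
\begin{itemize}
\item For $C=\operatorname{diag}(1,\lambda)$ and $X$ diagonal in this basis, this happens: your product contains the factor $\mu_1-1\cdot\mu_1=0$. So diagonalizable $C\neq1$ with eigenvalue $1$ is also a bad case, contrary to your claim.
\item The same happens for upper triangular $X$ when $C=\begin{pmatrix}1&1\\0&1\end{pmatrix}$, so the ``triangular structure'' remedy fails as well.
\item $E_{11}$ is useless in Step 3. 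Indeed $g_C(X)=0$ for every singular $X$: take $Y=w\varphi$ with $Xw=0$ and $\varphi X=0$.
\end{itemize}
The missing idea is an invertible $X_0$ sharing no eigenvector with the fixed line of $C$. Your vague ``pure $X$ with $X^2\neq0$'' hint points this way but is never pinned down. The paper takes $X_0=\begin{pmatrix}0&1\\1&0\end{pmatrix}$, which gives $g_C(X_0)=-1$ for the Jordan block and $g_C(X_0)=(\lambda-1)^2$ for $\operatorname{diag}(1,\lambda)$. It takes $X_0=I$ when $1$ is not an eigenvalue of $C$.

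The small-finite-field case is left as a promise, and it is not a formality. There you need one $X_0$ making $f_D$ and $g_C$ nonzero simultaneously, and the natural candidates conflict. If $C\sim\operatorname{diag}(1,-1)$, then $D=ACA^{-1}$ has trace $0$ and determinant $-1$. The swap matrix then gives $f_D(X_0)=-\det(I+D)=0$. The paper needs the extra family $X_{u,t}=\begin{pmatrix}0&1\\u&t\end{pmatrix}$, with $g_C(X_{u,t})=4u^2$ and $f_D(X_{u,t})=t(eu+h-dt)$.

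The paper's split lemma does this case analysis over $F$ itself. It splits according to the eigenvalues of $C$ relative to $1$, conjugating $C$ and $D$ simultaneously. This covers every split $Q$, including all finite fields, and makes your separate Steps 1 and 2 over $\overline F$ unnecessary. Your Step 3 argument is then needed only in the non-split case, where $F$ is automatically infinite.

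One further error is in Step 1. Rank $\le2$ cannot ``force $D=1$'', since $f_1(X)=N(2u)=4N(u)\not\equiv0$. In fact $f_D\not\equiv0$ for every $D$; for example $f_D(u)=N(u)N(1+D)$ when $1+D$ is invertible. So the contradiction must come from elsewhere. This is easily repaired by direct computation, but the argument as written is wrong.
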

	
	The proof will be given as  a consequence of the following auxiliary lemmas. We start with the following, which is a straightforeward consequence of the fect that traces and determinants are invariant under conjugation.
	
	\begin{lemma}[Similarity invariance of $f_D$]
		\label{lem: f invariance}
		Let $D \in M_2(F)$ and define $f_D: M_2(F) \to F$ by
		$$
		f_D(X) = \det(X - \operatorname{adj}(X)D).
		$$
		For any $P \in GL_2(F)$, if $D' = PDP^{-1}$ and $X' = PXP^{-1}$, then
		$$
		f_{D'}(X') = f_D(X).
		$$
		In particular, $f_D \equiv 0$ if and only if $f_{D'} \equiv 0$.
	\end{lemma}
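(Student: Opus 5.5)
The plan is a direct computation that uses only the multiplicativity of the determinant and the fact that $\operatorname{adj}$ is compatible with conjugation. Set $D'=PDP^{-1}$ and $X'=PXP^{-1}$. The first step is to check that
$$\operatorname{adj}(PXP^{-1})=P\,\operatorname{adj}(X)\,P^{-1}.$$
In this paper $\operatorname{adj}(X)=\overline X=\operatorname{tr}(X)I-X$, and since the trace is similarity invariant we get
$$\operatorname{tr}(PXP^{-1})I-PXP^{-1}=P\bigl(\operatorname{tr}(X)I-X\bigr)P^{-1}.$$
(One could also use $\operatorname{adj}(MN)=\operatorname{adj}(N)\operatorname{adj}(M)$ together with $\operatorname{adj}(P^{-1})=\operatorname{adj}(P)^{-1}$, but the trace formula is simpler and avoids any invertibility discussion.)

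Next we compute the argument of the determinant:
$$X'-\operatorname{adj}(X')D'=PXP^{-1}-P\,\operatorname{adj}(X)P^{-1}PDP^{-1}=P\bigl(X-\operatorname{adj}(X)D\bigr)P^{-1}.$$
Taking determinants and using $\det(PMP^{-1})=\det M$ gives $f_{D'}(X')=f_D(X)$.

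For the final assertion, note that $X\mapsto PXP^{-1}$ is a bijection of $M_2(F)$. Hence $f_{D'}=f_D\circ c_P^{-1}$, where $c_P(X)=PXP^{-1}$. It follows that $f_D$ vanishes identically if and only if $f_{D'}$ does.

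No step is a real obstacle. The only point needing care is the compatibility of $\operatorname{adj}$ with conjugation, and the trace formula handles it cleanly.
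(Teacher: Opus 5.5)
Your proof is correct and follows the same approach as the paper, which simply cites the conjugation invariance of trace and determinant. You spell out the details: $\operatorname{adj}(X)=\operatorname{tr}(X)I-X$ commutes with conjugation, so $X'-\operatorname{adj}(X')D'=P\bigl(X-\operatorname{adj}(X)D\bigr)P^{-1}$, and the rest follows from $\det(PMP^{-1})=\det M$ and the bijectivity of $X\mapsto PXP^{-1}$.
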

	
	\begin{lemma}[Similarity invariance of $g_C$]
		\label{lem: g invariance}
		Let $C \in M_2(F)$ and define the linear map $T_X^C: M_2(F) \to M_2(F)$ by
		$$
		T_X^C(Y) = XY - CYX.
		$$
		Define $g_C: M_2(F) \to F$ by $g_C(X) = \det(T_X^C)$. For any $P \in GL_2(F)$, if $C' = PCP^{-1}$ and $X' = PXP^{-1}$, then
		$$
		g_{C'}(X') \neq 0 \iff g_C(X) \neq 0.
		$$
		In particular, $g_C \equiv 0$ if and only if $g_{C'} \equiv 0$.
	\end{lemma}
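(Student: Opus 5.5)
The plan is to write $T_{X'}^{C'}$ as a conjugate of $T_X^C$ by an invertible linear operator on $M_2(F)$. Then the two determinants agree exactly, which is stronger than the equivalence claimed. Let $P\in GL_2(F)$, and define the linear automorphism
\[
\kappa_P:M_2(F)\longrightarrow M_2(F),\qquad \kappa_P(Y)=PYP^{-1},
\]
with inverse $\kappa_{P^{-1}}$. We keep the notation $C'=PCP^{-1}$ and $X'=PXP^{-1}$.

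The key step is the intertwining identity $T_{X'}^{C'}\circ\kappa_P=\kappa_P\circ T_X^C$. Evaluating the left side at $Y$ gives
\[
(PXP^{-1})(PYP^{-1})-(PCP^{-1})(PYP^{-1})(PXP^{-1}).
\]
The inner factors $P^{-1}P$ cancel, so this equals $P(XY-CYX)P^{-1}=\kappa_P\bigl(T_X^C(Y)\bigr)$. Hence
\[
T_{X'}^{C'}=\kappa_P\circ T_X^C\circ\kappa_P^{-1}.
\]
Since the determinant of a linear endomorphism of the four-dimensional $F$-space $M_2(F)$ is invariant under conjugation by an invertible linear map, we get $g_{C'}(X')=\det(T_{X'}^{C'})=\det(T_X^C)=g_C(X)$. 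The equivalence $g_{C'}(X')\neq0\iff g_C(X)\neq0$ follows at once.

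For the final assertion, note that $X\mapsto PXP^{-1}$ is a bijection of $M_2(F)$. Therefore $g_C$ vanishes at every $X$ if and only if $g_{C'}$ vanishes at every $X'$. There is no real obstacle here; the only point requiring care is to regard $\det$ as the determinant of an $F$-linear operator on $M_2(F)$, not of a $2\times2$ matrix, so that the similarity argument is applied in the correct space.
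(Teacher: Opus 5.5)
Your proof is correct and takes the same route as the paper. The paper's terse argument rests on exactly your intertwining identity $T_{X'}^{C'}(PYP^{-1})=P\,T_X^C(Y)\,P^{-1}$, that is, $T_{X'}^{C'}=\kappa_P\circ T_X^C\circ\kappa_P^{-1}$. You go slightly further by stating explicitly that the relevant determinant is that of an operator on the four-dimensional space $M_2(F)$, which yields the exact equality $g_{C'}(X')=g_C(X)$ more cleanly than the paper's appeal to invariance of ``determinants and traces''.
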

	\begin{proof}
		Let $Y' = PYP^{-1}$. As in the previous lemma, we rely on the fact that determinants and traces are invariant under conjugation to prove that \[T_{X'}^{C'}(Y') 
		= P(T_X^C(Y))P^{-1}.\] 
	\end{proof}

	\begin{lemma}
		\label{lem:split quaternion nonvanishing}
		Let $F$ be a field with $\operatorname{char}F\neq2$. Let
		$A\in GL_2(F)$ and $B\in M_2(F)$ with $B\neq A$. Put
		$$
		C=A^{-1}B
		\qquad\text{and}\qquad
		D=BA^{-1}.
		$$
		Then there exists $X_0\in M_2(F)$ such that
		$$
		\det\bigl(X_0-\operatorname{adj}(X_0)D\bigr)\neq0 \qquad \text{and} \qquad
		\det\bigl(T_{X_0}^C\bigr)\neq0,
		$$
		where
		$T_X^C(Y)=XY-CYX.$
	\end{lemma}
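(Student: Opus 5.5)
The plan is to treat $f_D$ and $g_C$ as polynomial functions of the four entries of $X$. Write $X=(x_{ij})$. Then $f_D(X)=\det(X-\operatorname{adj}(X)D)$ is a homogeneous polynomial of degree $2$ in the $x_{ij}$. Likewise $g_C(X)=\det(T_X^C)$ is homogeneous of degree $4$, since $T_X^C$ is linear in $X$ and acts on a $4$-dimensional space. The product $h=f_D\,g_C$ therefore has total degree at most $6$. The proof splits into two steps: first show that $f_D$ and $g_C$ are both nonzero as polynomials, and then produce a point of $F^4$ where $h$ does not vanish.

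\textbf{Step 1: $f_D\not\equiv0$ and $g_C\not\equiv0$.} Whether a polynomial is nonzero can be tested over the algebraic closure $\overline F$. There Lemmas~\ref{lem: f invariance} and~\ref{lem: g invariance} let me conjugate each polynomial separately: I put $D$, respectively $C$, in upper triangular (Jordan) form by some $P\in\GL_2(\overline F)$. This is legitimate because the two polynomials are tested independently, so the fact that $C=A^{-1}B$ and $D=BA^{-1}=ACA^{-1}$ need not be simultaneously triangularizable does no harm. The hypothesis $B\neq A$ gives $C\neq I$ and $D\neq I$.

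For $f_D$, the first test point is $X=I$, which gives $f_D(I)=\det(I-D)$. This is nonzero unless $1$ is an eigenvalue of $D$. In the remaining cases, $D=\operatorname{diag}(1,d)$ with $d\neq1$ or $D=\begin{pmatrix}1&1\\0&1\end{pmatrix}$, I evaluate at a diagonal or antidiagonal $X$ with indeterminate entries and exhibit a nonzero coefficient.

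For $g_C$, with $C$ upper triangular with eigenvalues $c_1,c_2$, I take $X=\operatorname{diag}(s,t)$. Ordering the matrix units suitably makes $T_X^C$ triangular, with diagonal entries of the form $x_i-c_ix_j$ (where $x_1=s$, $x_2=t$). The product of these is a nonzero polynomial in $s,t$ unless some $c_i=1$. If $1$ is an eigenvalue of $C$, so that $C=\operatorname{diag}(1,c)$ or a nontrivial unipotent, I would instead take an antidiagonal $X$, or $X=sI+tE_{21}$. I would then compute the $4\times4$ determinant directly and exhibit a nonzero monomial. This case analysis is the main computational obstacle, particularly the unipotent case $C=I+N$. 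There the naive diagonal choices all give zero, and a genuinely nonsemisimple $X$ is needed.

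\textbf{Step 2: a common nonvanishing point in $F$.} If $F$ is infinite, $h=f_Dg_C$ is a nonzero polynomial over an infinite field, so it has a non-root $X_0\in M_2(F)$. If $F$ is finite with $q$ elements, the Schwartz--Zippel bound shows that $h$ vanishes on at most $6q^3$ of the $q^4$ points. Hence a non-root exists as soon as $q\geq7$.

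The small fields $q=3$ and $q=5$ are what I expect to be the real obstacle. For these I would try to upgrade Step 1 to explicit $F$-rational points: take $X_0$ among $I$, $E_{12}+E_{21}$, $I+E_{21}$ and similar, according to the rational canonical form of $C$ over $F$ itself. Note that $D$ is then automatically similar to $C$ via $A$, so $f_D(X_0)=f_C(A^{-1}X_0A)$ by Lemma~\ref{lem: f invariance}. The remaining difficulty is controlling $A^{-1}X_0A$. If this cannot be done cleanly, the fallback is a direct finite verification over $\mathbb F_3$ and $\mathbb F_5$.
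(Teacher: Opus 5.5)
Your Step~2 leaves a genuine gap. The lemma is claimed for every field with $\operatorname{char}F\neq2$. The Schwartz--Zippel count, however, yields a non-root of $h=f_Dg_C$ only when $6q^3<q^4$, i.e.\ $q\geq7$. For $F=\mathbb F_3$ and $F=\mathbb F_5$ you offer only an unexecuted plan (``control $A^{-1}X_0A$'') or a finite check you do not carry out. These fields matter: over a finite field every quaternion algebra is split, so this lemma is exactly what the paper uses there.

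Step~1 is also only sketched, since the $4\times4$ determinants in the cases where $1$ is an eigenvalue are announced rather than computed. Your claim that the unipotent case needs a non-semisimple $X$ is false. For $C=I+E_{12}$, the matrix $X=E_{12}+E_{21}$, which is diagonalizable since $\operatorname{char}F\neq2$, gives $\det(T^C_X)=-1$.

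The missing idea is that you never need to control $A^{-1}X_0A$. By Lemmas~\ref{lem: f invariance} and~\ref{lem: g invariance} you may conjugate $C$ and $D$ by the same $P\in\GL_2(F)$. So you can put $C$ in normal form over $F$, while $D$ becomes some matrix with $\operatorname{tr}D=\operatorname{tr}C$ and $\det D=\det C$.

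\textbf{If $1$ is not an eigenvalue of $C$}, take $X_0=I$. Then $\det(T^C_I)=\det(I-C)^2$ and $f_D(I)=\det(I-D)=\det(I-C)$ are both nonzero.

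\textbf{If $1$ is an eigenvalue}, both eigenvalues lie in $F$, so over $F$ either $C=I+E_{12}$ or $C=\operatorname{diag}(1,\lambda)$ with $\lambda\neq1$. Now choose $X_0$ of trace zero. Then $\operatorname{adj}(X_0)=-X_0$, so $X_0-\operatorname{adj}(X_0)D=X_0(I+D)$, and $f_D(X_0)=\det(X_0)(1+\operatorname{tr}C+\det C)$ depends only on $C$. With $X_0=E_{12}+E_{21}$:
\begin{itemize}
\item in the unipotent case, $\det(T^C_{X_0})=-1$ and $f_D(X_0)=-4$;
\item in the diagonal case, $\det(T^C_{X_0})=(\lambda-1)^2$ and $f_D(X_0)=-2(1+\lambda)$, which is nonzero unless $\lambda=-1$.
\end{itemize}

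\textbf{In the remaining case $\lambda=-1$}, the paper writes $D=\begin{pmatrix}d&e\\h&-d\end{pmatrix}$ with $d^2+eh=1$ and uses $X_{u,t}=\begin{pmatrix}0&1\\u&t\end{pmatrix}$. For this choice, $\det(T^C_{X_{u,t}})=4u^2$ and $f_D(X_{u,t})=t(eu+h-dt)$. The factor $eu+h-dt$ is a nonconstant affine function of $t$ if $d\neq0$, and of $u$ if $d=0$ (since then $eh=1$). Hence suitable $u,t\in F^\times$ exist whenever $|F^\times|\geq2$.

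This argument is uniform in $F$, including $\mathbb F_3$ and $\mathbb F_5$. It needs neither the passage to $\overline F$ nor any counting.
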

	
	\begin{proof}
		Since $B\neq A$, we have $C\neq I$. Moreover,
		$$
		D=ACA^{-1},
		$$
		so $C$ and $D$ are similar. By
		Lemmas~\ref{lem: f invariance} and~\ref{lem: g invariance}, we may
		replace $C$ and $D$ simultaneously by conjugates. We distinguish
		three cases.
		
		\textbf{Case (1).} Suppose that $1$ is not an eigenvalue of $C$.
		
		Take $X_0=I$. Then
		$$
		T_I^C(Y)=(I-C)Y.
		$$
		Since $I-C$ is invertible,
		$$
		\det(T_I^C)\neq0.
		$$
		Since $D$ is similar to $C$, $1$ is not an eigenvalue of $D$ either.
		Therefore
		$$
		\det(I-D)\neq0.
		$$
		Thus $X_0=I$ satisfies both required conditions.
		
		\textbf{Case (2).} Suppose that $1$ is the only eigenvalue of $C$.
		
		Since $C\neq I$, after conjugation we may assume
		$$
		C=
		\begin{pmatrix}
			1&1\\
			0&1
		\end{pmatrix}.
		$$
		Take
		$$
		X_0=
		\begin{pmatrix}
			0&1\\
			1&0
		\end{pmatrix}.
		$$
		A direct calculation gives
		$$
		\det(T_{X_0}^C)=-1\neq0.
		$$
		Since $D$ is similar to $C$,
		$$
		\operatorname{tr}(D)=2,
		\qquad
		\det(D)=1.
		$$
		Moreover,
		$$
		\operatorname{adj}(X_0)=-X_0.
		$$
		Hence
		$$
		\begin{aligned}
			\det\bigl(X_0-\operatorname{adj}(X_0)D\bigr)
			&=\det\bigl(X_0(I+D)\bigr)\\
			&=\det(X_0)\det(I+D)\\
			&=-\bigl(1+\operatorname{tr}(D)+\det(D)\bigr)\\
			&=-4\neq0,
		\end{aligned}
		$$
		since $\operatorname{char}F\neq2$.
		
		\textbf{Case (3).} Suppose that $1$ is an eigenvalue of $C$ and
		the other eigenvalue is $\lambda\neq1$.
		
		Since the eigenvalues are distinct, after conjugation we may assume
		$$
		C=
		\begin{pmatrix}
			1&0\\
			0&\lambda
		\end{pmatrix}.
		$$
		Suppose first that $\lambda\neq-1$. Take
		$$
		X_0=
		\begin{pmatrix}
			0&1\\
			1&0
		\end{pmatrix}.
		$$
		A direct calculation gives
		$$
		\det(T_{X_0}^C)=(\lambda-1)^2\neq0.
		$$
		Since $D$ is similar to $C$,
		$$
		\operatorname{tr}(D)=1+\lambda,
		\qquad
		\det(D)=\lambda.
		$$
		Therefore,
		$$
		\begin{aligned}
			\det\bigl(X_0-\operatorname{adj}(X_0)D\bigr)
			&=-\det(I+D)\\
			&=-\bigl(1+\operatorname{tr}(D)+\det(D)\bigr)\\
			&=-2(1+\lambda)\neq0.
		\end{aligned}
		$$
		
		It remains to consider $\lambda=-1$. Thus we may assume
		$$
		C=
		\begin{pmatrix}
			1&0\\
			0&-1
		\end{pmatrix}.
		$$
		Since $D$ is similar to $C$, we have
		$$
		\operatorname{tr}(D)=0
		\qquad\text{and}\qquad
		\det(D)=-1.
		$$
		Write
		$$
		D=
		\begin{pmatrix}
			d&e\\
			h&-d
		\end{pmatrix}.
		$$
		\begin{equation}
			d^2+eh=1.
			\label{eq:detD}
		\end{equation}
		
		For $u,t\in F$, set
		$$
		X_{u,t}=
		\begin{pmatrix}
			0&1\\
			u&t
		\end{pmatrix}.
		$$
		A direct calculation gives
		$$
		\det(T_{X_{u,t}}^C)=4u^2
		$$
		and
		$$
		\det\bigl(X_{u,t}-\operatorname{adj}(X_{u,t})D\bigr)
		=
		t(eu+h-dt).
		$$
		
		We choose $u,t\in F^\times$ such that
		$$
		eu+h-dt\neq0.
		$$
		If $d\neq0$, choose any $u\in F^\times$. For this fixed $u$, there
		is at most one value of $t$ satisfying
		$$
		eu+h-dt=0.
		$$
		Since $\operatorname{char}F\neq2$, the set $F^\times$ contains at
		least two elements, and hence we may choose $t\in F^\times$ different
		from this exceptional value.
		
		If $d=0$, then \eqref{eq:detD} gives $eh=1$, so $e,h\neq0$. The equation
		$$
		eu+h=0
		$$
		has the unique solution $u=-h/e$. We may therefore choose
		$$
		u\in F^\times,
		\qquad
		u\neq-\frac{h}{e},
		$$
		and take any $t\in F^\times$.
		
		In either case,
		$$
		\det\bigl(X_{u,t}-\operatorname{adj}(X_{u,t})D\bigr)\neq0
		$$
		and
		$$
		\det(T_{X_{u,t}}^C)=4u^2\neq0.
		$$
		This completes the proof.
	\end{proof}

	The next is a well-known fact.

	\begin{lemma}
		\label{lem: Polynomial non-vanishing over infinite fields}
		Let $F$ be an infinite field. If $0 \neq h \in F[x_1, \ldots, x_n]$, then there exists $(a_1, \ldots, a_n) \in F^n$ such that $h(a_1, \ldots, a_n) \neq 0$.
	\end{lemma}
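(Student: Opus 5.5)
We argue by induction on the number of variables $n$. The only property of $F$ that enters is this: a nonzero polynomial in one variable of degree $d$ has at most $d$ roots in any field, because $F[x]$ is a domain and the factor theorem holds. An infinite field therefore always contains an element that is not a root.

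\textbf{Base case $n=1$.} Let $0\neq h\in F[x_1]$ have degree $d$. Then $h$ has at most $d$ roots in $F$. Since $F$ is infinite, some $a_1\in F$ is not a root, so $h(a_1)\neq 0$.

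\textbf{Inductive step.} Suppose the claim holds for $n-1$ variables, and let $0\neq h\in F[x_1,\dots,x_n]$. Expand $h$ in powers of the last variable:
$$
h=\sum_{i=0}^{d} h_i(x_1,\dots,x_{n-1})\,x_n^{\,i},
$$
where $h_d\neq 0$. By the induction hypothesis there is $(a_1,\dots,a_{n-1})\in F^{n-1}$ with $h_d(a_1,\dots,a_{n-1})\neq0$.

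Substituting these values gives the one-variable polynomial
$$
h(a_1,\dots,a_{n-1},x_n)\in F[x_n].
$$
Its coefficient of $x_n^{\,d}$ is $h_d(a_1,\dots,a_{n-1})\neq 0$, so it is a nonzero polynomial. By the base case there is $a_n\in F$ with $h(a_1,\dots,a_n)\neq0$, which completes the induction.

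\textbf{Main point.} The only step needing care is the choice of coefficient: we take the leading coefficient $h_d$ in $x_n$, since it is guaranteed to be nonzero. This ensures the specialized polynomial in $x_n$ does not vanish identically, so the one-variable argument applies.

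The lemma fails over finite fields. For example, over $\mathbb F_q$ the polynomial $x^q-x$ is nonzero but vanishes at every point. This is presumably why the paper treats finite fields separately, for instance through the explicit choices of $X_0$ in Lemma~\ref{lem:split quaternion nonvanishing}.
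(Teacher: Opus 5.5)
Your proof is correct. The paper gives no proof of this lemma at all, simply calling it a well-known fact, and your induction on $n$ via the leading coefficient in $x_n$ is the standard argument behind it. Your closing remark also fits how the paper uses the lemma: it is invoked only in the non-split quaternion case, where $F$ is automatically infinite, while the split case (which includes all finite fields) is handled by explicit choices of $X_0$.
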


	We are now ready to prove Lemma~\ref{lem:quaternion nonvanishing}.
	\begin{proof}[Proof of Lemma~\ref{lem:quaternion nonvanishing}]
		We distinguish according to whether $Q$ is split.
		
		Suppose first that $Q$ is split. Then $Q\simeq M_2(F),$
		and under this isomorphism the canonical involution corresponds to
		the adjugate and the reduced norm corresponds to the determinant.
		Hence the conclusion follows directly from
		Lemma~\ref{lem:split quaternion nonvanishing}.
		
		Suppose now that $Q$ is non-split. Since every quaternion algebra over
		a finite field is split, the field $F$ is infinite.
		
		Let $K/F$ be a splitting field for $Q$. Then
		$$
		Q_K:=Q\otimes_FK\simeq M_2(K).
		$$
		Under such an isomorphism, the scalar extensions of $f_D$ and $g_C$
		are the corresponding polynomials on $M_2(K)$. Since
		$$
		A^{-1}B\neq1,
		$$
		Lemma~\ref{lem:split quaternion nonvanishing}, applied over $K$,
		gives an element $X_K\in Q_K$ such that
		$$
		(f_D)_K(X_K)\neq0
		\qquad\text{and}\qquad
		(g_C)_K(X_K)\neq0.
		$$
		Therefore
		$$
		(f_D)_K(g_C)_K
		$$
		is a nonzero polynomial on $Q_K$.
		
		Choose an $F$-basis of $Q$, identifying $Q$ with $F^4$. Then
		$$
		h:=f_Dg_C\in F[x_1,x_2,x_3,x_4],
		$$
		and its scalar extension to $K$ is
		$$
		h_K=(f_D)_K(g_C)_K.
		$$
		Since the natural map
		$$
		F[x_1,x_2,x_3,x_4]
		\longrightarrow
		K[x_1,x_2,x_3,x_4]
		$$
		is injective and $h_K\neq0$, it follows that
		$$
		h\neq0.
		$$
		Since $F$ is infinite, Lemma~\ref{lem: Polynomial non-vanishing over infinite fields}
		gives $X_0\in Q$ such that
		$$
		h(X_0)\neq0.
		$$
		Hence
		$$
		f_D(X_0)\neq0
		\qquad\text{and}\qquad
		g_C(X_0)\neq0.
		$$
		This completes the proof.
	\end{proof}
	
	We first consider the case when the subalgebra $T$ generated by $A$
	and $B$ is contained in a quaternion subalgebra of $\mathcal O$.
	
	\begin{proposition}
		\label{prop: T contained in quaternion}
		Let $A,B\in\mathcal O\setminus\{0\}$, where $A$ is invertible and
		$A\neq B$. Let $\Phi = \Phi_{A,B}:\mathcal O \times \mathcal O \longrightarrow \mathcal O$ given by $\Phi(x,y) = A(xy) - B(yx)$. Let $T$ be the subalgebra of $\mathcal O$ generated by
		$A$ and $B$ and suppose that $T$ is contained in a quaternion subalgebra $\mathcal H$ of $\mathcal O$. Then there exists $x_0\in \mathcal H$ such that $y\mapsto \Phi(x_0,y)$  is bijective. In particular, $\Phi$ is surjective.
	\end{proposition}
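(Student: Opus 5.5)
The plan is to decompose $\mathcal O$ along $\mathcal H$ via the Cayley--Dickson construction. Then, for $x_0\in\mathcal H$, the linear map $y\mapsto\Phi(x_0,y)$ becomes block diagonal, and its two blocks are invertible exactly when $g_C(x_0)\neq0$ and $f_D(x_0)\neq0$. Lemma~\ref{lem:quaternion nonvanishing} then supplies a suitable $x_0$.

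\textbf{Setting up the decomposition.} As noted after Proposition~\ref{prop: Cayley-Dickson}, we can choose $\omega\in\mathcal H^\perp$ with $N(\omega)\neq0$ and write $\mathcal O=\mathcal H\oplus\mathcal H\omega$. The multiplication is $(a+b\omega)(a'+b'\omega)=(aa'+\delta\,\overline{b'}b)+(b'a+b\overline{a'})\omega$. Since $T\subseteq\mathcal H$, both $A$ and $B$ lie in $\mathcal H$. The algebra $\mathcal H$ is associative, and $A$ is invertible in $\mathcal H$ because $N(A)\neq0$.

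\textbf{Computing the two blocks.} Fix $x_0\in\mathcal H$ and write $y=a+b\omega$ with $a,b\in\mathcal H$. The multiplication rule gives
\[
x_0y=x_0a+(bx_0)\omega,\qquad yx_0=ax_0+(b\overline{x_0})\omega .
\]
For $h\in\mathcal H$ we also have $h(c+d\omega)=hc+(dh)\omega$. Combining these,
\[
\Phi(x_0,y)=\bigl(A(x_0a)-B(ax_0)\bigr)+\bigl(b\,(x_0A-\overline{x_0}B)\bigr)\omega .
\]
So $y\mapsto\Phi(x_0,y)$ maps $\mathcal H$ to $\mathcal H$ and $\mathcal H\omega$ to $\mathcal H\omega$, and it is bijective iff both blocks are bijective.

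\textbf{Invertibility of each block.} Using associativity of $\mathcal H$, the first block is $a\mapsto A\bigl(x_0a-Cax_0\bigr)=L_A\circ T^C_{x_0}(a)$ with $C=A^{-1}B$. Since $L_A$ is invertible, this block is invertible iff $g_C(x_0)=\det T^C_{x_0}\neq0$. For the second block, set $D=BA^{-1}$; then $x_0A-\overline{x_0}B=(x_0-\overline{x_0}D)A$. This block is right multiplication by that element on $\mathcal H$. Right multiplication is invertible iff the element is invertible, i.e.\ iff $N(x_0-\overline{x_0}D)\,N(A)\neq0$, i.e.\ iff $f_D(x_0)\neq0$.

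\textbf{Conclusion.} Since $A\neq B$, Lemma~\ref{lem:quaternion nonvanishing} applied to the quaternion algebra $\mathcal H$ gives $x_0\in\mathcal H$ with $f_D(x_0)g_C(x_0)\neq0$. For this $x_0$ both blocks are invertible, so $y\mapsto\Phi(x_0,y)$ is bijective and $\Phi$ is surjective. The only delicate step is the bookkeeping in the doubling formula, namely checking the placement of $\overline{x_0}$ and the order of factors in the $\omega$-component. Everything substantive was already done in Lemma~\ref{lem:quaternion nonvanishing}.
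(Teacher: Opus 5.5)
Your proposal is correct and follows the paper's proof. It uses the same Cayley--Dickson splitting $\mathcal O=\mathcal H\oplus\mathcal H\omega$, the same block decomposition of $y\mapsto\Phi(x_0,y)$ into $L_A\circ T^C_{x_0}$ on $\mathcal H$ and right multiplication by $(x_0-\overline{x_0}D)A$ on the $\mathcal H\omega$-component, and the same appeal to Lemma~\ref{lem:quaternion nonvanishing}. You also state explicitly, where the paper leaves it implicit, why invertibility of the two blocks is equivalent to $g_C(x_0)\neq0$ and $f_D(x_0)\neq0$.
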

	
	\begin{proof}
		By Cayley-Dickson process, there exist $\omega\in \mathcal H^\perp$ (not necessarily the element $\omega$ of the model of Section~2) such that $\mathcal O=\mathcal H\oplus \mathcal H\omega$ and $N(\omega)=-\delta\in F^\times$.
		Every element $y$ of $\mathcal O$ has a unique decomposition $y=y_0+y_1\omega$, with $y_i\in \mathcal H$. By Lemma \ref{lem:quaternion nonvanishing},  there exists $x_0\in \mathcal H$ satisfying $g_C(x_0)\neq 0$ and $f_D(x_0) \neq 0$. Here $C = A^{-1}B$ and $D = BA^{-1}$. For such $x_0$ and $y$ as above, we have
		
		Thus,
		$$x_0y=x_0y_0 + (y_1x_0)\omega \qquad \text{ and } \qquad  yx_0=y_0x_0 + (y_1\overline{x_0})\omega$$
		Hence, using the associativity of $\mathcal H$, we have 
		\begin{align*}
			\Phi(x_0,y) &= A(x_0y_0) - B(y_0x_0) + (y_1(x_0A)-y_1(\overline{x_0}B))\omega\\
			& = A(x_0y_0-Cy_0x_0) + (y_1(x_0-\overline{x_0}D)A)\omega
		\end{align*}
		Now, let $z = z_0+z_1\omega \in \mathcal O$, with $z_i\in \mathcal H$. Since $A$ is invertible and $g_C(x_0)\neq 0$, there exists $y_0\in \mathcal H$ such that $A(x_0y_0-Cy_0x_o) = z_0$ and since $f_D(x_0)\neq 0$, there exists $y_1\in \mathcal H$ such that $(y_1(x_0-\overline{x_0}D)A) = z_1$. Hence $z = \Phi(x_0,y)$ and the proposition is proved.
	\end{proof}

	\subsection{Subalgebras not contained in a quaternion subalgebra.}
	
	We split this section in two parts. If $T$ is contained in the subalgebra $\mathcal Q\oplus U\omega$, where $U  = \textrm{span}\{n_0, p_0\}$ or if $T\subseteq 1\cdot F\oplus R$, where $R = n_0\mathcal O\cap \mathcal On_0$ (which is a three-dimensional Heisenberg algebra, as mentioned in \cite[Example~3.7(2)]{MR4812310}).
	
	\subsubsection{Coefficients in $\mathcal Q + U\omega$}
	
	Notice that for any $u,v \in U$, we have $\overline{u}v = 0$.
	
	\begin{proposition}
		\label{prop:unified}
		Let $A,B\in\mathcal O\setminus\{0\}$ with $A$ invertible and $A\neq B$. Suppose $A=a+r\omega$, $B=b+s\omega$ with $a,b\in \mathcal Q$ and $r,s\in U$. Then there is $x_0 \in \mathcal Q$ such that the linear map $\psi_{x_0}(y)  =  \Phi_{A,B}(x_0, y)$ is bijective. In particular $\Phi_{A,B}$ is surjective.
	\end{proposition}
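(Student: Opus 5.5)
The plan is to compute $\psi_{x_0}$ explicitly in the model $\mathcal O=\mathcal Q\oplus\mathcal Q\omega$ and reduce its bijectivity to the $2\times 2$ matrix statement in Lemma~\ref{lem:split quaternion nonvanishing}. Take $x_0\in\mathcal Q$ and write $y=y_0+y_1\omega$. By \eqref{eq:doubling-mult},
$$x_0y=x_0y_0+(y_1x_0)\omega,\qquad yx_0=y_0x_0+(y_1\overline{x_0})\omega.$$
Multiplying on the left by $A=a+r\omega$ and $B=b+s\omega$ and using associativity of $\mathcal Q$, I expect
$$\psi_{x_0}(y)=\Bigl[a x_0y_0-b y_0x_0+\overline{x_0}\,\overline{y_1}\,r-x_0\overline{y_1}\,s\Bigr]+\Bigl[y_1(x_0a-\overline{x_0}b)+r\,\overline{y_0}\,\overline{x_0}-s\,\overline{x_0}\,\overline{y_0}\Bigr]\omega .$$
So $\psi_{x_0}$ is a $2\times2$ block operator on $\mathcal Q\oplus\mathcal Q$. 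The diagonal blocks are $y_0\mapsto a(x_0y_0-Cy_0x_0)$ and $y_1\mapsto y_1(x_0-\overline{x_0}D)a$, with $C=a^{-1}b$ and $D=ba^{-1}$. The off-diagonal blocks only involve $r$ and $s$.

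First I note that $a$ is invertible. Since $r\in U$ has zero second row, $\det r=0$, so $N(A)=\det a$ and hence $\det a\neq 0$.

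Next I use the structure of $U$ to make $\psi_{x_0}$ block triangular with respect to a suitable flag. The off-diagonal terms have the shapes $w\,r$, $w\,s$ (matrices with zero second row, i.e.\ landing in $U$) and $r\,w$, $s\,w$. Together with the relation $\overline u v=0$ for $u,v\in U$, I expect the composite "cross-term" contributions to vanish. The aim is to show that, after choosing ordered bases of $\mathcal Q$ adapted to $U$ (first columns versus second columns), the $8\times 8$ matrix of $\psi_{x_0}$ is block triangular. Then $\det\psi_{x_0}$ equals, up to a nonzero factor, $\det(a)^{2}\det(T^C_{x_0})\cdot N(x_0-\overline{x_0}D)^2\det(a)^2$. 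Concretely, I would show that $\det\psi_{x_0}$, as a polynomial in $x_0$, coincides with the value it takes when $r=s=0$. One way is to rescale $r,s$ by $t\in F^\times$ through a suitable automorphism (or a diagonal conjugation by $\operatorname{diag}(1,t)$ inside $\mathcal Q$ fixing the shape of $U$) and check that the determinant does not depend on $t$.

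If $a\neq b$, Lemma~\ref{lem:split quaternion nonvanishing} applied to the pair $(a,b)$ gives $x_0$ with both $\det(T^C_{x_0})\neq0$ and $\det(x_0-\operatorname{adj}(x_0)D)\neq 0$, so $\psi_{x_0}$ is bijective.

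The main obstacle is the case $a=b$, where then $r\neq s$. Here $C=D=I$, and the diagonal block $y_0\mapsto a(x_0y_0-y_0x_0)$ is never injective. So the cross terms involving $r-s$ must supply the missing rank, and the triangular argument above fails. I would first try $x_0$ equal to a suitable off-diagonal or diagonal matrix, for instance $x_0=E_{21}$ or $x_0=p_0-\overline{p_0}$, combined with \eqref{eq:n0-identities}, after normalizing $r-s\in U\setminus\{0\}$ by a conjugation preserving $U$. I would then compute $\det\psi_{x_0}$ directly, hoping to show that the kernel of the $(0,0)$ block (the centralizer of $x_0$) is mapped isomorphically by the cross term $y_0\mapsto (r-s)\overline{y_0}\,\overline{x_0}$-type contribution onto a complement of the image. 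Should that fail, I would instead fall back on the sandwich bound of Lemma~\ref{lem:sandwich}, $(A-B)\mathcal O\subseteq\operatorname{Im}\Phi$, and complete it with explicit values $\psi_{x_0}(y)$.
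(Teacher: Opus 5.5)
Your case $a\neq b$ is correct and is essentially the paper's argument. The paper solves the $\omega$-component for $y_1$, substitutes, and uses $\overline u v=0$ to kill the composite cross term; that is exactly your block triangularity.

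To make your expectation precise: $A,B,x_0$ lie in the subalgebra $\mathcal Q+U\omega$, and $\pi(a+r\omega)=a$ is a homomorphism whose kernel is the ideal $U\omega$ (Remark~\ref{rem:unified-coverage}). Hence $\psi_{x_0}$ preserves the flag $U\omega\subset\mathcal Q+U\omega\subset\mathcal O$, acting as follows, with $K_{x_0}=x_0a-\overline{x_0}b$:
\begin{itemize}
\item on $U\omega$ by $u\omega\mapsto(uK_{x_0})\omega$;
\item on the middle quotient by $y_0\mapsto ax_0y_0-by_0x_0$;
\item on the top quotient by $y_1\mapsto y_1K_{x_0}$ modulo $U$.
\end{itemize}
So $\det\psi_{x_0}=\det(a)^2\det(T^C_{x_0})\det(K_{x_0})^2$ for every $x_0\in\mathcal Q$. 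Note that $U$ consists of the matrices with zero second \emph{row}, so the adapted splitting is by rows, not columns.

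The genuine gap is the case $a=b$. You say that there the triangular argument fails and the cross terms in $r-s$ must supply the missing rank. But the flag invariance above uses nothing about $a$ and $b$, so it still holds. For $a=b$ it gives
\[
\det\psi_{x_0}=\det(a)^2\det(\operatorname{ad}_{x_0})\det(K_{x_0})^2=0
\]
for \emph{every} $x_0\in\mathcal Q$. Concretely, when $K_{x_0}$ is invertible, $y=1-\bigl((r-s)\overline{x_0}K_{x_0}^{-1}\bigr)\omega$ is a nonzero vector in $\ker\psi_{x_0}$. For example, with $A=1+p_0\omega$, $B=1$, $x_0=p_0$, this is just $\psi_{p_0}(1)=(p_0\overline{p_0})\omega=0$.

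So your candidates $x_0=E_{21}$ and $x_0=p_0-\overline{p_0}$ are bound to fail. In fact the clause ``$x_0\in\mathcal Q$'' in the statement is false in this case.

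Your fallback does not rescue it. $(A-B)\mathcal O=\bigl((r-s)\omega\bigr)\mathcal O$ is only $4$-dimensional, and $\operatorname{Im}\Phi$ is not known to be closed under addition. So adjoining explicit values $\psi_{x_0}(y)$ proves nothing; you need a single $x$ with $\psi_x$ invertible, and it must lie outside $\mathcal Q$.

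This is what the paper does. Writing $r=r_1p_0+r_2n_0$ and $s=s_1p_0+s_2n_0$, it takes $x=1+E_{21}\omega$ if $r_2\neq s_2$, and $x=1+E_{22}\omega$ if $r_2=s_2$. The $8\times 8$ determinants are $4(r_2-s_2)^4\det(a)^2$ and $4(r_1-s_1)^4\det(a)^2$ respectively. So the proposition really holds with $x_0\in\mathcal O$, which is all that Theorem~\ref{thm: split octonion surjectivity-invertible} uses.
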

	
	\begin{remark}
		\label{rem:unified-coverage}
		Let us explain how Propositions~\ref{prop: T contained in quaternion} and~\ref{prop:unified} together cover all the cases of Lemma~\ref{lem: unitary subalgebras}, except the Heisenberg-radical case (4)(c), which is treated in Proposition~\ref{prop:T Heisenberg case} below. By Lemma~\ref{lem: automorphism invariance}, we may replace $T=\langle A,B\rangle$ by any representative of its $\operatorname{Aut}_F(\mathcal O)$-orbit.
		\begin{itemize}
			\item If $\dim_F T\leq 2$, then $T=F+Fc$ for some $c\in T$. By Proposition~\ref{prop: element octonion to quaternion}, $c$ lies in a quaternion subalgebra of $\mathcal O$, and hence so does $T$. So Proposition~\ref{prop: T contained in quaternion} applies.
			\item If $T$ is of type (3)(a), then $T\subseteq F+Fp_0+Fn_0\subseteq\mathcal Q$, and Proposition~\ref{prop: T contained in quaternion} applies with $\mathcal H=\mathcal Q$.
			\item If $T$ is of type (3)(b), then $A=a+r\omega$ and $B=b+s\omega$ with $a,b\in F$ and $r,s\in U$, and Proposition~\ref{prop:unified} applies.
			\item If $T$ is of type (4)(a), that is, $T$ is a quaternion subalgebra of $\mathcal O$, split or not, then Proposition~\ref{prop: T contained in quaternion} applies with $\mathcal H=T$.
			\item If $T$ is of type (4)(b), then $A=a+r\omega$ and $B=b+s\omega$ with $a,b\in\mathcal C\subseteq\mathcal Q$ and $r,s\in U$, and Proposition~\ref{prop:unified} applies.
		\end{itemize}
		Taking $r=s=0$ in Proposition~\ref{prop:unified}, one recovers Proposition~\ref{prop: T contained in quaternion} when $T$ is contained in the split quaternion subalgebra $\mathcal Q=M_2(F)$. However, Proposition~\ref{prop:unified} does not cover the case in which $T$ is a division quaternion subalgebra $\mathcal H$ of $\mathcal O$, not even after applying an automorphism of $\mathcal O$. Indeed, since $\overline uv=0$ for all $u,v\in U$, and $U$ is the set of matrices of $\mathcal Q$ with zero second row, it follows from \eqref{eq:doubling-mult} that
		$$
		(a+r\omega)(b+s\omega)=ab+(sa+r\overline b)\omega,\qquad sa+r\overline b\in U,
		$$
		for all $a,b\in\mathcal Q$ and $r,s\in U$. So $\mathcal Q+U\omega$ is a subalgebra of $\mathcal O$ and $\pi(a+r\omega)=a$ defines an algebra homomorphism $\pi:\mathcal Q+U\omega\to\mathcal Q$. If $\sigma(T)\subseteq\mathcal Q+U\omega$ for some $\sigma\in\operatorname{Aut}_F(\mathcal O)$, with $T$ a division algebra, then $\pi$ is injective on $\sigma(T)$ (its kernel is an ideal of the division algebra $\sigma(T)$), and $\pi(\sigma(T))$ is a $4$-dimensional subalgebra of $\mathcal Q=M_2(F)$, that is, $\pi(\sigma(T))=M_2(F)$. This is impossible, since $M_2(F)$ is not a division algebra. This is why Proposition~\ref{prop: T contained in quaternion}, whose proof uses the non-split part of Lemma~\ref{lem:quaternion nonvanishing}, is needed for this case.
	\end{remark}
	
	\begin{proof}[Proof of \cref{prop:unified}]
		For $u,v\in U=Fp_0+Fn_0$ we have $\overline uv=0$; in particular
		\begin{equation}
			\overline rr=\overline rs=\overline sr=\overline ss=0. \label{eq:vanishing-U}
		\end{equation}
		Since $N(u)=0$ for $u\in U$, $N(A)=N(a+r\omega)=N(a)-N(r)=N(a)$. Since $A$ is invertible, $a\in \mathcal Q$ is also invertible.
		
		\textbf{Case (1): $a\neq b$.} By \cref{lem:quaternion nonvanishing}, there is $x_0\in \mathcal Q$ with $x_0-\overline{x_0}ba^{-1}$ invertible and $T_{x_0}:\mathcal Q\to \mathcal Q$, $T_{x_0}(Y)=(x_0Y)-a^{-1}b(Yx_0)$, invertible. Put $K_{x_0}=x_0a-\overline{x_0}b=(x_0-\overline{x_0}ba^{-1})a$, invertible. Let $Z=Z_0+Z_1\omega\in\mathcal O$ be arbitrary; take $x=x_0$, $y=y_0+y_1\omega$. Using \eqref{eq:doubling-mult},
		\begin{align*}
			A(xy)&=ax_0y_0+\overline{x_0}\,\overline{y_1}r+\bigl(y_1x_0a+r\overline{y_0}\,\overline{x_0}\bigr)\omega,\\
			B(yx)&=by_0x_0+x_0\overline{y_1}s+\bigl(y_1\overline{x_0}b+s\overline{x_0}\,\overline{y_0}\bigr)\omega,
		\end{align*}
		so $A(xy)-B(yx)=Z$ is the system
		\begin{align}
			Z_0&=ax_0y_0-by_0x_0+\overline{x_0}\,\overline{y_1}r-x_0\overline{y_1}s, \label{eq:Z0}\\
			Z_1&=y_1K_{x_0}+r\overline{y_0}\,\overline{x_0}-s\overline{x_0}\,\overline{y_0}. \label{eq:Z1}
		\end{align}
		From \eqref{eq:Z1}, $y_1=(Z_1-r\overline{y_0}\,\overline{x_0}+s\overline{x_0}\,\overline{y_0})K_{x_0}^{-1}$, and conjugating, $\overline{y_1}=\overline{K_{x_0}}^{-1}(\overline{Z_1}-x_0y_0\overline r+y_0x_0\overline s)$. Substituting into \eqref{eq:Z0} and using \eqref{eq:vanishing-U} to drop the cross terms,
		$$
		ax_0y_0-by_0x_0=Z_0-\overline{x_0}\,\overline{K_{x_0}}^{-1}\overline{Z_1}r+x_0\overline{K_{x_0}}^{-1}\overline{Z_1}s=:W_0,
		$$
		i.e.\ $aT_{x_0}(y_0)=W_0$; as $T_{x_0}$ is invertible, this determines $y_0$, and then $y_1$ from \eqref{eq:Z1}. (When $r=s=0$, this simplifies to $y_1=Z_1K_{x_0}^{-1}$ and $aT_{x_0}(y_0)=Z_0$, recovering the quaternion-contained case of the Remark above with no further computation.) So $\Phi$ is surjective.
		
		\textbf{Case (2): $a=b$.} Write $r=r_1p_0+r_2n_0$, $s=s_1p_0+s_2n_0$; since $A\neq B$ and $a=b$, $r\neq s$, so $(r_1-s_1,r_2-s_2)\neq(0,0)$. For fixed $x\in\mathcal O$, let $T_x(y)=A(xy)-B(yx)$; we exhibit $x$ with $T_x$ invertible. Write $a=\left(\begin{smallmatrix}a_{11}&a_{12}\\a_{21}&a_{22}\end{smallmatrix}\right)\in \mathcal Q=M_2(F)$.
		
		If $r_2\neq s_2$: take $x=1+E_{21}\omega$. With respect to the ordered basis
		$$
		\mathcal B=(E_{11},E_{12},E_{21},E_{22},E_{11}\omega,E_{12}\omega,E_{21}\omega,E_{22}\omega)
		$$
		and coordinates $(u,v,w,z,p,q,m,n)$ for $y=\left(\begin{smallmatrix}u&v\\w&z\end{smallmatrix}\right)+\left(\begin{smallmatrix}p&q\\m&n\end{smallmatrix}\right)\omega$, the matrix of $T_x$ with respect to $\mathcal B$ is
		{\footnotesize
			\[M_{21}=
			\begin{pmatrix}
				0 & -r_1-s_1 & 0 & 0 & 2a_{12} & -a_{11} & 0 & r_1-s_1\\
				0 & -r_2-s_2 & 0 & 0 & 0 & a_{12} & 0 & r_2-s_2\\
				s_1 & 0 & 0 & -r_1 & 2a_{22} & -a_{21} & -r_1+s_1 & 0\\
				s_2 & 0 & 0 & -r_2 & 0 & a_{22} & -r_2+s_2 & 0\\
				0 & 0 & -r_2+s_2 & r_1-s_1 & -r_2-s_2 & s_1 & 0 & 0\\
				r_2-s_2 & -r_1+s_1 & 0 & 0 & 0 & -r_2 & 0 & 0\\
				-a_{11} & -2a_{21} & 0 & a_{11} & 0 & 0 & 0 & 0\\
				-a_{12} & -2a_{22} & 0 & a_{12} & 0 & 0 & 0 & 0
			\end{pmatrix},\]}
		and a direct computation gives $\det(M_{21})=4(r_2-s_2)^4\det(a)^2\neq0$.
		
		If $r_2=s_2$ (so $r_1\neq s_1$): take $x=1+E_{22}\omega$; with the same basis $\mathcal B$ and coordinates, the matrix of $T_x$ is
		{\footnotesize
			\[M_{22}=
			\begin{pmatrix}
				r_1 & 0 & 0 & -s_1 & -a_{11} & 0 & 0 & r_1-s_1\\
				r_2 & 0 & 0 & -s_2 & a_{12} & -2a_{11} & 0 & r_2-s_2\\
				0 & 0 & r_1+s_1 & 0 & -a_{21} & 0 & -r_1+s_1 & 0\\
				0 & 0 & r_2+s_2 & 0 & a_{22} & -2a_{21} & -r_2+s_2 & 0\\
				0 & 0 & -r_2+s_2 & r_1-s_1 & r_1 & 0 & 0 & 0\\
				r_2-s_2 & -r_1+s_1 & 0 & 0 & -s_2 & r_1+s_1 & 0 & 0\\
				a_{21} & 0 & -2a_{11} & -a_{21} & 0 & 0 & 0 & 0\\
				a_{22} & 0 & -2a_{12} & -a_{22} & 0 & 0 & 0 & 0
			\end{pmatrix},\]}
		
		and $\det(M_{22})=4(r_1-s_1)^4\det(a)^2\neq0$.
		
		In either case $T_x$ is invertible (note the formulas remain valid, and nonzero, when $a=\alpha I$ is scalar (i.e., the case $T=F+(Fp_0+Fn_0)\omega$) since then $\det(a)=\alpha^2\neq0$), so for every $Z\in\mathcal O$ there is $y\in\mathcal O$ with $A(xy)-B(yx)=Z$.
	\end{proof}

	\subsubsection{The Heisenberg-radical case}
	
	It remains to treat the case in which  $T$ is contained in the $\operatorname{Aut}_F(\mathcal O)$-orbit of $F+(n_0\mathcal O\cap\mathcal On_0)$. By \cite[Example~3.7(b)]{MR4812310},
	$$
	R:=n_0\mathcal O\cap\mathcal On_0=Fn_0+(Fn_0+F\overline{p_0})\omega,
	$$
	is a three-dimensional Heisenberg subalgebra of $\mathcal O$.
	
	\begin{proposition}
		\label{prop:T Heisenberg case}
		Let $A,B\in\mathcal O\setminus\{0\}$ with $A$ invertible and $A\neq B$, and suppose $T=\langle A,B\rangle$ is in the $\operatorname{Aut}_F(\mathcal O)$-orbit of $F+(n_0\mathcal O\cap\mathcal On_0)$. Then there exists $x_0\in\mathcal O$ such that $\psi_{x_0}$ is bijective. In particular, $\Phi$ is surjective.
	\end{proposition}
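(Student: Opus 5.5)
The plan is to reduce to an explicit normal form and then exhibit $x_0$ directly, as in Case (2) of Proposition~\ref{prop:unified}. By Lemma~\ref{lem: automorphism invariance} we may assume $T\subseteq F+R$, where $R=Fn_0+(Fn_0+F\overline{p_0})\omega$. Then
$$A=\alpha 1+\rho,\qquad B=\beta 1+\tau,\qquad \alpha,\beta\in F,\quad \rho,\tau\in R.$$
Every element of $R$ has trace zero and norm zero, and $R$ lies in the radical of $N|_{F+R}$. So $N(\alpha+\rho)=\alpha^2$. Since $A$ is invertible, $\alpha\neq0$.

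\textbf{Step 1: the case $\alpha\neq\beta$.} Here $N(A-B)=(\alpha-\beta)^2\neq0$, so $A-B$ is invertible. Taking $x_0=1$ gives $\psi_{1}(y)=(A-B)y$, which is bijective (this is Lemma~\ref{lem: N(A-B) nonzero}).

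\textbf{Step 2: the case $\alpha=\beta$.} Now $\rho\neq\tau$, and
$$\Phi(x,y)=\alpha(xy-yx)+\rho(xy)-\tau(yx).$$
Write
$$\rho=\rho_1n_0+(\rho_2n_0+\rho_3\overline{p_0})\omega,\qquad \tau=\tau_1n_0+(\tau_2n_0+\tau_3\overline{p_0})\omega,$$
and put $d_i=\rho_i-\tau_i$, so that $(d_1,d_2,d_3)\neq0$. Following Case (2) of Proposition~\ref{prop:unified}, I would try $x_0$ of the form $1+E_{ij}\omega$ or $1+E_{ij}+E_{kl}\omega$. The constant $1$ is needed because $\Phi(1,y)=(A-B)y=(\rho-\tau)y$, and this already accounts for a large part of the rank. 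For each such $x_0$, compute the $8\times8$ matrix of $\psi_{x_0}$ in the basis $\mathcal B$ using \eqref{eq:doubling-mult}. Then split into subcases according to the first nonzero $d_i$, in the order $d_2$, then $d_3$, then $d_1$. In each subcase the aim is a determinant of the shape $c\,\alpha^{k}d_i^{m}$ with $c$ a power of $2$ (up to sign), which is nonzero because $\operatorname{char}F\neq2$. The identities \eqref{eq:n0-identities} and $\overline{u}v=0$ for $u,v\in U$ should simplify many entries.

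\textbf{Main obstacle.} The hard part is choosing $x_0$ so that the determinant does not vanish identically in some subcase. In particular, when only $d_1\neq0$, the difference $\rho-\tau=d_1n_0$ lies in $\mathcal Q$. In that subcase the $\omega$-part of $x_0$ must be chosen to interact with $n_0$; guided by $n_0E_{21}=p_0$, I would try $x_0=1+E_{21}\omega$ first. If a single $x_0$ fails in some subcase, I would instead take a generic $x_0=1+t\,E_{21}\omega+u\,E_{22}\omega$. Then $\det\psi_{x_0}$ is a polynomial in $t,u$, and it suffices to find a point where it is nonzero, taking care over finite fields as in the proof of Lemma~\ref{lem:split quaternion nonvanishing}.
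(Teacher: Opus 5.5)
Your reduction, Step 1, and the ordering $d_2,d_3,d_1$ in Step 2 all match the paper's proof. The gap is that Step 2 never commits to an $x_0$, and that choice together with its determinant is the whole content of the argument. For the first two subcases your family does contain working elements. The paper takes $x_0=1+E_{21}\omega$ when $d_2\neq0$, giving $\det\psi_{x_0}=4\alpha^4d_2^4$, and $x_0=1+E_{11}\omega$ when $d_2=0\neq d_3$, giving $\det\psi_{x_0}=4\alpha^4d_3^4$.

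More seriously, in the subcase you single out ($d_2=d_3=0\neq d_1$), the candidates you name are provably singular. Write $\psi_x(y)=B(xy-yx)+(A-B)(xy)$. When $A-B=d_1n_0$, any $y$ that commutes with $x$ and satisfies $n_0(xy)=0$ lies in $\ker\psi_x$. Take $x=1+s\omega$ with $s=tE_{21}+uE_{22}$, and $y=E_{22}\omega$. By \eqref{eq:doubling-mult}, $xy=E_{11}s+E_{22}\omega=E_{22}\omega$ and $yx=\overline{s}E_{22}+E_{22}\omega=E_{22}\omega$. Hence $\psi_x(E_{22}\omega)=d_1\,n_0(E_{22}\omega)=d_1(E_{22}n_0)\omega=0$. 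So $\det\psi_x$ vanishes identically in $(t,u)$: $1+E_{21}\omega$ fails, and the generic fallback cannot help over any field. Your heuristic that the constant $1$ is needed is what misleads you here. The paper uses $x_0=E_{21}+\omega$, which has no constant term; its determinant reduces to $-16\alpha^6d_1^2$ in this subcase.

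The cleaner repair is to notice that this subcase cannot occur under the stated hypothesis. You lost that information when you weakened $T=F+R$ to $T\subseteq F+R$. By \eqref{eq:doubling-mult}, $\rho\tau=(\rho_2\tau_3-\rho_3\tau_2)n_0$. Also $T\subseteq F+F\rho+F\tau+F\rho\tau$ by Lemma~\ref{lem:two generated subalgebra}, since $AB=\alpha^2+\alpha(\rho+\tau)+\rho\tau$. So $\dim T=4$ forces $\rho_2\tau_3-\rho_3\tau_2\neq0$, hence $(d_2,d_3)\neq(0,0)$. Only the two subcases handled by $1+E_{21}\omega$ and $1+E_{11}\omega$ are then needed. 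The paper's third subcase is vacuous under the hypothesis; it only serves to make the argument valid for all $A,B\in F+R$.
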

	\begin{proof}
		By \cref{lem: automorphism invariance} we may assume $T=F+R$ directly, and write
		$$
		A=a+\alpha n_0+(\beta n_0+\gamma\overline{p_0})\omega,\qquad B=b+\alpha'n_0+(\beta'n_0+\gamma'\overline{p_0})\omega,
		$$
		with $a,b,\alpha,\alpha',\beta,\beta',\gamma,\gamma'\in F$.
		
		Since $R$ is a Heisenberg subalgebra, it is totally singular ($N\equiv0$ on $R$) and contained in $1^\perp$ \cite[5.11(c)(i)]{MR4812310}: in particular $N(r,1)=0$ for $r\in R$, and polarizing $N(r+r')=N(r)+N(r')+N(r,r')$ gives $N(r,r')=0$ for all $r,r'\in R$. Hence, for $c\in F$, $r\in R$,
		$$
		N(c+r)=c^2N(1)+cN(r,1)+N(r)=c^2.
		$$
		In particular $N(A)=a^2\neq0$, as $A$ is invertible.
		
		\textbf{Case (1): $a\neq b$.} Then $A-B=(a-b)+r$ for some $r\in R$, so $N(A-B)=(a-b)^2\neq0$; by \cref{lem: N(A-B) nonzero}, $x_0=1$ works.
		
		\textbf{Case (2): $a=b$.} Then $(\alpha-\alpha',\beta-\beta',\gamma-\gamma')\neq(0,0,0)$; for fixed $x\in\mathcal O$ let $T_x:=\psi_x$, i.e.\ $T_x(y)=A(xy)-B(yx)$, and we exhibit $x$ with $T_x$ invertible. With respect to the basis $(E_{11},E_{12},E_{21},E_{22},E_{11}\omega,E_{12}\omega,E_{21}\omega,E_{22}\omega)$,
		$$
		\det\bigl([T_{1+E_{21}\omega}]\bigr)=4a^4(\beta-\beta')^4,\qquad \det\bigl([T_{1+E_{11}\omega}]\bigr)=4a^4(\gamma-\gamma')^4,
		$$
		$$
		\det\bigl([T_{E_{21}+\omega}]\bigr)=-16a^4\bigl(a(\alpha-\alpha')+a(\gamma-\gamma')+\beta\gamma'-\beta'\gamma\bigr)^2,
		$$
		by direct computation. So: if $\beta\neq\beta'$, take $x=1+E_{21}\omega$; if $\beta=\beta'$ and $\gamma\neq\gamma'$, take $x=1+E_{11}\omega$; if $\beta=\beta'$ and $\gamma=\gamma'$ (so $\alpha\neq\alpha'$, and the third determinant reduces to $-16a^6(\alpha-\alpha')^2$), take $x=E_{21}+\omega$. In each case $\det[T_x]\neq0$ since $a\neq0$ and $\operatorname{char}F\neq2$, so $x_0:=x$ has $\psi_{x_0}=T_x$ bijective.
	\end{proof}

	\subsubsection{The main theorem for split octonion algebras}
	
	We are finally in position to state and prove the main theorem of this section.
	\begin{theorem}
		\label{thm: split octonion surjectivity-invertible}
		Let $\mathcal O$ be a split octonion algebra over $F$, $\operatorname{char}F\neq2$. If $A\in\mathcal O^\times$ and $B\in\mathcal O\setminus\{0\}$ with $A\neq B$, then there exists $x_0\in\mathcal O$ such that $\psi_{x_0}:y\mapsto\Phi_{A,B}(x_0,y)$ is bijective. In particular, $\Phi_{A,B}$ is surjective.
	\end{theorem}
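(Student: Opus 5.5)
The plan is to assemble the case analysis already carried out in this section. First reduce via Lemma~\ref{lem: automorphism invariance}. For $\sigma\in\operatorname{Aut}_F(\mathcal O)$ we have $\sigma(\Phi_{A,B}(x,y))=\Phi_{\sigma(A),\sigma(B)}(\sigma(x),\sigma(y))$. Hence if $\psi_{x_0}$ is bijective for the pair $(\sigma(A),\sigma(B))$, then for the original pair the map $y\mapsto\Phi_{A,B}(\sigma^{-1}(x_0),y)$ equals $\sigma^{-1}\circ\psi_{x_0}\circ\sigma$. This map is bijective, so the existence of a suitable $x_0$ is invariant under automorphisms. We may therefore replace $T=\langle A,B\rangle$ by any representative of its orbit.

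Since $A$ is invertible, $T$ is unital: the Cayley--Hamilton identity gives $1=N(A)^{-1}(Tr(A)1-A)A\in T$. By Lemma~\ref{lem:two generated subalgebra}, $\dim_F T\le 4$, so Lemma~\ref{lem: unitary subalgebras} applies. We then go through its list exactly as in Remark~\ref{rem:unified-coverage}.
\begin{itemize}
\item If $\dim T\le 2$, then $T=F+Fc$, and $c$ lies in a quaternion subalgebra by Proposition~\ref{prop: element octonion to quaternion}. Proposition~\ref{prop: T contained in quaternion} then gives $x_0$.
\item Type (3)(a) lies in $\mathcal Q$, and type (4)(a) is itself a quaternion subalgebra, split or not. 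Proposition~\ref{prop: T contained in quaternion} again gives $x_0$.
\item Types (3)(b) and (4)(b) lie in $\mathcal Q+U\omega$, after applying an automorphism. Here Proposition~\ref{prop:unified} gives $x_0$.
\item Type (4)(c), the Heisenberg-radical case, is handled by Proposition~\ref{prop:T Heisenberg case}.
\end{itemize}
Each of these propositions assumes only $A$ invertible, $B\neq0$ and $A\neq B$, which are our hypotheses. So in every case some $\psi_{x_0}$ is bijective, and surjectivity of $\Phi_{A,B}$ follows.

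The main point requiring care is exhaustiveness, together with the hypotheses of the cited propositions. The classification in Lemma~\ref{lem: unitary subalgebras} is stated up to automorphism, so each type must genuinely place $\sigma(A)$ and $\sigma(B)$ in the form required. In particular, Proposition~\ref{prop:unified} needs $A=a+r\omega$ with $r\in U$, and the orbit of $\mathcal C+(Fp_0+Fn_0)\omega$ gives exactly this.

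The case $B\in FA$, which was set aside at the start of the section, is also covered:
\begin{itemize}
\item If $B=\alpha A$ with $\alpha\neq1$, then $T=F+FA$ has dimension at most $2$, so it falls under Proposition~\ref{prop: T contained in quaternion}.
\item Alternatively, $x_0=1$ works directly, since $\psi_1=(1-\alpha)L_A$ is bijective.
\end{itemize}
Hence no additional argument is needed beyond invoking the earlier results.
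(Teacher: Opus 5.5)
Your proposal is correct and follows the paper's proof: you reduce modulo $\operatorname{Aut}_F(\mathcal O)$, note that $T=\langle A,B\rangle$ is unital of dimension at most $4$, and dispatch the cases of Lemma~\ref{lem: unitary subalgebras} via Propositions~\ref{prop: T contained in quaternion}, \ref{prop:unified} and~\ref{prop:T Heisenberg case}, exactly as in Remark~\ref{rem:unified-coverage}. Two points the paper leaves implicit are made explicit in your version, which is welcome: the conjugation formula $\sigma^{-1}\circ\psi_{x_0}\circ\sigma$, which shows that existence of a bijective $\psi_{x_0}$ is automorphism-invariant, and the check that the case $B\in FA$, which the statement does not exclude, is covered.
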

	\begin{proof}
		Let $T$ be the subalgebra of $\mathcal O$ generated by $A$ and $B$. As noted above, $T$ is unital, associative, and $\dim T\leq4$, so \cref{lem: unitary subalgebras} applies. If $T$ is not of Heisenberg-radical type (case (4)(c)), then, as explained in Remark~\ref{rem:unified-coverage}, up to the action of $\operatorname{Aut}_F(\mathcal O)$, either $T$ is contained in a quaternion subalgebra of $\mathcal O$ (split or not), and Proposition~\ref{prop: T contained in quaternion} gives $x_0$ with $\psi_{x_0}$ bijective, or $A$ and $B$ satisfy the hypotheses of Proposition~\ref{prop:unified}, which gives the same conclusion. If $T$ is of Heisenberg-radical type, \cref{prop:T Heisenberg case} does the same. Since the existence of such an $x_0$ is invariant under $\operatorname{Aut}_F(\mathcal O)$ (\cref{lem: automorphism invariance}), this covers every case.
	\end{proof}
	
	\section{Both coefficients are zero divisors}
	\label{sec:zero-divisors}
	
	In this section, we determine when $\Phi$ is surjective in the case where both $A$ and $B$ are nonzero zero divisors.
	
	\begin{proposition}
		\label{prop:both zero divisors}
		Let $A$ and $B$ be non-zero zero divisors of $\mathcal O$. Define the map $\Phi: \mathcal O \times \mathcal O\longrightarrow \mathcal O$ given by $\Phi(x,y)=A(xy)-B(yx)$ and $V=\operatorname{span}_F\operatorname{Im}(\Phi)$. Then
		$$
		\dim_F V=
		\begin{cases}
			6,&\text{if $A$ and $B$ are linearly independent and $N(A,B)=0$},\\
			4,&\text{if $A$ and $B$ are linearly dependent and $N(A,B)=0$},\\
			8,&\text{if $N(A,B)\neq 0$}.
		\end{cases}
		$$
	\end{proposition}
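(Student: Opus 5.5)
The plan is to first reduce $V$ to a sum of two left ideals, and then compute dimensions case by case. Since $\Phi$ is bilinear, $V$ is spanned by the symmetric and antisymmetric parts of $\Phi$:
$$
\Phi(x,y)+\Phi(y,x)=(A-B)(xy+yx),\qquad \Phi(x,y)-\Phi(y,x)=(A+B)(xy-yx).
$$
Three facts identify these two parts. Taking $y=1$ shows that $xy+yx$ runs over all of $\mathcal O$. By Lemma~\ref{lem:trace-zero-commutator}, $xy-yx$ runs over $\mathcal O_0$. Since $\operatorname{char}F\neq2$, $V$ is spanned by the two parts. Hence
$$
V=(A-B)\mathcal O+(A+B)\mathcal O_0 .
$$
Polarizing the norm gives $N(A\pm B)=N(A)+N(B)\pm N(A,B)=\pm N(A,B)$. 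I will then split into the three cases of the statement.

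\emph{Case $N(A,B)\neq0$.} Here $A-B$ is invertible, so $V=\mathcal O$ already by Lemma~\ref{lem: N(A-B) nonzero}, and $\dim V=8$.

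\emph{Case $A,B$ linearly dependent.} Write $B=\lambda A$. Then automatically $N(A,B)=2\lambda N(A)=0$, and $A\pm B=(1\pm\lambda)A$. If $\lambda\neq1$, then $(A-B)\mathcal O=A\mathcal O$. If $\lambda=1$, then $V=2A\mathcal O_0=A\mathcal O$ by Proposition~\ref{prop: zero-divisor commutator}. (The case $\lambda=-1$ is covered by $\lambda\neq1$.) In all subcases $V=A\mathcal O$, which has dimension $4$ by Proposition~\ref{Image_of_L_A}.

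\emph{Case $A,B$ independent with $N(A,B)=0$.} Now $A+B$ and $A-B$ are nonzero zero divisors. Proposition~\ref{prop: zero-divisor commutator} gives $(A+B)\mathcal O_0=(A+B)\mathcal O$, so
$$
V=(A-B)\mathcal O+(A+B)\mathcal O=A\mathcal O+B\mathcal O,
$$
again using $\operatorname{char}F\neq2$. Each of $A\mathcal O$ and $B\mathcal O$ has dimension $4$, so $\dim V=8-\dim(A\mathcal O\cap B\mathcal O)$. It therefore remains to show $\dim(A\mathcal O\cap B\mathcal O)=2$.

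This intersection is the main obstacle. Conceptually, it is the triality statement that two distinct isotropic points which are orthogonal correspond to maximal totally isotropic subspaces of the same family meeting in a line. I would instead verify it concretely in the model $\mathcal O=\mathcal Q+\mathcal Q\omega$.

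\emph{Lower bound.} Use $\operatorname{Im}L_A=\ker L_{\overline A}$. The inclusion $\subseteq$ holds since $\overline A(Ax)=N(A)x=0$, and equality follows from dimensions, as in the proof of Proposition~\ref{Image_of_L_A}(5). Then consider candidates such as $A(\overline B x)$ and $B(\overline A x)$, and use the Moufang identities together with $N(A,B)=0$ to show these produce a two-dimensional common subspace. If this becomes messy, I would fall back on coordinates: use the transitivity of $\operatorname{Aut}_F(\mathcal O)$ on nonzero zero divisors of a fixed trace (via Lemma~\ref{lem: automorphism invariance}) to normalize $A$ to $p_0$ (trace nonzero, after scaling) or $n_0$ (trace zero). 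Then $B$ ranges over the $6$-dimensional space $A^\perp$ intersected with the null cone, and I would compute $A\mathcal O\cap B\mathcal O$ explicitly.

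\emph{Upper bound.} I must rule out $\dim(A\mathcal O\cap B\mathcal O)\geq3$, i.e.\ show that this intersection cannot be $3$- or $4$-dimensional. The approach is to show that $A\mathcal O=B\mathcal O$ forces $FA=FB$. For instance, $A\mathcal O=B\mathcal O$ implies $\ker L_{\overline A}=\ker L_{\overline B}$, and from the normal form this kernel determines the line $F\overline A$. The value $3$ is excluded by the parity of intersections of maximal isotropic subspaces in the same family; in the normalized coordinates this amounts to a direct rank computation.
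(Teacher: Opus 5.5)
Your reduction is correct, and it takes a genuinely different route from the paper's. You split $\Phi$ into its symmetric part $(A-B)(xy+yx)$ and its antisymmetric part $(A+B)(xy-yx)$. This gives $V=(A-B)\mathcal O+(A+B)\mathcal O_0$ for all $A,B$. In the independent, isotropic, orthogonal case, Proposition~\ref{prop: zero-divisor commutator} then turns this into $V=A\mathcal O+B\mathcal O$.

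The paper instead computes $V^\perp$ directly. Using $N(ca,b)=N(a,\overline{c}b)$ and $N(ac,b)=N(a,b\overline{c})$, it shows that $z\perp V$ forces $\overline{A}z=\overline{B}z\in F1$, and this element must then vanish. Hence $V^\perp=\ker L_{\overline{A}}\cap\ker L_{\overline{B}}=A\mathcal O\cap B\mathcal O$. The two statements are dual to each other, since each $A\mathcal O$ is its own orthogonal complement. Both reduce the proposition to the same crux, $\dim(A\mathcal O\cap B\mathcal O)=2$. Your version identifies $V$ itself and handles all three cases from one formula. The paper's version avoids the commutator lemma.

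\textbf{The gap is that final dimension count.} The paper does not prove it; it quotes \cite[Theorem~3.6]{MR4812310}. You announce a verification but do not carry it out, and the sketch does not close as stated.

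\emph{The Moufang idea gives an inclusion, not a dimension.} Since $\overline{B}A\overline{B}=N(A,B)\overline{B}-N(B)\overline{A}=0$, left Moufang yields $\overline{B}(A(\overline{B}x))=0$. So $A(\overline{B}\mathcal O)\subseteq A\mathcal O\cap B\mathcal O$. However, the individual elements $A\overline{B}$ and $B\overline{A}$ do not help. We have $B\overline{A}=-A\overline{B}$ because $Tr(A\overline{B})=N(A,B)=0$, and both vanish for $A=E_{11}$, $B=E_{21}$. For the whole subspace, $\ker L_A=\overline{A}\mathcal O$ gives $\dim A(\overline{B}\mathcal O)=4-\dim(\overline{A}\mathcal O\cap\overline{B}\mathcal O)$. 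That is the same unknown, now for the pair $(\overline{A},\overline{B})$.

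\emph{The upper bound is where the real content lies.} The lower bound follows only once the upper bound $\leq 2$ is known for all such pairs. Your parity argument needs that all left ideals $a\mathcal O$ lie in one family of maximal totally isotropic subspaces. Excluding dimension $4$ needs injectivity of $Fa\mapsto a\mathcal O$. These are exactly the van der Blij--Springer and Knarr--Stroppel facts; nothing established in your proposal yields them.

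\emph{The coordinate fallback is also unexecuted, and has two slips.} $A^\perp$ is $7$-dimensional, not $6$. Transitivity of $\operatorname{Aut}_F(\mathcal O)$ on zero divisors of fixed trace is an external fact, not a consequence of Lemma~\ref{lem: automorphism invariance}.

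To close the gap, either cite the theorem as the paper does, or prove the upper bound. Once the upper bound is in hand, your Moufang inclusion applied to $(\overline{A},\overline{B})$ supplies the lower bound.
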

	\begin{proof}
		First we consider, $N(A,B)=0$. Since $< , >$ is non-degenerate, $\dim V+ \dim V^\perp =8$. We claim that $V^\perp=A\mathcal O \cap B\mathcal O$.
		Let $z\in V^\perp$. For all $x,y\in \mathcal O$, 
		\begin{align*}
			\langle z,A(xy)-B(yx)\rangle  = 0
			\iff \langle \overline{x}(\overline{A}z)-(\overline{B}z)\overline{x},y\rangle  = 0
		\end{align*}
		Since $< , >$ is non-degenerate, $\overline{x}(\overline{A}z)-(\overline{B}z)\overline{x}=0$ for all $x\in \mathcal O$. If $x=1$, $\overline{A}z=\overline{B}z:=c\in \mathcal O$. Therefore, for all $x\in \mathcal O$, $\overline{x}c=c\overline{x}$. Thus, $c\in F 1$. Since $N(A)=0$, we have $c=0$. Therefore, $z\in \operatorname{ker}(L_{\overline{A}})\cap \operatorname{ker}(L_{\overline{B}})$. Conversely, if $z\in \operatorname{ker}(L_{\overline{A}})\cap \operatorname{ker}(L_{\overline{B}})$
		then $\overline{A}z=\overline{B}z=0$, and the preceding calculation shows that $\langle z,A(xy)-B(yx)\rangle=0$
		for all $x,y\in\mathcal O$. Thus $z\in V^\perp$, and consequently
		$$
		V^\perp
		=
		\ker L_{\overline{A}}\cap
		\ker L_{\overline{B}}.
		$$
		Since $A$ and $B$ are isotropic elements, by Proposition~\ref{Image_of_L_A}, $A\mathcal O=\operatorname{ker}(L_{\overline{A}})$ and $B\mathcal O=\operatorname{ker}(L_{\overline{B}})$ (see also \cite[Theorem~3.6(a)]{MR4812310}) as $\dim\operatorname{A\mathcal O}=4$ and $\dim\operatorname{Im}(L_{\overline{A}})=4$, and same holds for $B$. Thus, $V^\perp = A\mathcal O\cap B \mathcal O$. By \cite[Theorem 3.6]{MR4812310}, if $A$ and $B$ are linearly independent, then $\dim V^\perp= 2$. If $A$ and $B$ are linearly dependent, then $V^\perp=A\mathcal O$, which is a dimension $4$ subspace of $\mathcal O$. Thus, we get $\dim V=6$ if $A$ and $B$ are linearly independent and $\dim V=4$ if $A$ and $B$ are linearly dependent.
		
		If $N(A,B)\neq 0$, then $\dim V=8$ follows from Lemma~\ref{lem: N(A-B) nonzero}.
	\end{proof}
	
	We are now ready to prove our main result.
	
	\begin{proof}[Proof of Theorem~\ref{thm:main}]
		Since $A\neq0$, either at least one of $A$ and $B$ is invertible, or $A$ is a nonzero zero divisor and $N(B)=0$.
		
		Assume first that $A$ or $B$ is invertible. If $A=B$, then $\operatorname{Im}(\Phi)=A\mathcal O_0=A^\perp$, by Lemma~\ref{lem: A=B image} and Proposition~\ref{prop: A=B invertible case}. This is a hyperplane of $\mathcal O$, so $\Phi$ is not surjective, and we obtain (i). If $A\neq B$, then $\Phi$ is surjective. Indeed, since $\Phi_{A,B}(x,y)=-\Phi_{B,A}(y,x)$, we may assume that $A$ is invertible. If $\mathcal O$ is a division algebra, then $A-B$ is invertible, and $\Phi$ is surjective by Lemma~\ref{lem: N(A-B) nonzero}. If $\mathcal O$ is split, then $\Phi$ is surjective by Lemma~\ref{lem: B is zero} if $B=0$, and by Theorem~\ref{thm: split octonion surjectivity-invertible} if $B\neq0$.
		
		Assume now that $A$ is a nonzero zero divisor and $N(B)=0$. Suppose first that $B=\alpha A$, with $\alpha\in F$. If $\alpha=1$, then $\operatorname{Im}(\Phi)=A\mathcal O_0=A\mathcal O$, by Lemma~\ref{lem: A=B image} and Proposition~\ref{prop: zero-divisor commutator}. If $\alpha\neq1$, then $(A-B)\mathcal O=A\mathcal O=A\mathcal O+B\mathcal O$, and $\operatorname{Im}(\Phi)=A\mathcal O$ by Lemma~\ref{lem:sandwich}. By Proposition~\ref{Image_of_L_A}, $A\mathcal O$ is a totally isotropic subspace of dimension $4$, which is maximal, since $N$ is nondegenerate and $\dim\mathcal O=8$. In particular, $\Phi$ is not surjective, and we obtain (ii). Suppose now that $A$ and $B$ are linearly independent. If $N(A,B)\neq0$, then $N(A-B)=N(A)+N(B)-N(A,B)=-N(A,B)\neq0$, and $\Phi$ is surjective by Lemma~\ref{lem: N(A-B) nonzero}. If $N(A,B)=0$, then the linear span of $\operatorname{Im}(\Phi)$ has dimension $6$, by Proposition~\ref{prop:both zero divisors}, so $\Phi$ is not surjective, and we obtain (iii).
		
		Since $N(A,B)=2\alpha N(A)=0$ when $B=\alpha A$ and $N(A)=0$, the cases in which $\Phi$ is surjective are exactly those in (a) and (b).
	\end{proof}
	
	\section{The quaternion case}
	\label{sec:quaternion}
	
	In this section we determine the image of the analogous map on a quaternion algebra. For $Q=M_2(K)$, with $K$ an algebraically closed field, Panja, Saini and Singh proved that the image is always a vector space \cite[Theorem~B]{PanjaSainiSingh2025}. Theorem~\ref{thm:quaternion} below recovers this for every field $F$ with $\operatorname{char}F\neq2$, covers division quaternion algebras as well, and identifies the image in each case.
	
	Let $Q$ be a quaternion algebra over $F$ and $Q_0=\{x\in Q\,|\,Tr(x)=0\}$. For $A,B\in Q$ we consider $\Phi^Q_{A,B}(x,y)=Axy-Byx$ and, for $x\in Q$, the linear map $\psi^Q_x(y)=\Phi^Q_{A,B}(x,y)$. As in Lemma~\ref{lem:sandwich},
	\begin{equation}
		(A-B)Q\ \subseteq\ \operatorname{Im}(\Phi^Q_{A,B})\ \subseteq\ AQ+BQ, \label{eq:sandwich-Q}
	\end{equation}
	and, as before, we may replace $(A,B)$ by $(\sigma(A),\sigma(B))$, for any automorphism $\sigma$ of $Q$.
	
	If $Q$ is split, we identify $Q=M_2(F)$, so that $N=\det$ and $Tr=\operatorname{tr}$. The nonzero zero divisors of $Q$ are the matrices of rank one, and for such a matrix $A$, regarded as a linear map of $F^2$, we have
	\begin{equation}
		AQ=\{X\in M_2(F)\,|\,XF^2\subseteq AF^2\}. \label{eq:rank-one-ideals}
	\end{equation}
	Hence, if $A$ and $B$ are nonzero zero divisors, then either $AQ=BQ$, or $AQ\cap BQ=0$ and $AQ+BQ=Q$.
	
	\begin{theorem}
		\label{thm:quaternion}
		Let $Q$ be a quaternion algebra over a field $F$ with $\operatorname{char}F\neq2$, and let $A,B\in Q$ with $A\neq0$. Then $\operatorname{Im}(\Phi^Q_{A,B})$ is a vector subspace of $Q$. More precisely:
		\begin{enumerate}
			\item if $A$ or $B$ is invertible and $A\neq B$, then $\operatorname{Im}(\Phi^Q_{A,B})=Q$;
			\item if $A=B$ is invertible, then $\operatorname{Im}(\Phi^Q_{A,B})=AQ_0=A^\perp$;
			\item if $N(A)=N(B)=0$, then $\operatorname{Im}(\Phi^Q_{A,B})=AQ+BQ$.
		\end{enumerate}
	\end{theorem}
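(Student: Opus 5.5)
I will follow the structure of the octonion proof, using associativity of $Q$ to simplify each step. The three cases of the statement are exhaustive: since $A\neq0$, either one of $A,B$ is invertible, or $N(A)=N(B)=0$ (and in a division quaternion algebra the latter forces $B=0$ and $A=0$, so it only occurs for $Q$ split, or with $B=0$ excluded by $A\neq 0$ invertible).

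\emph{Case (1).} By the symmetry $\Phi^Q_{A,B}(x,y)=-\Phi^Q_{B,A}(y,x)$ we may assume $A$ is invertible. If $B=0$, then $\operatorname{Im}=AQ=Q$ exactly as in Lemma~\ref{lem: B is zero}. Otherwise put $C=A^{-1}B\neq1$. Then
$$\psi^Q_x(y)=A\bigl(xy-Cyx\bigr)=A\,T^C_x(y).$$
Lemma~\ref{lem:quaternion nonvanishing} gives $x_0\in Q$ with $g_C(x_0)=\det T^C_{x_0}\neq0$. Hence $\psi^Q_{x_0}$ is bijective and $\Phi^Q_{A,B}$ is surjective. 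The condition on $f_D$ is not needed here.

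\emph{Case (2).} Here $\Phi^Q_{A,A}(x,y)=A(xy-yx)$. By Corollary~\ref{cor: trace zero quaternion} together with $\operatorname{tr}(xy)=\operatorname{tr}(yx)$, the set of commutators is exactly $Q_0$, so the image is $AQ_0$. The argument of Proposition~\ref{prop: A=B invertible case} carries over verbatim: from $N(Ax,A)=N(A)\operatorname{Tr}(x)$ we get $AQ_0\subseteq A^\perp$, and since $L_A$ is injective both spaces are $3$-dimensional, so $AQ_0=A^\perp$.

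\emph{Case (3).} Now $Q=M_2(F)$ and $A,B$ have rank at most one, with $A$ of rank exactly one. The sandwich \eqref{eq:sandwich-Q} gives $\operatorname{Im}\subseteq AQ+BQ$. The dichotomy after \eqref{eq:rank-one-ideals} says that either $BQ\subseteq AQ$ (including $B=0$) or $AQ\cap BQ=0$ and $AQ+BQ=Q$.

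\begin{itemize}
\item \emph{$BQ\subseteq AQ$ and $A-B\neq0$.} Then $A-B$ has column space inside $AF^2$, which is $1$-dimensional, so $(A-B)Q=AQ$. The sandwich forces $\operatorname{Im}=AQ$.
\item \emph{$B=A$.} The image is $AQ_0$. As in Proposition~\ref{prop: zero-divisor commutator}, it suffices to find $k$ with $Ak=0$ and $\operatorname{tr}(k)=-2$, which shows $AQ_0=AQ$. Take $k=\overline{A}y$ with $N(y,A)=-2$; the same identities hold in $Q$.
\item \emph{$AQ+BQ=Q$.} If $A-B$ is invertible, Lemma~\ref{lem: N(A-B) nonzero}'s argument gives surjectivity. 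Otherwise we must work harder, and this is the main obstacle.
\end{itemize}

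In that last subcase, conjugate so that $A=E_{11}$. Since $A-B$ is singular and $B$ has rank one with column space independent of $e_1$, we get $B=e_2w^T$ for some $w$ with $\det(E_{11}-e_2w^T)=-w_1=0$. So $B=e_2(\beta e_2)^T$ or similar; the possibilities are $B=\beta E_{22}$ or $B=E_{21}+\beta E_{22}$ up to further conjugation fixing $E_{11}$.

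I would then look for a single $x_0$ such that $y\mapsto E_{11}x_0y-Byx_0$ is a bijective $4\times4$ linear map. Candidates to try first are $x_0=E_{12}+E_{21}$, $x_0=1+E_{12}$, or $x_0=E_{12}+E_{21}+E_{22}$, computing the determinant explicitly. Since $\operatorname{char}F\neq2$, one such choice should give a nonzero constant times a power of $\beta$ (or a constant), with a separate choice for $\beta=0$. If no single $x_0$ works for some parameter, the fallback is to show directly that each $Z\in Q$ is hit by combining $y=1$ (covering $(A-B)Q$) with a second explicit family in $x$ covering a complement.
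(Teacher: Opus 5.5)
\textbf{There is a genuine gap in the hard subcase of (3).} Your arguments for (1), (2), and the subcases $BQ\subseteq AQ$ of (3) are correct and essentially the paper's. In (1) you apply Lemma~\ref{lem:quaternion nonvanishing} also when $Q$ is a division algebra, which is legitimate; the paper instead uses that $A-B$ is then invertible.

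The gap is the subcase $AQ+BQ=Q$ with $\det(A-B)=0$. This is the only nontrivial part of (3), and you do not prove it: you list candidates for $x_0$ and say one ``should'' work. The reduction you sketch before that is also incorrect in three places.
\begin{enumerate}
\item Conjugation alone cannot bring a rank-one $A$ with $\operatorname{tr}(A)=0$ to $E_{11}$, since such an $A$ is nilpotent and hence similar to $E_{12}$. You would need the extra freedom $\Phi^Q_{PA,PB}=P\,\Phi^Q_{A,B}$ for invertible $P$, which you never invoke.
\item $\det(E_{11}-e_2w^T)=-w_2$, not $-w_1$. So singularity of $A-B$ forces $B=w_1E_{21}$. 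Your forms $\beta E_{22}$ and $E_{21}+\beta E_{22}$ with $\beta\neq0$ have $A-B$ invertible, so they do not belong to this subcase at all. Only $B=E_{21}$ (your second form with $\beta=0$) survives. Also, ``column space independent of $e_1$'' does not by itself give $B=e_2w^T$.
\item Your fallback is not a valid argument. Knowing that $\operatorname{Im}(\Phi^Q_{A,B})$ contains $(A-B)Q$ and a complementary subspace does not imply it contains their sum, because $\Phi^Q_{A,B}$ is not linear (compare Example~\ref{ex:x(Ay)}).
\end{enumerate}

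\textbf{The missing idea is the one the paper uses.} If $v\neq0$ and $Av=Bv$, then $Av=Bv\in AF^2\cap BF^2=0$ because $AQ\neq BQ$, so $\ker A=\ker B$. Conjugate so that $AF^2=Fe_1$ and $BF^2=Fe_2$, then conjugate once more by a suitable diagonal matrix. This gives $A=e_1c^T$ and $B=e_2c^T$ with $c=(c_1,c_2)^T\neq0$. The matrices of $\psi^Q_{I+E_{12}}$ and $\psi^Q_{I+E_{21}}$ then have determinants $c_1^4$ and $c_2^4$, so one of them is bijective.

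In your normalization $A=E_{11}$, $B=E_{21}$ (reached after also left-multiplying both coefficients by a common invertible), your candidate $x_0=1+E_{12}$ does work. For $y=\left(\begin{smallmatrix}u&v\\w&z\end{smallmatrix}\right)$,
\[
\psi^Q_{x_0}(y)=\begin{pmatrix}u+w&v+z\\-u&-u-v\end{pmatrix},
\]
which is visibly bijective. But the correct reduction to this normal form, and this computation, are exactly what the proof of (3) requires, and they are absent from your proposal.
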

	
	\begin{proof}
		First let $A=B$. Since $Tr(xy)=Tr(yx)$, Corollary~\ref{cor: trace zero quaternion} gives $\operatorname{Im}(\Phi^Q_{A,A})=AQ_0$. If $A$ is invertible, then $AQ_0=A^\perp$, as in the proof of Proposition~\ref{prop: A=B invertible case}. If $N(A)=0$, then $AQ_0=AQ$, as in the proof of Proposition~\ref{prop: zero-divisor commutator}. This proves (2), and (3) when $A=B$.
		
		So let $A\neq B$. If $Q$ is a division algebra, then $A-B$ is invertible, and $\Phi^Q_{A,B}$ is surjective by \eqref{eq:sandwich-Q}. Hence we may assume $Q=M_2(F)$.
		
		(1) Since $\Phi^Q_{A,B}(x,y)=-\Phi^Q_{B,A}(y,x)$, we may assume that $A$ is invertible. Let $C=A^{-1}B\neq1$. Then $\psi^Q_x=L_A\circ T^C_x$, and by Lemma~\ref{lem:quaternion nonvanishing} there exists $x_0\in Q$ with $\det(T^C_{x_0})\neq0$. Hence $\psi^Q_{x_0}$ is bijective.
		
		(3) If $BQ\subseteq AQ$ (for instance, if $B=0$), then $A-B$ is a nonzero element of $AQ$. So $N(A-B)=0$ (since $N(AX)=N(A)N(X)=0$) and $(A-B)Q\subseteq AQ$, and both have dimension $2$ by \eqref{eq:rank-one-ideals}. Hence $(A-B)Q=AQ=AQ+BQ$, and the result follows from \eqref{eq:sandwich-Q}.
		
		Otherwise, $AQ+BQ=Q$, and we show that $\Phi^Q_{A,B}$ is surjective. If $A-B$ is invertible, this follows from \eqref{eq:sandwich-Q}. So assume $\det(A-B)=0$, and let $v\in F^2$ be nonzero with $Av=Bv$. Since $AQ\neq BQ$, the lines $AF^2$ and $BF^2$ are distinct by \eqref{eq:rank-one-ideals}, so $Av=Bv\in AF^2\cap BF^2=0$. Hence $v\in\ker A\cap\ker B$, and since both kernels are lines, $\ker A=\ker B$. Now let $u$ and $w$ span $AF^2$ and $BF^2$, respectively. Then $\{u,w\}$ is a basis of $F^2$, and conjugating $A$ and $B$ by the matrix $P\in\GL_2(F)$ with $Pu=e_1$ and $Pw=e_2$, we may assume $AF^2=Fe_1$ and $BF^2=Fe_2$, that is,
		$$
		A=\begin{pmatrix}a_1&a_2\\0&0\end{pmatrix},\qquad B=\begin{pmatrix}0&0\\b_1&b_2\end{pmatrix}.
		$$
		Since conjugation preserves the equality $\ker A=\ker B$, we have $(b_1,b_2)=\lambda(a_1,a_2)$ for some $\lambda\in F^\times$. Conjugating by $\operatorname{diag}(1,\lambda^{-1})$, we obtain
		$$
		A=\begin{pmatrix}c_1&c_2\\0&0\end{pmatrix},\qquad B=\begin{pmatrix}0&0\\c_1&c_2\end{pmatrix},\qquad (c_1,c_2)\neq(0,0).
		$$
		With respect to the basis $(E_{11},E_{12},E_{21},E_{22})$ of $Q$, the matrices of $\psi^Q_{I+E_{12}}$ and $\psi^Q_{I+E_{21}}$ are
		$$
		\begin{pmatrix}
			c_1&0&c_1+c_2&0\\
			0&c_1&0&c_1+c_2\\
			-c_1&0&-c_2&0\\
			-c_1&-c_1&-c_2&-c_2
		\end{pmatrix}
		\qquad\text{and}\qquad
		\begin{pmatrix}
			c_1+c_2&0&c_2&0\\
			0&c_1+c_2&0&c_2\\
			-c_1&-c_1&-c_2&-c_2\\
			0&-c_1&0&-c_2
		\end{pmatrix},
		$$
		whose determinants are $c_1^4$ and $c_2^4$, respectively. So one of these maps is bijective.
	\end{proof}

	\section{Concluding remarks}\label{sec:concluding}
	
	Lemma~\ref{lem:trace-zero-commutator} is the octonion analogue of a classical result: every matrix of trace zero in $M_n(F)$ is a commutator, as proved by Shoda \cite{Shoda} in characteristic zero and by Albert and Muckenhoupt \cite{MR83961} in general. For central simple associative algebras, with the reduced trace in place of the trace, Amitsur and Rowen proved the same for division algebras of degree $2$ and $3$ \cite{MR1286824}, while Jang \cite{Jang2026} recently announced a division algebra of degree $4$ with an element of reduced trace zero which is not a commutator. Since every $x\in\mathcal O$ satisfies $x^2-Tr(x)x+N(x)1=0$, $Tr$ plays the role of the reduced trace in $\mathcal O$. By Kleinfeld's theorem \cite{Kleinfeld1953}, a central simple alternative algebra which is not associative is an octonion algebra, so Lemma~\ref{lem:trace-zero-commutator} gives the following.
	
	\begin{corollary}
		\label{cor:alternative shoda}
		Let $A$ be a central simple alternative algebra over a field $F$ with $\operatorname{char}F\neq2$. If $A$ is not associative, then every element of $A$ of trace zero is a commutator in $A$.
	\end{corollary}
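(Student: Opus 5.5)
The plan is to reduce the statement to Lemma~\ref{lem:trace-zero-commutator} via Kleinfeld's theorem. First I will make precise what ``trace'' means in an arbitrary central simple alternative algebra $A$ that is not associative. By Kleinfeld's theorem \cite{Kleinfeld1953}, such an $A$ is an octonion algebra over its center. Since $A$ is central, that center is $F$, so $A$ is an octonion algebra $\mathcal O$ over $F$.

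The trace on $A$ is then the one attached to its structure as a composition algebra, namely $Tr(x)=x+\overline{x}=N(x,1)$. Equivalently, it is the coefficient in the degree-two equation $x^2-Tr(x)x+N(x)1=0$, as remarked just before the statement. This equation determines $Tr(x)$ uniquely whenever $x\notin F1$. For $x=\alpha 1$ it gives $Tr(x)=2\alpha$, consistent with $N(x,1)$. So the trace is intrinsic to the algebra and does not depend on the chosen identification $A\cong\mathcal O$.

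With this in place, let $a\in A$ with $Tr(a)=0$. Transport $a$ through the isomorphism $A\cong\mathcal O$; this isomorphism preserves the trace, because trace is defined intrinsically. Then $a$ lies in $\mathcal O_0$. By Lemma~\ref{lem:trace-zero-commutator}, the image of $xy-yx$ on $\mathcal O$ is exactly $\mathcal O_0$, so $a=xy-yx$ for some $x,y$. Pulling $x,y$ back gives a commutator representation of $a$ in $A$.

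The only delicate point is the identification step. I must check that Kleinfeld's theorem yields an octonion algebra over $F$ itself rather than over some field extension. Centrality settles this: the center of an octonion algebra over $K$ is $K$, so $K=F$. I must also check that the hypothesis $\operatorname{char}F\neq 2$ matches the standing assumptions of Lemma~\ref{lem:trace-zero-commutator}, and it does. Everything else is a direct citation.
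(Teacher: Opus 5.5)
Your proposal is correct and follows essentially the same route as the paper: Kleinfeld's theorem identifies $A$ with an octonion algebra over its center, centrality makes that center $F$, and Lemma~\ref{lem:trace-zero-commutator} then gives the conclusion. Your extra check that the trace is intrinsic is a useful detail the paper only alludes to; the cleanest form is that $Tr$ is fixed by $x^2-Tr(x)x+N(x)1=0$ on $A\setminus F1$, and hence everywhere by linearity.
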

	
	In a central division algebra $D$, every element of reduced trace zero is a sum of two commutators \cite{MR1286824}, so the commutators span the subspace of elements of reduced trace zero. So the image of $\Phi_{1,1}(x,y)=xy-yx$ on $D$ is a vector subspace if and only if every element of reduced trace zero is a commutator. Hence, for the division algebra of \cite{Jang2026}, the image of $\Phi_{1,1}$ is not a vector subspace, in contrast with Lemma~\ref{lem: A=B image} and Theorem~\ref{thm:quaternion}.
	
	\begin{question}
		\label{q:CSA subspace}
		Let $R$ be a central simple algebra over $F$ of degree $n\geq3$. For which $A,B\in R$ is the image of the map $\Phi_{A,B}(x,y)=Axy-Byx$ a vector subspace of $R$? In particular, is it always a vector subspace when $R=M_n(F)$?
	\end{question}
	
	Our results also describe the image of $\Psi_{A,B}(x,y)=(xy)A-(yx)B$ on $\mathcal O$. Indeed, since the conjugation of $\mathcal O$ satisfies $\overline{ab}=\overline b\,\overline a$ and preserves the norm \cite[Chapter~1]{MR1763974}, we have $\overline{\Psi_{A,B}(x,y)}=\Phi_{\overline A,\overline B}(\overline y,\overline x)$, and so we obtain the following.
	
	\begin{corollary}
		\label{cor:right coefficients}
		For $A,B\in\mathcal O$, we have $\operatorname{Im}(\Psi_{A,B})=\overline{\operatorname{Im}(\Phi_{\overline A,\overline B})}$. In particular, $\Psi_{A,B}$ is surjective if $A$ or $B$ is invertible and $A\neq B$, $\operatorname{Im}(\Psi_{A,A})=\mathcal O_0A$, and $\operatorname{Im}(\Psi_{A,0})=\mathcal OA$.
	\end{corollary}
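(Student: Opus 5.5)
The plan is to reduce everything to the identity $\overline{\Psi_{A,B}(x,y)}=\Phi_{\overline A,\overline B}(\overline y,\overline x)$ and then read off each claim from results already proved for $\Phi$.

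\textbf{Step 1: the conjugation identity.} Since conjugation is an anti-automorphism, $\overline{ab}=\overline b\,\overline a$, and it is linear. Hence
$$\overline{(xy)A-(yx)B}=\overline A\,\overline{xy}-\overline B\,\overline{yx}=\overline A(\overline y\,\overline x)-\overline B(\overline x\,\overline y)=\Phi_{\overline A,\overline B}(\overline y,\overline x).$$
Conjugation is a bijection of $\mathcal O$, and $(x,y)\mapsto(\overline y,\overline x)$ is a bijection of $\mathcal O\times\mathcal O$. Therefore $\overline{\operatorname{Im}(\Psi_{A,B})}=\operatorname{Im}(\Phi_{\overline A,\overline B})$. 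Applying conjugation once more, using $\overline{\overline a}=a$, gives the first assertion.

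\textbf{Step 2: the surjective case.} Invertibility is preserved by conjugation, since $N(\overline A)=N(A)$. Also $A\neq B$ if and only if $\overline A\neq\overline B$. So if $A$ or $B$ is invertible and $A\neq B$, Theorem~\ref{thm:main}(a) shows that $\Phi_{\overline A,\overline B}$ is surjective. Conjugating the whole of $\mathcal O$ returns $\mathcal O$, so $\Psi_{A,B}$ is surjective.

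\textbf{Step 3: the case $B=A$.} The image of $\Phi_{\overline A,\overline A}$ is $\overline A\,\mathcal O_0$, by Lemma~\ref{lem:trace-zero-commutator} (as in Lemma~\ref{lem: A=B image}). Then
$$\operatorname{Im}(\Psi_{A,A})=\overline{\overline A\,\mathcal O_0}=\{\overline{\overline A z}\,:\,z\in\mathcal O_0\}=\{\overline z A\,:\,z\in\mathcal O_0\}=\mathcal O_0A.$$
The last equality holds because $\mathcal O_0$ is stable under conjugation: $\overline z=-z$ on trace-zero elements.

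\textbf{Step 4: the case $B=0$.} By Lemma~\ref{lem: B is zero}, $\operatorname{Im}(\Phi_{\overline A,0})=\overline A\,\mathcal O$. Conjugating gives $\mathcal O A$, since $\mathcal O$ is stable under conjugation.

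This only needs $A\neq0$ to invoke the earlier results, and $\overline A\neq0$ whenever $A\neq0$. There is no real obstacle. The only point to check carefully is the order reversal in Step 1, which makes $\overline x$ and $\overline y$ appear swapped; this is harmless because we range over all pairs.
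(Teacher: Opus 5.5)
Your proposal is correct and follows the paper's argument. The paper justifies the corollary only by the identity $\overline{\Psi_{A,B}(x,y)}=\Phi_{\overline A,\overline B}(\overline y,\overline x)$, which comes from $\overline{ab}=\overline b\,\overline a$ and $N(\overline a)=N(a)$. It then reads off the special cases from Theorem~\ref{thm:main}, Lemma~\ref{lem:trace-zero-commutator} and Lemma~\ref{lem: B is zero}, exactly as you do; you additionally make explicit why $\mathcal O_0$ and $\mathcal O$ are stable under conjugation.

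One small correction: Theorem~\ref{thm:main} assumes the first coefficient is nonzero. So the case $A=0$, $B$ invertible in Step~2 needs the symmetry $\Phi_{A,B}(x,y)=-\Phi_{B,A}(y,x)$, or simply the observation $\operatorname{Im}(\Psi_{0,B})=\mathcal O B=\mathcal O$. This contradicts your closing remark that only $A\neq0$ is ever needed.
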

	
	On the other hand, the image of a generalized polynomial of degree $2$ need not be a vector subspace, as the following examples show.
	
	\begin{example}
		\label{ex:x(Ay)}
		Let $I$ be the image of the map $(x,y)\mapsto x(p_0y)$ on the split octonion algebra $\mathcal O$. Since $N(x(p_0y))=N(x)N(p_0)N(y)=0$, every element of $I$ has norm zero. But $E_{12}=1(p_0E_{12})$ and $E_{21}=E_{21}(p_01)$ lie in $I$, while $N(E_{12}+E_{21})=\det(E_{12}+E_{21})=-1$. So $I$ is not a vector subspace. The same argument works on $M_2(F)$, where the image of $xE_{11}y$ is the set of matrices of rank at most one.
	\end{example}
	
	By a \emph{generalized polynomial} on $\mathcal O$ we mean a linear combination of nonassociative monomials in the variables $x_1,\ldots,x_m$ and in elements of $\mathcal O$. The examples above lead to the question of which multilinear generalized polynomials on an octonion algebra $\mathcal O$ is the image a vector subspace of $\mathcal O$? 
	
	\section*{Acknowledgements}
	The first-named author is supported by S\~ao Paulo Research Foundation (FAPESP), grants 2025/02457-5 and 2024/14914-9, and by CNPq, grant 405779/2023-2. The second-named author is supported by a Postdoctoral Fellowship from the Indian Institute of Science Education and Research (IISER) Mohali.
	\printbibliography
	
\end{document}